\newtheorem{theorem}{Theorem}
\newtheorem{proposition}[theorem]{Proposition}
\newtheorem{definition}[theorem]{Definition}
\newtheorem{corollary}[theorem]{Corollary}
\newtheorem{lemma}[theorem]{Lemma}
\newtheorem{remark}{Remark}
\def\qed{\hbox{${\vcenter{\vbox{		 %HOLLOW SQUARE
   \hrule height 0.4pt\hbox{\vrule width 0.4pt height 6pt
   \kern5pt\vrule width 0.4pt}\hrule height 0.4pt}}}$}}
\def\cC{\mathcal C}
\def\cE{\mathcal E}
\def\cF{\mathcal F}
\def\cH{\mathcal H}
\def\cS{\mathcal S}
\def\bD{\mathbb D}
\def\bE{\mathbb E}
\def\bN{\mathbb N}
\def\bP{\mathbb P}
\def\bR{\mathbb R}
\def\uh{\underline{h}}
\def\uu{\underline{u}}
\def\uz{\underline{z}}
\def\veps{\varepsilon}
\def\uno{\mathsf{1}}
\def\uu{\underline u}
\newcommand{\comment}[1]{$\null$}
\begin{document}

\title[Absolute continuity of the law for SPDEs with boundary noise]{Absolute continuity of the law
for solutions of stochastic differential equations with boundary noise}

\author%[S. Bonaccorsi, M. Zanella]
{Stefano Bonaccorsi}
\author{Margherita Zanella}

\address{Stefano Bonaccorsi \newline
Dipartimento di Matematica, Universit\`a di
    Trento, via Sommarive 14, 38123 Povo (Trento), Italia 
}
\email{stefano.bonaccorsi@unitn.it}
\address{Margherita Zanella\newline
Dipartimento di Matematica, Universit\`a di Pavia, via Ferrata 5, 27100 Pavia, Italia}
\email{margherita.zanella01@ateneopv.it}

%\thanks{Submitted Xxxx xx, xxxx. Published Xxxxx xx, xxxx.}
% \thanks{The research of G. Ziglio was supported by Project NeSt funded
%   by Provincia Autonoma di Trento (P.A.T.) within \emph{Bando unit\`a
%     di ricerca 2006}}

\subjclass[2000]{60H15, 60H07}
\keywords{Fractional Brownian motion, stochastic boundary condition, density of the solution.}

\begin{abstract}
We study existence and regularity of the density for the solution $u(t,x)$ (with fixed $t > 0$ and $x \in D$)
of the heat equation in a bounded domain $D \subset \bR^d$
driven by a stochastic inhomogeneous Neumann boundary condition with stochastic term. 
The stochastic perturbation is given by a fractional Brownian motion process. 
Under suitable regularity assumptions on the coefficients, by means of tools from the Malliavin calculus, 
we prove that the law of the solution has a smooth density with respect to the Lebesgue measure in $\bR$.
\end{abstract}

\maketitle

\def\uh{\underline{h}}
\def\uu{\underline{u}}
\def\uz{\underline{z}}

%=========================================================================

\section{Introduction}

Let $D$ be a bounded convex domain in $\bR^d$ with smooth $C^\infty$ boundary $\partial D$,
and denote $\nu$ the inward-pointing normal vector to $\partial D$.
In the domain $D$, let us consider a heat equation with inhomogeneous boundary conditions
of Neumann type 
\begin{align}
\label{e1}
\begin{cases}
\displaystyle \partial_t u(t,x) - \tfrac12 \Delta u(t,x) = 0,\qquad & t > 0,\ x \in D%\bR^d_+ = \{x \,:\, x_d > 0\},
\\
\langle \nu, \nabla u(t,\bar x) \rangle + \beta u(t,\bar x) + g(t,\bar x) = 0 \qquad & \bar{x} \in \partial D%\bar x = (\tilde x,0) \in \partial \bR^d_+,
\\
u(0,x) = 0 & x \in D%\bR_+^d.
\end{cases}
\end{align}
where $g$ is given and
$\beta$ is a positive constant. %{\color{red}{We assume $|\partial D|< \infty$.}}

Evolution problems of this kind have been studied by several authors; in particular,
if $g$ were a deterministic function then equation \eqref{e1} 
becomes a special case of the general theory of inhomogeneous boundary value problem, that is well studied in the literature, see for instance \cite{Lions1972}.
In this case, a solution for the heat problem is given by 
\begin{align}\label{e2}
u(t,x) = \int_0^t \int_{\partial D} p_N(s,x,\bar y) g(s,\bar y) \, {\rm d}s,
\end{align}
where $p_N(t, x, \bar{y})$ is the Poisson kernel of problem \eqref{e1} (for further details see Section \ref{studio_kernel}).

In our knowledge, the first extension to the case of stochastic boundary condition is due to \cite{Sowers1994} 
in case of an additive boundary white noise $g(t,\bar x) = \alpha(\bar x) \frac{{\rm d}}{{\rm d}t}B(t)$ with respect to a Brownian motion process $B$.
A semigroup approach to the same equation is given in \cite{DaPrato1992}; the wave equation with stochastic boundary values
was investigated in \cite{Mao1993}.
\\
Stochastic perturbation driven by fractional Gaussian processes have been studied recently by several authors, see
\cite{Duncan2002, Duncan2005, Maslowski2003, Bonaccorsi2011b} 
especially in the case of equations perturbed by distributed noise.
Less results are available for the case of equations perturbed by a boundary noise: see \cite{Duncan2002, Bonaccorsi2010a, Barbu2015a, Bonaccorsi2016}. In this last paper, authors allow also a nonlinear term on the boundary (both for the Brownian motion and the fractional Brownian motion).

Here, we aim to investigate further the properties of the solution
for the problem introduced in \cite{Barbu2014}.
We consider the case when the boundary condition are given by the sum of a {\em nonlinear} function of the solution
and a stochastic perturbation:
\begin{align*}
g(t,\bar x) = g(u(t,\bar x)) - \int_S \alpha(\sigma, \bar x) \, {\rm d} B(\sigma, t),
\end{align*}
where $B(t,\sigma)$ is  a fractional Brownian motion of Hurst parameter $H \in (\frac12,1)$ on $[0,T] \times S$
and we assume that $g: \bR \to \bR$ is a Lipschitz continuous function. 
\\
Thus we write the problem at hand in the form
\begin{equation}
\label{eq:nl}
\begin{cases}
\partial_t u(t,x) - \frac12 \Delta u(t,x) =0, \qquad & t > 0,\ x \in D,
\\ 
\langle \nu, \nabla u(t,\bar x) \rangle + \beta u(t,\bar x) + g(u(t,\bar x))= \frac{{\rm d}}{{\rm d}t} \int_S \alpha(\sigma, \bar x) \, {\rm d} B(\sigma, t), & \bar{x} \in \partial D,
\\ 
u(0,x) = 0 & x \in D.
\end{cases}
\end{equation}

First, we prove the existence, uniqueness and 
%H\"older 
continuity of the solution
$u(t,x)$ with respect to both parameters $t$ and $x$.
Further, we show, using tools of the Malliavin calculus,
that the random variable $u(t,x)$ has an absolutely continuous probability law and, under suitable assumptions,
the density of this random variable is smooth.

\smallskip

We interpret equation \eqref{eq:nl} in the sense of Walsh \cite{Walsh1986}: a solution of \eqref{eq:nl}
is the process $u(t,x)$ that satisfies the evolution equation
\begin{equation}
\label{eq:nl2}
u(t,x) = \int_0^t \int_{\partial D} p_N(t-s,x,\bar y) g(u(s,\bar y)) \, {\rm d}\bar y \, {\rm d}s
+ \int_0^t \int_S \int_{\partial D} p_N(t-s,x,\bar y) \alpha(\sigma, \bar y) \,  {\rm d}\bar y \,{\rm d}B(\sigma,s).
\end{equation}
%Our main result provides the existence and regularity of the density (with respect to Lebesgue measure) of the random variable
%$u(t,x)$ with $t \in ]0,T]$ and $x \in D$. Our approach is rather standard in this context, as we rely on techniques
%of the Malliavin calculus and a-priori estimates for the solution.

In order to give a sense to the random field formulation for the solution we have to evaluate the process in both time and space. It is then necessary to prove that there exists a continuous version (in both the entries) of the process. As regards the continuity in time, it is a classical result and it follows immediately from \eqref{eq:nl2}. As regards the space regularity, in the following we prove that we can consider a continuous version of the solution process \textit{inside} the domain $D$; unfortunately this kind of regularity can not be extended to the boundary $\partial D$ as well, where we have only a regularity of $L^p(\partial D)$ type.
Our main aim is to prove the existence of a smooth density for the solution at time $t$ and space $x\in D$ fixed. In order to obtain this kind of result we need, as a necessary step, to study the solution process, as well as its Malliavin derivative, on the boundary. We point out here that, since we do not have the continuity in space on the boundary, all the results stated for $u(t, \xi)$ with $\xi \in \partial D$ (and for its Malliavin derivative) are valid \textit{a.e.} and, with an abuse of notation, by $u(t,\xi)$ we mean a representative element in the equivalence class $L^p(\partial D)$. 
%All the results stated for $u(t, \xi)$ and its Malliavin derivative are then valid \textit{a.e.} in time and space. 
Nevertheless, as regards the analogous results stated for the points inside the domain, they are valid everywhere thanks to the representation formula for the solution and its Malliavin derivative (see \eqref{sol_int} and \eqref{der_sol_int}).

Let us state the main assumptions that we impose on the coefficients of the problem.

\begin{description}
\item[(g1)] {\em Lipschitz continuity.} We assume that $g \in C^1(\bR)$ is Lipschitz continuous, with
$|\partial_ug(u)|\le L$ and $g(u) \le L(1+|u|)$ for some constant $L > 0$.
\item[(g2)] {\em Smoothness.} Further to {\bf (g1)}, in order to prove the smoothness of the law of the solution $u(t,x)$
we shall require that $g \in C^\infty(\bR)$ and 
derivatives of all orders are bounded by a (finite) constant $L$:
%$g$ is Lipschitz continuous, with Lipschitz constant $L$. {\color{red}{Moreover 
$\|\partial_u^ng(u)\|\le L$ for every $n \ge 1$.%}}
\item[(a1)] {\em Integrability of the boundary coefficient.} We  assume that $\alpha \in L^2(S \times \partial D)$.
\item[(a1')] {\em Further integrability of the boundary coefficient.} In addition to {\bf (a1)},
for some results we will require a stronger integrability condition of the boundary coefficient, namely that 
$\alpha \in L^{\theta}(\partial D; L^2(S))$ for $\theta >\frac{d-1}{2H-1}$.
\item[(a2)] {\em Non-degeneracy.} There exists $\alpha_0>0$ such that $\alpha(\sigma, x) \ge \alpha_0$.
\end{description}

This paper is in principle an extension of the results in \cite{Barbu2014, Barbu2015a, Bonaccorsi2016}.
Section \ref{studio_kernel} contains some preliminaries, basic definitions and construction of the
heat kernel in a bounded domain. Although these results should be classical, we have find it useful
to collect them in a unique place; moreover, we did not find a reference for some of the estimates we
need here. 
\\
Section \ref{sec2} contains a review of some basic facts about Malliavin calculus for fractional Gaussian processes
(our approach here is based mainly on \cite{Nualart2006}). 
\\
Section \ref{scp} is devoted to the analysis of the stochastic convolution process: we first study the regularity of the process and then we compute its Malliavin derivative. We also prove that its law admits a density with respect to the Lebesgue measure. 
\\
In Section \ref{Eos} we recall the results concerning the existence and regularity of the solution of \eqref{eq:nl} proved in \cite{Barbu2014} and we study the Malliavin derivative of the solution. Finally in Sections \ref{Edsnhe} 
%and \ref{sdsnhe}
 we study the regularity of the solution in the Malliavin sense.

\smallskip

{\small Notation. In the sequel, we shall indicate with $C$ a constant that may vary from line to line. 
In certain cases, we write $C_{\alpha,\beta,\dots}$ to 
emphasize the dependence of the constant on the parameters $\alpha, \beta, \dots$.}

%Stochastic heat equation in a bounded domain in one spatial dimension, with homogeneous boundary conditions, has been studied
%by many authors: see, e.g., Walsh \cite{Walsh1986} and the problem of existence and regularity of the density for
%equation with additive or multiplicative noise has been addressed by many authors: we
%quote in particular Pardoux and Zhang \cite{Pardoux1993}, Bally and Pardoux \cite{Bally1998}, Morien \cite{Morien1999} and
%Mueller and Nualart \cite{Mueller2008}.
%For the multidimensional case, the literature is, in our knowledge, much scarcer: 
%we quote, for instance,  M\'arquez-Carreras {\it et al.} \cite{Marquez-Carreras2001} and the recent 
%paper by Marinelli {\it et al.} \cite{Marinelli2013}. Finally, as far as we know this is the first attempt to study a problem of existence
%and regularity of the density for solutions of
%stochastic evolution equations with boundary noise.

%************ POISSON KERNEL ESTIMATES

\section{Some useful estimates on the Poisson kernel}
\label{studio_kernel}

In this section we review the construction of the fundamental solution of the heat equation with inhomogeneous boundary
conditions and we state the relevant estimates that we need in the sequel. There is a large literature concerning this
subject: we shall refer, for instance, to \cite[Chapter 5]{Friedman1964}.
\\
Let $D \subset \bR^d$ be a bounded domain, with smooth boundary $\partial D$. We denote $\sigma({\rm d}y)$ the 
surface measure on $\partial D$ and we assume that $|\partial D| < \infty$.
\\
Let us consider the problem 
\begin{equation}
\label{det_heat_eq}
\begin{cases}
\frac{\partial }{\partial t}u(t,x)- \frac 12 \Delta u(t,x)=0 \,  \qquad & t \in (0,T), \quad x \in \mathring{D}
\\
u(0,x) = 0 & x \in D
\\
\tfrac{\partial}{\partial \nu}u(t,\xi) +  \beta u(t,\xi) = g(t, \xi) & t \in (0,T), \quad \xi \in \partial D
\end{cases}
\end{equation}
Set $L=\frac 12 \Delta- \frac{\partial}{\partial t}$. 
We extend the coefficients of $L$ to a larger set $\Omega_0:=D_0\times \left[0,T\right]$, where $D_0$ is a bounded domain 
containing $\bar{D}$ (notice that the extension is trivial since the coefficients are constant).
The fundamental solution $\Gamma(t, x, s,y)$ can be constructed in $\Omega_0$. Let us recall that

\begin{definition}
A fundamental solution of $Lu=0$ in $\Omega_0$ is a function $\Gamma$ defined for all $(t,x)\in \Omega_0$, $(s,y) \in \Omega_0$,
$t>s$, which satisfies the following conditions:
\begin{enumerate}
\item for fixed $(s,y)\in \Omega_0$, it satisfies the equation %, as a function of $(t,x) \rightarrow \Gamma(t,x,s,y)$ solves 
$Lu=0$; %in $\left[0,T\right]\times D_0$, \\
\item for any continuous function $f(x) \in C(\bar{D_0})$, if $x \in D_0$ then
$$\lim_{t \downarrow s}\int_{D_0}\Gamma(t,x,s,y)f(y){\rm d}y =f(x).$$% \qquad \forall x \in D_0.$$
\end{enumerate}
\end{definition}

According to
\cite[Chapter 5.1]{Friedman1964}, the fundamental solution of the homogeneous problem associated to \eqref{det_heat_eq} 
is  given by a function $Z(t,x,s,y)$, that is the heat kernel for the $d$-dimensional Brownian motion associated 
with the diffusion operator $L$, so it can be written as
\begin{equation}
Z(t,x,s,y)=(2\pi(t-s))^{-\frac d2}\exp{\left(-\frac{|x-y|^2}{2t}\right)}.
\end{equation}
According to this representation, the following inequalities hold for $x,y \in D_0$ and $0\le s<t\le T$:
\begin{align}
\label{stima_kernel_hom}
|\Gamma(t,x,s,y) | &\le c (t-s)^{-\mu}|x-y|^{2 \mu-d}, \quad 0<\mu<1;
%\end{equation}
\\%
%\begin{equation}
\label{stima_der_kernel_hom}
|\nabla_x\Gamma(t,x,s,y) | &\le c (t-s)^{-\mu}|x-y|^{2 \mu-d-1}, \quad \frac12<\mu<1.
\end{align}

Let us now return to problem \eqref{det_heat_eq}. 
The following result (see \cite[Theorem 5.3.2]{Friedman1964}) provides an explicit formula 
for the associated kernel and a first useful estimate.
%We have 

\begin{theorem}
\label{stima}
If $g$ is a continuous function on $\partial D \times \left[0,T\right]$, then there exists a unique solution of the problem \eqref{det_heat_eq}, and it has the form
\begin{equation}
u(t,x)= \int_0^t \int_{\partial D}p_N(t,x,s,y)g(s,y)\sigma({\rm d}y) {\rm d}s,
\end{equation}
where 
\begin{equation}
\label{kernel_expres}
p_N(t,x,s,y)=-2\Gamma(t,x,s,y)-2 \sum_{n=1}^{\infty}\int_s^t\int_{\partial D}\Gamma(t,x,s,y)M_n(r,z,s,y)\sigma({\rm d}z){\rm d}r
\end{equation}
and the terms $M_n$ are defined recursively as follows
\begin{align*}
M_1(t,x,s,y)& := M(t,x,s,y) = \left[\frac{\partial \Gamma}{\partial \nu(x)}+ \beta\Gamma\right](t,x,s,y);
\\
M_{n+1}& := \int_0^t\int_{\partial D}M(t,x,r,z)M_n(r,z,s,y) \sigma({\rm d}y) {\rm d}s.
\end{align*}
\end{theorem}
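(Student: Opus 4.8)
The statement to prove is Theorem \ref{stima}, which asserts existence, uniqueness, and the explicit kernel representation \eqref{kernel_expres} for the solution of the inhomogeneous Neumann problem \eqref{det_heat_eq}. My plan is to follow the classical parametrix method for constructing Poisson kernels for parabolic boundary value problems, as in \cite[Chapter 5]{Friedman1964}, and I will indicate the points requiring care.

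First I would look for a solution in the form of a single-layer-type potential built from the free fundamental solution $\Gamma$. Since $\Gamma(t,x,s,y)$ already solves $Lu=0$ in the interior for fixed $(s,y)$, the ansatz $u(t,x) = -2\int_0^t\int_{\partial D}\Gamma(t,x,s,y)\phi(s,y)\,\sigma(\mathrm dy)\,\mathrm ds$ automatically satisfies the PDE and the zero initial condition; the unknown density $\phi$ on $\partial D\times[0,T]$ must then be chosen so that the boundary condition $\tfrac{\partial}{\partial\nu}u + \beta u = g$ holds. Computing $\tfrac{\partial u}{\partial\nu} + \beta u$ as $x\to\xi\in\partial D$ and using the standard jump relation for the normal derivative of a single layer potential (this is where the convexity and $C^\infty$ smoothness of $\partial D$ enter, to control the kernel $M(t,\xi,s,y)=\bigl[\tfrac{\partial\Gamma}{\partial\nu(x)}+\beta\Gamma\bigr]$ and its weak singularity), one obtains a Volterra integral equation of the second kind for $\phi$,
\begin{equation*}
\phi(t,\xi) = g(t,\xi) + \int_0^t\int_{\partial D} M(t,\xi,s,y)\,\phi(s,y)\,\sigma(\mathrm dy)\,\mathrm ds.
\end{equation*}
Because the time integration produces a genuine Volterra (causal) structure and $M$ has an integrable singularity of order strictly less than the critical one (using \eqref{stima_der_kernel_hom} with $d+1-2\mu < d$, combined with the surface measure being finite and $\partial D$ being $(d-1)$-dimensional), the iterated kernels $M_n$ defined in the statement have norms summable in $n$, so the Neumann series $\phi = g + \sum_{n\ge1} M_n * g$ converges and gives the unique continuous solution of the integral equation. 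Substituting back and collecting terms yields exactly the kernel formula \eqref{kernel_expres}, and uniqueness of $u$ follows from uniqueness of $\phi$ together with the maximum principle (or energy estimates) for the homogeneous problem.

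The steps in order: (i) verify the potential ansatz solves the interior equation and initial condition by differentiating under the integral, justified by the bounds \eqref{stima_kernel_hom}–\eqref{stima_der_kernel_hom}; (ii) derive the boundary integral equation via the jump relation for the conormal derivative of the single layer, checking that the relevant limit of $\tfrac{\partial}{\partial\nu(x)}\Gamma(t,x,s,y)$ as $x\to\partial D$ produces the kernel $M$ plus the expected jump term; (iii) estimate the iterated kernels $M_n$ and prove convergence of the Neumann series in an appropriate space of (Hölder- or bounded-)continuous functions on $\partial D\times[0,T]$; (iv) assemble the explicit formula for $p_N$ and confirm the resulting $u$ is the claimed solution; (v) prove uniqueness. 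The main obstacle is step (ii)–(iii): one must handle the singularity of the conormal derivative of $\Gamma$ on the boundary carefully — the naive estimate on $\nabla_x\Gamma$ is too singular to be locally integrable on $\partial D$ against $\sigma(\mathrm dy)$, and it is precisely the convexity of $D$ (so that $\langle\nu(x), x-y\rangle$ has a favorable sign and an extra vanishing factor $|x-y|$ can be extracted near the diagonal) that reduces the effective singularity to an integrable one, making both the jump relation and the summability of the $M_n$ work. Everything else is a routine, if lengthy, verification along the lines of \cite[Theorem 5.3.2]{Friedman1964}.
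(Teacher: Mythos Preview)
Your outline is correct and is precisely the classical parametrix/single-layer argument of \cite[Theorem 5.3.2]{Friedman1964}; note that the paper does not actually prove Theorem \ref{stima} but simply quotes this result, so there is nothing further to compare. One small correction to your step (ii)--(iii): the extra factor $|x-y|$ that tames the conormal derivative of $\Gamma$ on the diagonal comes from the $C^\infty$ smoothness of $\partial D$ (so that $|\cos(\nu(x),\vec{xy})|\le C|x-y|$, as in Lemma \ref{stim_kernel_bordo}), not from convexity---convexity is used in the paper only for the lower bound on the kernel in Lemma \ref{lower}.
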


We shall not provide the proof of the theorem; however, we concentrate on the following consequence of the construction.
Some technical lemmas, that are needed in order to get the proof, are given in the Appendix.

\begin{corollary}
\label{co:stima}
For every $(t,x) \in \left[0,T\right] \times \bar{D}$, the following estimate holds
\begin{equation}
\label{stima_ok}
p_N(t,x,s,y) \le c(t-s)^{-\mu}|x-y|^{2\mu-d}, \qquad {\frac12<\mu<1}.
\end{equation}
\end{corollary}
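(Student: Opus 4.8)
The plan is to read the estimate off the series representation \eqref{kernel_expres} term by term. Write $p_N=-2\Gamma+\cR$, where $\cR$ collects the series in \eqref{kernel_expres}, namely
\[
\cR(t,x,s,y)=-2\sum_{n\ge 1}\int_s^t\!\!\int_{\partial D}\Gamma(t,x,r,z)\,M_n(r,z,s,y)\,\sigma(\mathrm dz)\,\mathrm dr .
\]
The first term $-2\Gamma(t,x,s,y)$ is immediately controlled by \eqref{stima_kernel_hom}, so everything reduces to bounding $\cR$ by the same quantity. For that it is enough to produce a summable bound for the iterated kernels $M_n$ on $[s,t]\times\partial D\times\partial D$ and then to carry out one last space--time convolution against $\Gamma$.

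The heart of the matter is the base estimate for $M=M_1$. Recalling that here $\Gamma$ coincides with the Gaussian kernel $Z$, one has $\nabla_x\Gamma=-\tfrac{x-y}{t-s}\,\Gamma$, hence on the boundary
\[
\Big|\tfrac{\partial\Gamma}{\partial\nu(x)}(t,x,s,y)\Big|=\frac{\big|\scal{\nu(x)}{x-y}\big|}{t-s}\,\Gamma(t,x,s,y),
\]
and this is where the geometry enters: since $\partial D$ is smooth, for $x\in\partial D$ and $y\in\bar D$ one has the quadratic bound $\big|\scal{\nu(x)}{x-y}\big|\le C\abs{x-y}^2$ (locally the boundary is a graph over its tangent plane). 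The extra factor $\abs{x-y}^2$ absorbs the extra $(t-s)^{-1}$ through the elementary inequality $\lambda^a\ee^{-\lambda}\le C_a$ ($\lambda\ge0$, $a>0$), and together with the harmless term $\beta\Gamma$ (bounded by \eqref{stima_kernel_hom}) one obtains
\[
\abs{M(t,x,s,y)}\le c\,(t-s)^{-\mu}\abs{x-y}^{2\mu-d},\qquad \tfrac12<\mu<1,
\]
i.e.\ the \emph{same} order of singularity as $\Gamma$, not one power worse. On the $(d-1)$–dimensional manifold $\partial D$ the exponent $2\mu-d$ exceeds $1-d$ precisely when $\mu>\tfrac12$, so $M$ is weakly singular there; without the quadratic gain above it would not be and the iteration could not close, which is why smoothness of $\partial D$ is essential and the range $\tfrac12<\mu<1$ is the natural one. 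I expect establishing this base estimate and then propagating it cleanly through the iteration to be the bulk of the work; the quadratic boundary estimate itself is presumably among the technical lemmas deferred to the Appendix.

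The remainder is the classical Levi/Volterra iteration. Using the time convolution identity $\int_s^t(t-r)^{-a}(r-s)^{-b}\,\mathrm dr=B(1-a,1-b)\,(t-s)^{1-a-b}$ for $a,b<1$, and the boundary convolution estimate $\int_{\partial D}\abs{x-z}^{-a}\abs{z-y}^{-b}\,\sigma(\mathrm dz)\le c\,\abs{x-y}^{(d-1)-a-b}$ for $a,b<d-1$ (the right-hand side being replaced by a constant when $a+b<d-1$), an induction on $n$ gives $\abs{M_n(t,x,s,y)}\le \dfrac{(cB_0)^n}{\Gamma(n\delta)}\,(t-s)^{-\mu_n}\abs{x-y}^{2\mu_n-d}$ with exponents remaining in the admissible range and some $\delta>0$; the factor $\Gamma(n\delta)^{-1}$ forces $R:=\sum_{n\ge1}M_n$ to converge absolutely, with $\abs{R(r,z,s,y)}\le c\,(r-s)^{-\mu'}\abs{z-y}^{2\mu'-d}$ for some admissible $\mu'$. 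Plugging $R$ into $\cR$, applying \eqref{stima_kernel_hom} to $\Gamma(t,x,r,z)$ (legitimate for $x\in\bar D$, $z\in\partial D$, since \eqref{stima_kernel_hom} holds on all of $D_0$), and invoking once more the two convolution estimates yields $\abs{\cR(t,x,s,y)}\le c\,(t-s)^{-\mu}\abs{x-y}^{2\mu-d}$. Adding the bound on the first term gives \eqref{stima_ok}; since $D$ is bounded, any improvement of the $\abs{x-y}$–power occurring along the way is harmless.
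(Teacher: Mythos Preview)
Your proposal is correct and follows essentially the same route as the paper: both bound $M_1$ via the boundary-geometry estimate $|\langle\nu(x),x-y\rangle|\le C|x-y|^2$ (this is precisely the content of Lemma~\ref{stim_kernel_bordo}), then iterate using the spatial convolution Lemma~\ref{integral} and the Beta-function time convolution to produce bounds on $M_n$ with a $1/\Gamma(n\delta)$-type prefactor, and finally feed the summed series back through one more convolution with $\Gamma$. The paper carries out the induction with explicit exponents $(t-s)^{k(1-\mu)-1}|x-y|^{k(2\mu-1)+1-d}$ rather than your abstract $\mu_n$, but the argument is the same.
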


\begin{proof}
We start from \eqref{stima_kernel_hom}; using Lemma \ref{stim_kernel_bordo} we have
\begin{align*}
|M(t,x,s,y)| \le c(t-s)^{-\mu}|x-y|^{2 \mu-d}.
\end{align*}
So, proceeding by induction, and using Lemma  \ref{integral}, we can give an explicit estimate for all the $M_n$'s. 
For $M_1$ we have the estimate above. Let us explicitly show the case $M_2$.
\begin{align*}
|M_2(t,x,s,y)| &\le c^2\int_s^t\int_{\partial D}(t-r)^{-\mu}|x-z|^{2 \mu-d}(r-s)^{-\mu}|z-y|^{2 \mu-d} \sigma({\rm d} z) {\rm d}r\\
&= c^2\left(\int_s^t(t-r)^{-\mu}(r-s)^{- \mu}{\rm d}r\right)|x-y|^{4 \mu-d-1}\\
&\le c^2 \frac{\Gamma(1-\mu)\Gamma(1-\mu)}{\Gamma(2-2\mu)}(t-s)^{1-2 \mu}|x-y|^{4\mu-d-1}
\end{align*}
Then the induction step allows us to conclude
$$|M_k(t,x,s,y)| \le \gamma_k(\mu)c^k(t-s)^{k(1-\mu)-1}|x-y|^{2k\mu-d-k+1}$$
where
$$\gamma_k(\mu)=\prod_{j=1}^kC_j(\mu)$$
with
$$C_j(\mu)=\frac{\Gamma(1-\mu)\Gamma(j(1-\mu)}{\Gamma((j+1)(1-\mu))}.$$
It follows immediately that
\begin{equation}
|M_k(t,x,s,y)| \le c^k\frac{(\Gamma(1-\mu))^{k+1}}{\Gamma((k+1)(1-\mu))}(t-s)^{k(1-\mu)-1}|x-y|^{k(2\mu-1)+1-d}.
\end{equation}

Using previous estimate on the $M_k$'s and \eqref{stima_kernel_hom}, we can now compute the desired estimate of the kernel, 
starting from expression \eqref{kernel_expres}.
Let us start by estimating an arbitrary term of the series in \eqref{kernel_expres} (we need again Lemma \ref{integral}):
\begin{align*}
\int_s^t\int_{\partial D} & \Gamma(t,x,s,y) M_n(r,z,s,y)\sigma({\rm d}z){\rm d}r\\
&\le c^k\frac{(\Gamma(1-\mu))^{k+1}}{\Gamma((k+1)(1-\mu))}\int_s^t\int_{\partial D} (t-r)^{-\mu}|x-y|^{2 \mu-d}(r-s)^{k(1-\mu)-1}|y-z|^{k(2\mu-1)+1-d}\sigma({\rm d}z){\rm d}r\\
&\le c^k\frac{(\Gamma(1-\mu))^{k+1}}{\Gamma((k+1)(1-\mu))} \frac{\Gamma(1-\mu)\Gamma(k(1-\mu))}{\Gamma((k+1)(1-\mu))} (t-s)^{k (1-\mu)-\mu} C |x-y|^{k(2\mu-1)+2\mu-d}
\end{align*}
and so we get
\begin{multline*}
|p_N(t,x,sy)| \le c(t-s)^{-\mu}|x-y|^{2 \mu-d}
\\
+ \left(c \sum_{k=1}^{\infty}\frac{c^{k+1}(\Gamma(1-\mu))^{k+1}}{\Gamma((k+1)(1-\mu))}(t-s)^{k(1-\mu)}|x-y|^{k(2 \mu-1)}\right)|x-y|^{2 \mu-d} (t-s)^{-\mu},
\end{multline*}
where the series is convergent provided $\frac12<\mu <1$. So we recover the desired estimate
\begin{equation}
p_N(t,x,s,y) \le c(t-s)^{-\mu}|x-y|^{2\mu-d}.%, \qquad {\color{red}{\frac12<\mu<1}}.
\end{equation}
\end{proof}

\begin{remark}
Notice that the estimate made on the kernel $p_N(t,x,s,y)$ is valid also when $ x \in \partial D$ thanks to Lemma  \ref{stim_kernel_bordo} and the fact that formula \eqref{stima_kernel_hom} is true for all $x \in D_0 \supset \bar{D}$.
\end{remark}

%\framebox{
%\begin{minipage}{\textwidth}

%We state separately some useful estimates about the kernel and its spatial derivative.
%Although we don't claim any novelty in such results, it seems there are no standard references to quote, at least for this form of the estimates.

%\begin{lemma}
%We have the following estimates on the Poisson kernel $p_N(t,x,\bar y)$ %on the boundary $\partial D$:
%\begin{equation}
%\label{stima_kernel_2}
%p_N(t,x,y) \le C_1t^{-\frac d2} \exp(-C_2t^{-1}|x-y|^2), \qquad t \in \left[0,T\right], \ x, y \in \bar{D}.
%\end{equation}
%where $C_1$ and $C_2$ are positive constants.
%\end{lemma}

%{\color{blue}{\begin{proof}
%...TO DO...
%hint: compare \cite[Proposition 3.3(a)]{Sowers1994}, some explicit computation, \cite[Lemma 2.6]{Barbu2014}.
%\end{proof}
%}}

%As a consequence we have then the following estimate
%\begin{equation}
%\label{stima_kernel}
%|p_N(t,x,\bar y)| \le C \, t^{\frac{\lambda-d}{2}} \, |x-\bar y|^{-\lambda}, \qquad t \in \left[0,T\right], \ x, y \in \bar{D}.%\ x \in \bR^d_+,\ \bar y = (\tilde y,0) \in \partial \bR^d_+.
%\end{equation}
%where $C$ is a constant that depends on $C_1$, $C_2$ and $\lambda$, and $\lambda \ge 0$.

%{\color{blue}{scrivere idea passaggi...}}

%\end{minipage}
%}\\ \smallskip

%For the following %(lower and upper) 
We state now a lower bound estimates for the Poisson kernel: we refer to \cite[Theorem 4.1]{Yang2013}.

\begin{lemma}
\label{lower}
Suppose that domain $D$ is convex. There exist constants $C_1$, $C_2>0$ and $T>0$, such that for $x,y \in D, t \in \left[0,T\right]$
\begin{equation}
\label{lower_bound}
p_N(t,x,y) \ge C_1t^{-\frac d2} \emph{exp}\left(-\frac{C_2|x-y|^2}{t}\right).
\end{equation}
\end{lemma}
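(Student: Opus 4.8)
The plan is to derive the Gaussian lower bound from a \emph{scale-invariant parabolic Harnack inequality} for the heat flow on $D$ with the Robin boundary condition $\partial_\nu u + \beta u = 0$, and then to produce the Gaussian factor $\exp(-C_2|x-y|^2/t)$ by a chaining argument that uses convexity of $D$ in an essential way. Since $D$ is bounded and $\beta$ is a fixed constant, the zeroth–order boundary term $\beta u$ is a bounded perturbation of the pure Neumann problem: over the finite horizon $[0,T]$ it only modifies the constants (one may either run Moser's iteration directly for the weak formulation of the Robin problem, where the boundary form $(u,v)\mapsto \int_{\partial D}\beta uv\,\sigma({\rm d}z)$ is lower order, or compare against the Neumann kernel by Duhamel; for a lower bound the sign $\beta\ge 0$ is, if anything, favourable). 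There is also a probabilistic variant: $p_N(t,x,y)$ is, up to the lower–order Robin correction, the transition density of reflecting Brownian motion in $D$, and convexity lets one bound from below the probability that the reflected path stays in a thin Euclidean tube joining $x$ to $y$; I describe the analytic route, which is the one matching \cite{Yang2013}.

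First I would record the geometric facts that make the Harnack machinery applicable \emph{up to the boundary}. A bounded convex domain $D$ is a $W^{1,2}$–extension domain, the Lebesgue measure restricted to $\bar D$ is volume doubling with a uniform constant, and the $L^2$ Poincaré inequality holds on all ``surface balls'' $B(z_0,r)\cap D$ with a constant independent of $z_0\in\bar D$ and of $0<r\le\mathrm{diam}(D)$; moreover convexity forces the intrinsic geodesic distance on $\bar D$ to agree with the Euclidean distance. By the Grigor'yan--Saloff-Coste equivalence, these properties are equivalent to the scale-invariant parabolic Harnack inequality for nonnegative weak solutions of $\partial_t u=\tfrac12\Delta u$ in cylinders $(s_0-r^2,s_0)\times\big(B(z_0,r)\cap D\big)$ carrying the boundary condition on $\partial D\cap B(z_0,r)$, for all $r\le\mathrm{diam}(D)$, and hence also to two-sided Gaussian bounds on the corresponding heat kernel.

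Next I would establish the on-diagonal lower bound $p_N(t,x,x)\ge c\,t^{-d/2}$ for $x\in D$, $t\in(0,T]$: for small $t$ this follows from a local comparison with the full-space heat kernel $Z$ when $x$ is in the interior, and with a half-space Robin kernel when $x$ is close to $\partial D$, using that $\partial D$ is $C^\infty$ and that for finite $\beta$ and small $t$ the half-space Robin kernel stays comparable to the Neumann one on the diagonal; for $t$ comparable to $T$ it follows from the eigenfunction expansion $p_N(t,x,y)=\sum_k e^{-\lambda_k t}\varphi_k(x)\varphi_k(y)$ and strict positivity of the ground state $\varphi_0$. Then I chain: given $x,y\in D$ and $t\in(0,T]$, put $N=1+\lfloor |x-y|^2/t\rfloor$; since $D$ is convex the Euclidean segment $[x,y]$ lies in $\bar D$, so I choose points $x=z_0,z_1,\dots,z_N=y$ on it with $|z_i-z_{i-1}|\lesssim\sqrt{t/N}$ and times $t/2=t_0<t_1<\dots<t_N=t$ with increments of size $\sim t/(2N)$, so that each cylinder around $(t_{i-1},z_{i-1})$ has parabolic aspect ratio $\asymp 1$. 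Applying the parabolic Harnack inequality $N$ times to the nonnegative solution $(s,z)\mapsto p_N(s,x,z)$ gives $p_N(t,x,y)\ge c^N\,p_N(t/2,x,x)\ge c^N\,c\,(t/2)^{-d/2}$, and writing $c^N=\exp(-N\log(1/c))\ge\exp\!\big(-C_2(1+|x-y|^2/t)\big)$ and absorbing $e^{-C_2}$ into the prefactor gives exactly $p_N(t,x,y)\ge C_1 t^{-d/2}\exp(-C_2|x-y|^2/t)$.

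The main obstacle is the boundary content of the second step: checking that the Neumann/Robin realization on a convex domain really fits the Grigor'yan--Saloff-Coste hypotheses — concretely, the uniform volume doubling of $|B(z_0,r)\cap D|$ and, above all, the uniform $L^2$ Poincaré inequality on surface balls $B(z_0,r)\cap D$ — and that the lower–order Robin term is carried through Moser's iteration over $[0,T]$ with controlled constants. These hold for bounded convex domains, but they are the technical heart of the argument; once they are in hand, the on-diagonal estimate, the treatment of $\beta$, and the chaining are all routine.
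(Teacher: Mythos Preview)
The paper does not supply its own proof of this lemma: it simply quotes the result from \cite[Theorem~4.1]{Yang2013} and moves on. So there is nothing to compare your argument against in the paper itself; what you have written is a correct outline of the standard route to such Gaussian lower bounds (volume doubling and scale-invariant Poincar\'e up to the boundary for convex $D$, hence the parabolic Harnack inequality by the Grigor'yan--Saloff-Coste equivalence, then on-diagonal lower bound plus chaining along the Euclidean segment, which stays in $\bar D$ by convexity). This is essentially the mechanism behind the cited result as well, so your approach is aligned with the reference rather than different from it.

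One small point worth flagging: in the paper $p_N$ is first introduced as the \emph{Poisson} kernel for the inhomogeneous Robin problem, with second spatial argument on $\partial D$, whereas the lemma is stated for $x,y\in D$ and the reference \cite{Yang2013} concerns the Neumann \emph{heat} kernel. In the applications (the proof of Lemma~\ref{esis_dens_Z} and Lemma~\ref{DF}) the bound is in fact used with $\bar y\in\partial D$. Your reading of $p_N$ here as the Neumann/Robin heat kernel (or its boundary trace) is the right one, and your remark that the Robin term $\beta u$ with $\beta\ge 0$ is a bounded lower-order perturbation over $[0,T]$ is exactly how one reconciles the cited Neumann bound with the Robin kernel used in the paper.
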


%As mentioned before, we next turn to the spatial derivative of the kernel.
Finally we give some useful estimates about the spatial derivative of the kernel.
The following lemma is provided in \cite[Proposition 3.3]{Sowers1994}

\begin{lemma}\label{l.app1}
There exists a constant $k \le 1$ such that
\begin{equation}
\label{stima_der_kernel}
|\nabla p_N(t,x,\bar y)| \le k^{-1} \exp(-k |x-\bar y|^2/t) \, t^{-(d+1)/2}
\end{equation}
for all $t \in (0,T)$, $x \in \mathring{D}$ and $\bar y \in \partial D$.
\end{lemma}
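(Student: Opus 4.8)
\emph{Sketch of proof.} The plan is to use the parametrix (Levi) representation \eqref{kernel_expres}, which I write as
$p_N(t,x,\bar y)=-2\,\Gamma(t,x,\bar y)-2\int_0^t\!\int_{\partial D}\Gamma(t,x,r,z)\,R(r,z,\bar y)\,\sigma({\rm d}z)\,{\rm d}r$ with $R=\sum_{n\ge1}M_n$ the resolvent kernel on $\partial D$, to differentiate in $x$ under the integral, and to estimate each piece. The essential difference with the computation in the proof of Corollary~\ref{co:stima} is that there only the weak power bound \eqref{stima_kernel_hom} was needed, whereas here the Gaussian factor $\exp(-k|x-\bar y|^2/t)$ must be preserved; so the first task is to upgrade all the intermediate bounds to genuine \emph{Gaussian} bounds. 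This is legitimate because $L$ has constant coefficients, so $\Gamma(t,x,s,y)=Z(t-s,x-y)$ is exactly the Gauss--Weierstrass kernel, for which $|\Gamma(t,x,s,y)|\le C(t-s)^{-d/2}e^{-c|x-y|^2/(t-s)}$ and $|\nabla_x\Gamma(t,x,s,y)|\le C(t-s)^{-(d+1)/2}e^{-c|x-y|^2/(t-s)}$; feeding the second of these into the leading term $-2\Gamma(t,x,\bar y)$ already produces exactly a bound of the form \eqref{stima_der_kernel}.

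The real work is the correction term. First I would refine the estimates of $M$ and of the iterates $M_n$ on $\partial D\times\partial D$: since $D$ is convex with $C^\infty$ boundary, one has the curvature estimate $|\langle\nu(x),x-y\rangle|\le\kappa|x-y|^2$ for $x,y\in\partial D$, which, inserted in $M(t,x,s,y)=[\partial_{\nu(x)}\Gamma+\beta\Gamma](t,x,s,y)=-\tfrac{\langle\nu(x),x-y\rangle}{t-s}Z(t-s,x-y)+\beta Z(t-s,x-y)$ together with the elementary inequality $u\,e^{-cu}\le C\,e^{-c'u}$, absorbs one power $(t-s)^{-1/2}$ and gives $|M(t,x,s,y)|\le C(t-s)^{-d/2}e^{-c|x-y|^2/(t-s)}$ for $x,y\in\partial D$ (this is the very gain that makes the series in \eqref{kernel_expres} converge, now carried with the Gaussian weight). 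Iterating $M_{n+1}=\int M\,M_n$ by a Chapman--Kolmogorov argument over the $(d-1)$-dimensional manifold $\partial D$ --- the $(d-1)$-dimensional heat factors reproduce, while the residual half-integer powers of $(t-r)$ and $(r-s)$ are time-integrable and generate a convergent Gamma-function series exactly as in Corollary~\ref{co:stima}, using Lemmas~\ref{stim_kernel_bordo} and \ref{integral} --- yields $|R(r,z,\bar y)|\le C(r-s)^{-d/2}e^{-c|z-\bar y|^2/(r-s)}$ (and, en route, the Gaussian upper bound $|p_N(t,x,\bar y)|\le C\,t^{-d/2}e^{-c|x-\bar y|^2/t}$, which the paper so far only had in power form). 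It then remains to bound $\int_0^t\!\int_{\partial D}\nabla_x\Gamma(t,x,r,z)\,R(r,z,\bar y)\,\sigma({\rm d}z)\,{\rm d}r$; for this I would write $\nabla_x\Gamma=-\nabla_z\Gamma$, split $\nabla_z$ into its tangential and normal parts along $\partial D$, integrate the tangential component by parts on the closed manifold $\partial D$ (the surface divergence of the tangent field being controlled by the bounded mean curvature of $\partial D$) so as to move the derivative onto $R$, and estimate the resulting kernels with the Gaussian bounds above.

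The step I expect to be the main obstacle is precisely this last estimate of the correction term, and it is where the smoothness and convexity of $\partial D$ are genuinely used. A naive differentiation under the integral sign produces the factor $|\nabla_x\Gamma(t,x,r,z)|\sim(t-r)^{-(d+1)/2}$, whose singularity at $r=t$ is \emph{not} integrable in time once the $(d-1)$-dimensional surface convolution has been carried out, so term by term one only gets a bound that diverges (logarithmically) as $x\to\partial D$ --- incompatible with the uniform statement. Replacing $\nabla_x\Gamma=-\nabla_z\Gamma$ and integrating the tangential component by parts trades this singularity for a \emph{surface derivative of the resolvent kernel} $\nabla_z^{T}R$, which by the refined bound above costs only a harmless $(r-s)^{-1/2}$ and leaves the time-integrable singularity $(t-r)^{-1/2}$ after the surface convolution; the normal component is handled with the curvature bound once more, together with the fact that $x\in\mathring{D}$ is interior, so that $\mathrm{dist}(x,\partial D)\le|x-z|$ enters only through the Gaussian. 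Summing the resulting convergent series and adjusting the Gaussian constants gives $|\nabla_x p_N(t,x,\bar y)|\le C\,t^{-(d+1)/2}e^{-c|x-\bar y|^2/t}$ on $(0,T)\times\mathring{D}\times\partial D$, and any $k\le1$ with $k^{-1}\ge C$ and $k\le c$ puts this in the stated form. Alternatively, once the Gaussian upper bound for $p_N$ is in hand, \eqref{stima_der_kernel} can be deduced from interior and boundary parabolic gradient estimates for the Robin heat problem applied to $p_N(\cdot,\cdot,s,\bar y)$, which solves the homogeneous equation in $\mathring{D}$ and the homogeneous Robin condition on $\partial D\setminus\{\bar y\}$; this is essentially the route of \cite[Proposition~3.3]{Sowers1994}.
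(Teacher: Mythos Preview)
The paper does not prove this lemma at all: it simply attributes the estimate to \cite[Proposition~3.3]{Sowers1994} and moves on. So there is no ``paper's own proof'' to compare your attempt against.

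Your parametrix sketch is a reasonable outline of how one could obtain \eqref{stima_der_kernel} directly from the representation \eqref{kernel_expres}, and you correctly isolate the real issue: naive differentiation of the correction term produces a time singularity $(t-r)^{-(d+1)/2}$ that is not integrable after the $(d-1)$-dimensional surface convolution, so some device (your proposed tangential integration by parts, or a cancellation argument) is needed to recover one half-power of $t-r$. That step is where the argument is still genuinely incomplete: you assert a Gaussian bound for the \emph{surface gradient} $\nabla_z^{T}R$ of the resolvent, but this does not follow from the bound on $R$ you derived, and obtaining it essentially requires rerunning the whole parametrix iteration one derivative up. The alternative you mention at the end --- first establishing the Gaussian upper bound on $p_N$ and then invoking interior/boundary parabolic gradient estimates for the homogeneous Robin problem --- is closer in spirit to what \cite{Sowers1994} actually does and is probably the cleaner route. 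Either way, since the paper defers entirely to the reference, your sketch goes well beyond what the authors provide.
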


The above estimate is not so easy to deal with, because of the presence of the Gaussian exponential. 
However, if we recall the  inequalities \eqref{barbu:e1} and \eqref{c1}, i.e.,
\begin{equation*}
t^{-n/2} e^{-k r^2/t} \le K t^{-\mu} r^{-n+2\mu}
\end{equation*}
then we have the following.

\begin{corollary}
\label{c.app1}
There exists a constant $k > 0$ such that
\begin{equation}
\label{stima2_der_kernel}
|\nabla p_N(t,x,\bar y)| \le k \, |x-\bar y|^{-d+2\mu} \, t^{-(2\mu+1)/2}
\end{equation}
for all $t \in (0,T)$, $x \in \mathring{D}$ and $\bar y \in \partial D$.
%$x \in \circ\bR^d_+$ and $\bar y \in \partial \bR_+^d$.
\end{corollary}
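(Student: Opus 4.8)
The plan is to substitute the Gaussian bound of Lemma~\ref{l.app1} directly into the elementary inequality $t^{-n/2}e^{-kr^2/t}\le K\,t^{-\mu}r^{-n+2\mu}$ recalled just above, i.e. \eqref{barbu:e1}--\eqref{c1}. Concretely, I would first rewrite Lemma~\ref{l.app1} by peeling off a factor $t^{-1/2}$,
\begin{equation*}
|\nabla p_N(t,x,\bar y)| \le k^{-1}\,t^{-1/2}\,\Big(t^{-d/2}\exp\big(-k|x-\bar y|^2/t\big)\Big),
\qquad t\in(0,T),\ x\in\mathring{D},\ \bar y\in\partial D,
\end{equation*}
and then apply the elementary inequality to the bracketed factor with $n=d$ and $r=|x-\bar y|$, getting $t^{-d/2}e^{-k|x-\bar y|^2/t}\le K\,t^{-\mu}|x-\bar y|^{-d+2\mu}$. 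Plugging this in and collecting the powers of $t$,
\begin{equation*}
|\nabla p_N(t,x,\bar y)| \le k^{-1}K\,t^{-1/2}\,t^{-\mu}\,|x-\bar y|^{-d+2\mu} = k^{-1}K\,|x-\bar y|^{-d+2\mu}\,t^{-(2\mu+1)/2},
\end{equation*}
which is exactly \eqref{stima2_der_kernel} with constant (again denoted $k$) equal to $k^{-1}K$.

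The one point worth spelling out is why the elementary inequality holds in the form used, so I would include a short self-contained verification: setting $v:=|x-\bar y|^2/t\in(0,\infty)$ and cancelling the common power $|x-\bar y|^{-d}$, the bound $t^{-d/2}e^{-k|x-\bar y|^2/t}\le K\,t^{-\mu}|x-\bar y|^{-d+2\mu}$ is equivalent to the one-variable estimate $v^{(d-2\mu)/2}e^{-kv}\le K$. Since $\tfrac12<\mu<1\le d/2$ (so $d\ge2$), the exponent $(d-2\mu)/2$ is nonnegative, hence $v\mapsto v^{(d-2\mu)/2}e^{-kv}$ is bounded near the origin and decays exponentially at $+\infty$; it therefore has a finite supremum, and a one-line optimisation gives the admissible value
\begin{equation*}
K=\Big(\frac{d-2\mu}{2ek}\Big)^{(d-2\mu)/2}
\end{equation*}
(with the convention $0^0=1$ in the boundary case $d=2\mu$).

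There is no genuine difficulty here; the only thing to be careful about is the bookkeeping of the exponents, namely that the elementary inequality may be invoked only when the polynomial prefactor has a nonnegative power, i.e. $\mu\le d/2$, which is consistent with the standing range $\tfrac12<\mu<1$ exactly for $d\ge2$. I would also flag, as the paper does for its constants, that the resulting $k$ depends on $d$, on $\mu$, and on the constant $k$ of Lemma~\ref{l.app1}, and is not universal — immaterial, since in the sequel only a single fixed $\mu\in(\tfrac12,1)$ (and in fact $\mu$ close to $\tfrac12$) is ever used.
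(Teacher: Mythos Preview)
Your proof is correct and follows exactly the route the paper indicates: apply the elementary bound \eqref{barbu:e1}--\eqref{c1} to the Gaussian estimate of Lemma~\ref{l.app1}. The paper gives no further detail beyond pointing to those two inequalities, so your explicit splitting off of the factor $t^{-1/2}$ and subsequent application with $n=d$ is precisely the computation the paper leaves implicit. Your added verification of the one-variable inequality and the observation that the argument needs $d\ge 2$ (so that $(d-2\mu)/2\ge 0$ in the range $\tfrac12<\mu<1$) is a useful caveat the paper does not spell out.
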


%******************PRELIMINARIES MALLIAVIN CALCULS

%\newpage
\section{Preliminaries on Malliavin calculus}
\label{sec2}

Let us recall some basic facts about the Malliavin calculus with respect to (standard and fractional) Brownian motion; for full details,
we refer to \cite{Nualart2006}.

Fix a measurable space $(S,\cS)$ with a finite measure $\mu$ on it, as well as a time interval $[0, T]$.

We are given a complete probability space $(\Omega,\cF,\bP)$ and a centered Gaussian family $B = \{B(h),\, h \in \cH\}$ defined in $\Omega$.
The space $\cH$ is constructed below.

Recall that a fractional Brownian motion $B^H = \{B^H(t),\ t \in [0,T]\}$
is a centered Gaussian process with covariance function
\begin{align}
\label{cov}
R_H(t,s) = \frac12 (s^{2H} + t^{2H} - |t-s|^{2H}), \qquad s, t \in [0,T].
\end{align}

Let $\cE$ be the space of step functions on $[0,T] \times S$. 
We denote by $\cH$ the closure of $\cE$ with 
respect to the scalar product
\begin{align*}
\langle {\mathds 1}_{[0,t]} \times {\mathds 1}_A, {\mathds 1}_{[0,s]} \times {\mathds 1}_B \rangle_{\cH} = R_H(t,s) \,  \mu(A \cap B);
\end{align*}
notice that in case $H=1/2$ then the first component in $\cH$ is the standard $L^2$ space with respect to the Lebesgue measure on $[0,T]$, so that
for  $\phi, \psi \in \cH$ we have
\begin{align*}
\langle \phi, \psi \rangle_{\cH} = \int_0^T \int_S \phi(s,\sigma) \psi(s,\sigma) \, \mu({\rm d}\sigma) \, {\rm d}s.
\end{align*}
In case of a fractional Brownian motion with Hurst parameter $H > 1/2$ it holds
\begin{align*}
\langle \phi, \psi \rangle_{\cH} =  \int_S \int_0^T \int_0^T |s-t|^{2H-2} \phi(s,\sigma) \psi(t,\sigma) \, {\rm d}t \, {\rm d}s  \, \mu({\rm d}\sigma),
\end{align*}
$\phi, \psi \in \cH$.

Thus, in case $H = 1/2$, we say that the Gaussian family $B$ is associated to a Brownian motion process
$B_{s,\sigma}$ on $\cH$ and in case $H > \frac12$ it is associated to a fractional Brownian motion $B_{s,\sigma}$
via the identification
\begin{align*}
B(\phi) =  \int_0^T \int_S \phi(s,\sigma) \, {\rm d}B_{s,\sigma}, \qquad \phi \in \cH.
\end{align*}

%{\color{blue}{
%\begin{example}
%Taking $S=\left\{1,2,...N\right\}$ and $ \mu= \sum_{i=1}^N \delta_i$ what we get is the $N$-dimensional brownian motion ($H=\frac 12$) or the $N$-dimensional fractional Brownian motion ($H>\frac 12$).
%\end{example}
%}}

A $\cF$-measurable real
valued random variable $F$ is said to be cylindrical if it can be
written as
\begin{equation*}
F=f \left( B(\phi^1) ,\ldots, B(\phi^n) \right)\;,
\end{equation*}
where $\phi^i \in \cH$ and $f:\bR^n \to \bR$ is a $C^{\infty}$ bounded function. The set of
cylindrical random variables is denoted $\mathcal{S}$. The
Malliavin derivative of $F \in \mathcal{S}$ is the 
stochastic process $D F = \{D_{s,\sigma} F,\ s \in [0,T],\ \sigma \in S\}$ given by
\[
{D}_{s,\sigma} F=\sum_{i=1}^{n} \phi^i (s,\sigma) \frac{\partial f}{\partial
x_i} \left( B(\phi^1) ,\ldots, B(\phi^n) \right).
\]
More generally, we can introduce iterated derivatives. If $F \in
\mathcal{S}$, we set
\[
D^k_{t_1,\ldots,t_k, \sigma_1, \ldots, \sigma_k} F = D_{t_1,\sigma_1}
\ldots D_{t_k,\sigma_k} F.
\]
For any $p \geq 1$, the operator $D^k$ is closable from
$\mathcal{S}$ into $L^p \left( \cC ( [0,T] \times S ,
\bR) , \cH^{\otimes k} \right)$. We denote by
$\bD^{k,p}(\mathcal{H})$ the closure of the class of
cylindrical random variables with respect to the norm
\[
\left\| F\right\| _{k,p}=\left( \bE\left( |F|^{p}\right)
+\sum_{j=1}^k \mathbb{E}\left( \left\| D^j F\right\|
_{\mathcal{H}^{\otimes j}}^{p}\right) \right) ^{\frac{1}{p}},
\]
and
\[
\mathbb{D}^{\infty}(\mathcal{H})=\bigcap_{p \geq 1} \bigcap_{k
\geq 1} \mathbb{D}^{k,p}(\mathcal{H}).
\]
We also introduce the localized spaces $\bD^{k,p}_{\rm loc}
(\mathcal{H})$ by saying that a random variable $F$ belongs to
$\bD^{k,p}_{\rm loc} (\mathcal{H})$ if there exists a sequence of sets
$\Omega_n \subset \Omega$ and random variables $F_n \in \bD^{k,p}
(\cH)$ such that $\Omega_n \uparrow \Omega$
almost surely and such that $F = F_n$ on $\Omega_n$.

We then have the following key result which stems from Theorem
2.1.2 and Corollary 2.1.2. in \cite{Nualart2006}:

\begin{theorem}\label{theo:dens}
Let $F=(F_1,\ldots,F_n)$ be a $\cF$-measurable random
vector such that:
\begin{enumerate}
\item For every $i=1,\ldots,n$, $F_i \in \bD^{1,2}_{\rm loc}(\cH)$;
\item The Malliavin matrix
of the random vector $F$:
$%\[
\Gamma= \left(  \langle D F^i , {D} F^j
\rangle_{\cH}  \right)_{1 \leq i,j \leq n}
$%\]
is invertible almost surely.
\end{enumerate}
Then the law of $F$ has a density with respect to the Lebesgue
measure on $\bR^n$. 
\\
If moreover $F \in \bD^\infty (\cH)$ and, for
every $p >1$,
\[
\mathbb{E} \left( | \det \Gamma | ^{-p} \right) < +
\infty,
\]
then this density is smooth.
\end{theorem}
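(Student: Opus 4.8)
\medskip
\noindent\textbf{Proof proposal.} This is a standard consequence of the integration-by-parts calculus — essentially \cite[Theorem 2.1.2 and Corollary 2.1.2]{Nualart2006} — and I sketch the line of argument I would follow. For the first assertion the guiding principle is the elementary measure-theoretic fact that a finite Borel measure $\mu$ on $\bR^n$ is absolutely continuous with respect to Lebesgue measure provided that, for each $i=1,\dots,n$, there is a constant $c_i$ with $\bigl|\int_{\bR^n}\partial_i\varphi\,\mathrm d\mu\bigr|\le c_i\|\varphi\|_\infty$ for every $\varphi\in C_c^\infty(\bR^n)$; one proves this by convolving $\mu$ with a mollifier, observing that the resulting smooth densities are bounded in $W^{1,1}(\bR^n)$, hence in a reflexive Lebesgue space (use $L^{n/(n-1)}$ when $n\ge 2$, and the $L^1\cap L^\infty$ bound when $n=1$), and extracting a weak limit, which must coincide with $\mu$. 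I would apply this to $\mu=\bP\circ F^{-1}$. To produce the weights $c_i$, assume first that $F\in(\bD^{1,2}(\cH))^n$ and that the entries of $\Gamma^{-1}$ are regular enough that $u^{(j)}:=\sum_{l}(\Gamma^{-1})_{jl}\,DF^l$ belongs to $\mathrm{Dom}\,\delta$; the chain rule gives $\langle D(\varphi(F)),DF^l\rangle_\cH=\sum_j\partial_j\varphi(F)\,\Gamma_{jl}$, inverting the symmetric matrix $\Gamma$ yields $\partial_j\varphi(F)=\langle D(\varphi(F)),u^{(j)}\rangle_\cH$, and the duality $\bE[\langle DG,u\rangle_\cH]=\bE[G\,\delta(u)]$ gives the integration-by-parts identity
\[
\bE[\partial_j\varphi(F)]=\bE\bigl[\varphi(F)\,\delta(u^{(j)})\bigr],
\]
so $c_j=\bE|\delta(u^{(j)})|$ does the job.

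To dispense with the a priori integrability of $\Gamma^{-1}$ and to accommodate the hypothesis $F^i\in\bD^{1,2}_{\rm loc}(\cH)$, I would argue by localization: if $(\Omega_k,F_k)$ localizes $F$, then by the local property of $D$ one has $\Gamma_{F_k}=\Gamma_F$ on $\Omega_k$, so $\det\Gamma_{F_k}>0$ a.s.\ on $\Omega_k$; since $\Omega_k\uparrow\Omega$ a.s., for every Borel set $A$ one has $\bP(F\in A)=\lim_k\bP(F\in A,\ \Omega_k)\le\liminf_k\bP\bigl(F_k\in A,\ \det\Gamma_{F_k}>0\bigr)$, so it suffices to prove that for $G\in(\bD^{1,2}(\cH))^n$ the set function $A\mapsto\bP(G\in A,\ \det\Gamma_G>0)$ is absolutely continuous. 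This last point is the technical heart of the Bouleau--Hirsch criterion: one truncates $\det\Gamma_G$ and approximates $G$ by smooth functionals so as to legitimate the integration-by-parts formula above, and then lets the truncation parameter tend to infinity.

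For the smoothness assertion I would exploit the extra hypotheses $F\in\bD^\infty(\cH)$ and $\bE|\det\Gamma|^{-p}<\infty$ for all $p>1$: differentiating $\Gamma\,\Gamma^{-1}=I$ and using that $(\det\Gamma)^{-1}\in\bigcap_{p>1}L^p$ shows that the entries of $\Gamma^{-1}$ lie in $\bD^\infty(\cH)$, hence so do the fields $u^{(j)}$; iterating the integration-by-parts formula, together with the identity $\delta(Gu)=G\,\delta(u)-\langle DG,u\rangle_\cH$, one checks that the weights produced after any number of steps belong to $\bigcap_{p>1}L^p(\Omega)$. One thus obtains, for every multi-index $\beta$, a random variable $H_\beta\in\bigcap_{p>1}L^p(\Omega)$ with $\bE[\partial^\beta\varphi(F)]=\bE[\varphi(F)\,H_\beta]$ for all $\varphi\in C_c^\infty(\bR^n)$. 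Applying this to suitable truncations of the exponentials $x\mapsto\mathrm e^{\mathrm i\langle\xi,x\rangle}$ gives $|\xi^\beta|\,\bigl|\widehat{\bP\circ F^{-1}}(\xi)\bigr|\le\bE|H_\beta|$ for every $\beta$, so the characteristic function of $F$ decays faster than any polynomial, and Fourier inversion then yields a density lying in $C^\infty(\bR^n)$ together with all its derivatives.

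The step I expect to be the genuine obstacle is the one in the second paragraph — upgrading ``$F\in(\bD^{1,2}(\cH))^n$ with $\Gamma$ only almost surely invertible'' to absolute continuity without any integrability of $\Gamma^{-1}$ — since one cannot simply insert the indicator of $\{\det\Gamma>1/N\}$ into the integration-by-parts identity and must instead regularize both $G$ and $\det\Gamma$ with care; once one is in the fully smooth regime ($F\in\bD^\infty$, all negative moments of $\det\Gamma$ finite) the remainder is essentially bookkeeping with the operators $D$ and $\delta$.
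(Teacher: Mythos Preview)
Your sketch is a correct outline of the standard Bouleau--Hirsch / integration-by-parts argument, and you correctly identify the source \cite[Theorem 2.1.2 and Corollary 2.1.2]{Nualart2006}. Note, however, that the paper does not actually prove this theorem: it states it as a quoted result from Nualart's book, with no proof given. So there is nothing to compare your argument against --- you have supplied more than the paper does. Your identification of the delicate step (absolute continuity when $\Gamma$ is only a.s.\ invertible, with no moment control on $\Gamma^{-1}$) is accurate, and the localization and truncation strategy you describe is indeed how Nualart handles it.
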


The following result is useful in the proof of  regularity for the solution of the stochastic differential equation \eqref{eq:nl}.
We 
%proof of this part is based on a result of colorability of the Malliavin derivative that we 
recall here for the sake of completeness.
A proof can be found, for instance, in \cite[Lemma 1.5.3]{Nualart2006}.

\begin{proposition}
\label{p3}
Let $\{F_n\}$ be a sequence of variables in $\bD^{k,p}$ for some $p > 1$. 
Assume that the sequence $F_n$ converges to $F$ in
$L^p(\Omega)$ and that
\begin{equation}
\label{e4}
\sup_n \|F_n\|_{k,p} < \infty.
\end{equation}
Then $F$ belongs to $\bD^{k,p}$.
\end{proposition}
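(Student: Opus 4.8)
The plan is to realize $\bD^{k,p}$ as a closed subspace of a reflexive Banach space and then exploit weak sequential compactness. Set
\[
E := L^p(\Omega) \times \prod_{j=1}^k L^p\bigl(\Omega; \cH^{\otimes j}\bigr),
\]
equipped with the norm $\|(G_0,G_1,\dots,G_k)\|_E = \bigl(\|G_0\|_{L^p}^p + \sum_{j=1}^k \|G_j\|_{L^p(\Omega;\cH^{\otimes j})}^p\bigr)^{1/p}$, and consider the map $\iota: \bD^{k,p} \to E$ defined by $\iota(G) = (G, DG, D^2G, \dots, D^k G)$. By the very definition of the norm $\|\cdot\|_{k,p}$, the map $\iota$ is an isometry; moreover, since $\bD^{k,p}$ is complete (it is defined as the closure of the cylindrical random variables under $\|\cdot\|_{k,p}$), its image $\iota(\bD^{k,p})$ is a closed linear subspace of $E$. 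Because $p > 1$, each factor $L^p(\Omega)$ and $L^p(\Omega;\cH^{\otimes j})$ is reflexive, hence $E$ is reflexive, and therefore so is the closed subspace $\iota(\bD^{k,p})$.

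Now I would argue as follows. The hypothesis $\sup_n \|F_n\|_{k,p} < \infty$ says precisely that $\{\iota(F_n)\}$ is bounded in $\iota(\bD^{k,p})$. By weak sequential compactness of bounded sets in a reflexive space there is a subsequence $\{F_{n_i}\}$ such that $\iota(F_{n_i})$ converges weakly in $E$ to some element $e = (e_0, e_1, \dots, e_k)$; since $\iota(\bD^{k,p})$ is convex and norm-closed, it is weakly closed (Mazur's theorem), so $e$ lies in $\iota(\bD^{k,p})$, say $e = \iota(G)$ with $G \in \bD^{k,p}$, and in particular $G = e_0$. On the other hand, weak convergence of $\iota(F_{n_i})$ in $E$ forces $F_{n_i} \rightharpoonup e_0$ in $L^p(\Omega)$, while by assumption $F_{n_i} \to F$ strongly, hence weakly, in $L^p(\Omega)$; uniqueness of weak limits gives $e_0 = F$. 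Therefore $F = G \in \bD^{k,p}$ (and incidentally $D^j F = e_j$ for $j = 1,\dots,k$), which is the assertion.

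An equivalent route avoids naming $E$: extract, for each $j = 0,\dots,k$, a weak limit $\alpha_j$ of a common subsequence of $\{D^j F_n\}$ in $L^p(\Omega;\cH^{\otimes j})$, with $\alpha_0 = F$; apply Mazur's lemma to produce convex combinations $G_i = \sum_l a_l^{(i)} F_{n_l}$ with $D^j G_i \to \alpha_j$ strongly in $L^p(\Omega;\cH^{\otimes j})$ for every $j$ and $G_i \to F$ strongly in $L^p(\Omega)$, and then invoke the closability of each operator $D^j$ to conclude that $F \in \bD^{k,p}$ with $D^j F = \alpha_j$. Either way there is no genuine obstacle: the argument is soft and abstract, using nothing about the particular structure of the Malliavin derivative beyond the facts that $\bD^{k,p}$ is a Banach space and that $D^j$ is closable. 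The only place where the hypothesis $p > 1$ is really used is the reflexivity of the $L^p$-spaces — and indeed the statement fails for $p = 1$ — so the slightly delicate bookkeeping point is merely to make sure all the weak limits are taken along one common subsequence.
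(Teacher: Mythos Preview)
Your proof is correct. The paper does not supply its own argument for this proposition but simply refers to \cite[Lemma 1.5.3]{Nualart2006}; the proof there is precisely your second route (weak compactness in $L^p(\Omega;\cH^{\otimes j})$ for $p>1$, Mazur's lemma to upgrade to strong convergence of convex combinations, then closability of the derivative), so your approach coincides with the cited one.
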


%______________________________________________________

%EXISTENCE SOLUTION

%______________________________________________________

\section{The stochastic convolution process}
\label{scp}

We consider first the stochastic convolution term
\begin{align*}
Z(t,x) = \int_0^t \int_S \int_{\partial D} p_N(t-s,x,\bar y) \alpha(\sigma, \bar y) \,  {\rm d}\bar y \,{\rm d}B(\sigma,s)
\end{align*}
from equation \eqref{eq:nl2}.
For the sake of completeness we recall some basic facts about Wiener integral with respect to fractional Brownian motion. For more details see, for instance \cite{Bonaccorsi2008d}. In Section \ref{sec2} we have introduced the fractional Brownian motion as a Gaussian process with covariance function given by \eqref{cov}. It is useful having in mind another characterization of the process which will allows us to do all the required computations. 
\\
A fractional Brownian motion $B(\sigma, s)$ of Hurst parameter $H$ can be defined as the convolution product
\begin{equation*}
B(\sigma, t)=\int_0^t\int_SK_H(t-s){\rm d}W(\sigma,s)
\end{equation*}
where $\left\{W(\sigma, t), \sigma \in S, t \in \left[0,T\right]\right\}$ is a standard Brownian motion and $K_H$ is the kernel 
\begin{equation*}
K_H(t,s)=C_H(t-s)^{H-\frac 12}+C_H\left(\frac 12 -H\right)\int_s^t(u-s)^{H- \frac 32}\left(1- \left(\frac su\right)^{\frac 12 -H}\right) {\rm d}u.
\end{equation*}
where $C_H$ is a normalizing constant, given by 
\begin{equation}
C_H= \left(\frac{2H\Gamma\left(\frac 32-H\right)}{\Gamma\left(H+\frac 12\right)\Gamma(2-2H)}\right)^{\frac 12}
\end{equation}
Let us consider the integral
\begin{equation*}
\mathcal{I}(\varphi)=\int_0^t \int_S\varphi(\sigma, r){\rm d}B(\sigma, r)
\end{equation*}
then the It\^o isometry reads as
\begin{equation*}
\mathbb{E}|\mathcal{I}(\varphi)|^2= \int_0^t\int_S|K^*(\varphi(\sigma, \cdot))(s)|^2 \mu({\rm d}\sigma){\rm d}s
\end{equation*}
for an operator $K^*$ which maps the reproducing kernel Hilbert space $\mathcal{H}$ into $L^2(0,T)$:
\begin{equation*}
(K^*\varphi)(s)=\pmb{1}_{(0,t)}(s)\int_s^t\varphi(\cdot, r)\left(\frac sr\right)^{\frac 12 -H}(r-s)^{H- \frac 32}{\rm d}r.
\end{equation*}

\subsection{Global regularity and smoothness in space of the convolution process}

Next proposition is related to space regularity of the process $Z(t, x)$.

\begin{proposition}
\label{cont_convoluzione}
For any $t \in [0,T]$, there exists a version of the stochastic convolution process 
\begin{align*}
Z(t,x) = \int_0^t \int_S \int_{\partial D} p_N(t-s,x,\bar y) \alpha(\sigma,\bar y) \, {\rm d}\bar y \, {\rm d}B(\sigma,s)
\end{align*}
that is a continuous function on $D$ and locally H\"older continuous, of arbitrary exponent $\gamma < 1$.
\end{proposition}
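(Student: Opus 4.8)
The plan is to use the standard Kolmogorov continuity criterion for Gaussian fields. Since $Z(t,\cdot)$ is a Gaussian family (a Wiener integral against fractional Brownian motion), it suffices to obtain, for $x_1, x_2 \in D$ in a fixed compact subset, an estimate of the form
\begin{equation*}
\bE\bigl| Z(t,x_1) - Z(t,x_2) \bigr|^2 \le C \, |x_1 - x_2|^{2\gamma'}
\end{equation*}
for every $\gamma' < 1$; hypercontractivity (Gaussianity) then upgrades this to $L^p$ control of arbitrarily high order, and Kolmogorov's theorem yields a version that is locally H\"older of every exponent $\gamma < \gamma' \le 1$, hence in particular continuous on $D$. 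So the whole argument reduces to the second-moment estimate.

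To carry this out I would first apply the It\^o isometry for the fractional integral recalled above: writing $\varphi_i(s,\sigma) = \int_{\partial D} p_N(t-s, x_i, \bar y)\,\alpha(\sigma,\bar y)\,{\rm d}\bar y$, we have $\bE|Z(t,x_1)-Z(t,x_2)|^2 = \|\varphi_1 - \varphi_2\|_{\cH}^2$, which for $H > 1/2$ equals
\begin{equation*}
\int_S \int_0^t\int_0^t |s-r|^{2H-2}\,(\varphi_1-\varphi_2)(s,\sigma)\,(\varphi_1-\varphi_2)(r,\sigma)\,{\rm d}r\,{\rm d}s\,\mu({\rm d}\sigma).
\end{equation*}
Bounding $|s-r|^{2H-2}$ by a constant (since the time interval is finite and $2H-2 \in (-1,0)$ is integrable — more carefully, using $\int_0^T\int_0^T|s-r|^{2H-2}f(s)f(r) \le C_H \|f\|_{L^{1/H}(0,T)}^2$ or a cruder Cauchy–Schwarz bound), it remains to estimate $\int_{\partial D}\bigl(\int_0^t |p_N(t-s,x_1,\bar y) - p_N(t-s,x_2,\bar y)|\,\alpha(\sigma,\bar y)\,{\rm d}\bar y\bigr)^2$ uniformly in $\sigma$, or after integrating in $\sigma$ using $\alpha \in L^2(S\times\partial D)$. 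The increment in the kernel I would handle by the mean value theorem along the segment $[x_1,x_2] \subset D$ (here convexity of $D$, or at least that we stay inside $D$ at positive distance from $\partial D$, is used) together with the gradient bound of Corollary~\ref{c.app1}: $|\nabla_x p_N(\tau,x,\bar y)| \le k\,|x-\bar y|^{-d+2\mu}\,\tau^{-(2\mu+1)/2}$. This gives
\begin{equation*}
|p_N(t-s,x_1,\bar y) - p_N(t-s,x_2,\bar y)| \le C\,|x_1-x_2| \,(t-s)^{-(2\mu+1)/2}\,\sup_{z\in[x_1,x_2]}|z-\bar y|^{-d+2\mu},
\end{equation*}
and since $x_1,x_2$ lie in a compact $K \Subset D$ with ${\rm dist}(K,\partial D) = \delta > 0$, the spatial factor $|z-\bar y|^{-d+2\mu}$ is bounded by $\delta^{-d+2\mu}$ (choosing $\mu < d/2$, which is compatible with $1/2 < \mu < 1$ when $d \ge 2$; for $d=1$ the exponent is already positive). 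Then $\int_0^t (t-s)^{-(2\mu+1)/2}\,{\rm d}s$ is finite precisely when $2\mu + 1 < 2$, i.e. $\mu < 1/2$ — which clashes with Corollary~\ref{c.app1}. So in fact I would interpolate: write the kernel difference as the product of the increment bound raised to a power $\lambda \in (0,1)$ and the pointwise bound $|p_N| \le C(t-s)^{-\mu}|x-\bar y|^{2\mu-d}$ (Corollary~\ref{co:stima}) raised to the power $1-\lambda$, choosing $\lambda$ small enough that $\lambda(2\mu+1)/2 + (1-\lambda)\mu < 1$ (always possible by taking $\lambda$ near $0$) while the spatial exponents remain integrable on $\partial D$ — away from the boundary this latter point is automatic. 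This yields the bound $C_K\,|x_1-x_2|^{\lambda}$; since $\mu$ can be taken arbitrarily close to $1/2$, $\lambda$ can be taken arbitrarily close to $1$, giving H\"older continuity of every exponent $\gamma < 1$.

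The main obstacle is precisely this tension between the time-singularity of the gradient estimate (Corollary~\ref{c.app1}, with exponent $(2\mu+1)/2 > 1/2$, hence non-integrable in $s$ at $s=t$) and the need for an integrable time weight. The interpolation trick between the gradient bound and the (milder) pointwise bound is what resolves it, at the cost of getting H\"older exponent $\gamma < 1$ rather than Lipschitz — which is exactly what the statement claims. A secondary technical point to be careful about is that the constant $C_K$ blows up as $K \uparrow D$ (like a negative power of ${\rm dist}(K,\partial D)$), which is why the conclusion is only \emph{local} H\"older continuity on the open domain $D$ and cannot be extended to $\bar D$; this is consistent with the remark in the introduction that on $\partial D$ only $L^p$ regularity is available.
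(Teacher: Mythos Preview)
Your overall strategy---Kolmogorov's criterion combined with Gaussianity, reducing everything to a second-moment increment bound, and controlling the kernel difference via the mean value theorem and the gradient estimate---is exactly what the paper does. The paper also bounds the spatial factor by a negative power of $\mathrm{dist}(K,\partial D)$ on a compact $K\Subset D$, so your remark about why the result is only \emph{local} is on point.

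Where you diverge is in the handling of the time singularity, and here there is a genuine gap. You read Corollary~\ref{c.app1} as forcing $(2\mu+1)/2>1$ and then propose an interpolation between the gradient bound and the pointwise bound of Corollary~\ref{co:stima}. Two problems: first, Corollary~\ref{c.app1} carries no constraint $\mu>1/2$---that restriction belongs to Corollary~\ref{co:stima}. The gradient estimate, derived from Lemma~\ref{l.app1} via \eqref{c1}, is valid for any parameter, and can equivalently be written as $|\nabla_x p_N(\tau,x,\bar y)|\le C\,\tau^{-\mu}\,|x-\bar y|^{-(d+1)+2\mu}$ for \emph{any} $\mu>0$. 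This is the form the paper actually uses, choosing $\mu\in(1/2,H)$. Second, and more importantly, you do not exploit the smoothing coming from $H>1/2$: in the fractional isometry (either via $K^*$ or via the $|s-r|^{2H-2}$ kernel and the Hardy--Littlewood--Sobolev embedding $\|\cdot\|_{\cH}\le C\|\cdot\|_{L^{1/H}}$), the time singularity $(t-r)^{-\mu}$ is integrable as soon as $\mu<H$, not merely $\mu<1$. The paper carries this out with the $K^*$ representation and obtains directly $\bE|Z(t,x)-Z(t,z)|^2\le C\,|x-z|^2$---full Lipschitz at the $L^2$ level, no interpolation.

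Your interpolation, by contrast, does not reach the stated conclusion. With your (artificial) constraint $\mu>1/2$ in the gradient bound and the correct integrability threshold $\theta<H$ for the $\cH$-norm, the condition $\lambda(2\mu+1)/2+(1-\lambda)\mu<H$ forces $\lambda<2(H-\mu)<2H-1<1$, so you would only obtain H\"older exponents $\gamma<2H-1$, not all $\gamma<1$. The fix is simply to drop the spurious constraint on $\mu$ (or re-parametrize the gradient bound as the paper does) and take $\lambda=1$; then the interpolation is unnecessary and you recover the paper's argument.
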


\begin{proof} The thesis follows from Kolmogorov's criterium of continuity and the Gaussian character of the stochastic convolution process.
Let for simplicity be $\phi(t,x,\sigma) = \int_{\partial D} p_N(t,x,\bar y) \alpha(\sigma,\bar y) \, {\rm d}\bar y$;
first, we have that
\begin{align*}
\bE \left| Z(t,x) \right|^2 = \int_0^t \int_S \left| \left(K^*(\phi(t-\cdot,x,\sigma) \uno_{(0,t)}(\cdot)) \right)(s) \right|^2 \, \mu({\rm d}\sigma) \, {\rm d}s
\end{align*}
%then
%\begin{align*}
%\bE \left| \int_0^t p_N(t-s,0,x) \, {\rm d}B(s) \right|^2 \le t^{2H+1}
%\end{align*}
Fix $x \in D$ and $M$ a convex neighbourhood of $x$ strictly contained in $D$,
%subset of $\bR^d_+$, $x \in M$, 
$\mathop{\rm dist}(M,\partial D) > \varepsilon$. For any $z \in M$ it follows
\begin{align*}
\bE &\left| Z(t,x) - Z(t,z)\right|^2 = \int_0^t \int_S \left| \left(K^*([\phi(t-\cdot,x,\sigma)- \phi(t-\cdot,z,\sigma)] \uno_{(0,t)}(\cdot)) \right)(s) \right|^2 \, \mu({\rm d}\sigma) \, {\rm d}s
\\
&=
\int_0^t \int_S \left| \int_s^t \left(\frac{s}{r}\right)^{1/2-{H}} (r-s)^{{H}-3/2} [\phi(t-r,x,\sigma)- \phi(t-r,z,\sigma)] \, {\rm d}r \right|^2  \, \mu({\rm d}\sigma) \, {\rm d}s
\\
&\le t^{2H-1} \int_0^t \int_S s^{1-2{H}} \left| \int_s^t (r-s)^{{H}-3/2} [\phi(t-r,x,\sigma)- \phi(t-r,z,\sigma)] \, {\rm d}r \right|^2  \, \mu({\rm d}\sigma) \, {\rm d}s
\end{align*}
We may estimate, by using Corollary \ref{c.app1} for some $\mu \in \left(\frac12,H\right)$  (that is always possible, since $H > \frac12$ by assumption)
\begin{align*}
|\phi(t-r,x,\sigma)-& \phi(t-r,z,\sigma)| = \left| \int_{\partial D} [p_N(t-r,x,\bar y) - p_N(t-r,z,\bar y)] \alpha(\sigma, \bar y) \, {\rm d}\bar y \right|
\\
&\le \int_{\partial D} \int_0^1 \left| \frac{\partial}{\partial u}p_N(t-r, x + u(z-x), \bar y) \right| \, {\rm d}u \, \alpha(\sigma, \bar y) \, {\rm d}\bar y
\\
&\le |z-x| \int_{\partial D} \int_0^1 \left| \nabla p_N(t-r, x + u(z-x), \bar y) \right| \, {\rm d}u \, \alpha(\sigma, \bar y) \, {\rm d}\bar y
\\
&\le %\kappa_2 \varepsilon^{-\lambda} \ee^{-\varepsilon^2/\kappa_2 (t-r)} (t-r)^{-n/2-1/2+\lambda/2} \, \|\alpha\|_{L^1(\partial\cO)} \, |x-y|
(t-r)^{-\mu} \, |z-x| \, \int_{\partial D} \left( \int_0^1 |x + u(z-x) - \bar y|^{-(d+1)+2\mu} \, {\rm d}u \right) \alpha(\sigma, \bar y) \, {\rm d}\bar y
\\
&\le %\kappa_2 \varepsilon^{-\lambda} \ee^{-\varepsilon^2/\kappa_2 (t-r)} (t-r)^{-n/2-1/2+\lambda/2} \, \|\alpha\|_{L^1(\partial\cO)} \, |x-y|
(t-r)^{-\mu} \, |z-x| \, \varepsilon^{-(d+1)+2\mu} \, \int_{\partial D} \alpha(\sigma, \bar y) \, {\rm d}\bar y
\end{align*}
where we recall that $\varepsilon > 0$ is the distance between $M$ and $\partial D$.
%
%and using (\ref{e1}):
%\begin{align*}
%|\phi(t-r,x)- \phi(t-r,y)| &\le C(\kappa_2,n) \varepsilon^{-\lambda-n/2}  (t-r)^{(\lambda-1)/2} \, \|\alpha\|_{L^1(\partial\cO)} \, |x-y|
%\end{align*}
%then 
Thus we obtain
\begin{align*}
\bE &\left| Z(t,x) - Z(t,z)\right|^2 
\\
&\le
t^{2H-1} \int_0^t \int_S s^{1-2{H}} \left| \int_s^t (r-s)^{{H}-3/2} \left[ (t-r)^{-\mu} \, |z-x| \, \varepsilon^{-(d+1)+2\mu} \, \int_{\partial D} \alpha(\sigma, \bar y) \, {\rm d}\bar y \right] \, {\rm d}r \right|^2  \, \mu({\rm d}\sigma) \, {\rm d}s
\\
&\le
|z-x|^2 \, \varepsilon^{-2(d+1)+4\mu} \, \|\alpha\|^2_{L^2(\partial D \times S)}
t^{2H-1} \int_0^t  s^{1-2{H}} \left| \int_s^t (r-s)^{{H}-3/2} (t-r)^{-\mu}  \, {\rm d}r \right|^2  \, {\rm d}s
%
%
% C^2 \varepsilon^{-2\lambda-n} \|\alpha\|_{L^1(\partial\cO)}^2 t^{2{\Hurst}-1} |x-y|^2 \\
%&\qquad\int_0^t  s^{1-2{\Hurst}} \left| \int_s^t (r-s)^{{\Hurst}-3/2}  (t-r)^{(\lambda-1)/2} \, {\rm d}r \right|^2 \, {\rm d}s
%\\
%&\le C_{\alpha,{\Hurst},\lambda} \varepsilon^{-2\lambda-n} |x-y|^{2} t^{2{\Hurst}-1+\lambda} 
\end{align*}
%
% conti per verifica
%
%\framebox{
%\begin{minipage}{\textwidth}
%\begin{align*}
%\int_s^t& (r-s)^{H-3/2}(t-r)^{-\mu} \, {\rm d}r
%\\
%\intertext{change of variables: $r = s + z(t-s)$, ${\rm d}r = (t-s) \, {\rm d}z$, $(t-r) = (t-s)(1-z)$}
%&= \int_0^1 (t-s)^{H-3/2} z^{H-3/2} (t-s)^{-\mu} (1-z)^{-\mu} (t-s) \, {\rm d}z
%\\
%&= (t-s)^{H - \mu - 1/2} \int_0^1 z^{H-3/2} (1-z)^{-\mu} \, {\rm d}z
%\\
%&= (t-s)^{H - \mu - 1/2} \left\{\frac{\Gamma \left(H-\frac{1}{2}\right) \Gamma (1-\mu)}{\Gamma
%   \left(H-\mu+\frac{1}{2}\right)}\right\}
%\end{align*}
%provided $H > 1/2$ and $\mu < 1$
%\end{minipage}
%}\\ \smallskip
%
%
%
an explicit computation for the inner integral leads to
%taking the integral inside the bracket
\begin{align*}
\bE \left| Z(t,x) - Z(t,z)\right|^2 
\le&
C_{H,\mu}^2 \, |z-x|^2 \, \varepsilon^{-2(d+1)+4\mu} \, \|\alpha\|^2_{L^2(\partial D \times S)}
t^{2H-1} \int_0^t  s^{1-2{H}} \left(t-s \right)^{2H-2\mu-1}  \, {\rm d}s
\end{align*}
%
% conti per verifica
%
%\framebox{
%\begin{minipage}{\textwidth}
%as before:
%\begin{align*}
%\int_0^t& s^{1-2{H}} \left(t-s \right)^{2H-2\mu-1} \, {\rm d}s
%\\
%%\intertext{change of variables: $r = s + z(t-s)$, ${\rm d}r = (t-s) \, {\rm d}z$, $(t-r) = (t-s)(1-z)$}
%&= \int_0^1 t^{1-2H+2H-2\mu-1+1} z^{1-2H} (1-z)^{2H-2\mu-1} \, {\rm d}z
%\\
%&= t^{1-2\mu}\left\{\frac{\Gamma \left(2-2H\right) \Gamma (2H-2\mu)}{\Gamma
%   \left(2-2\mu\right)}\right\}
%\end{align*}
%provided $\mu < H$
%\end{minipage}
%}\\ \smallskip
%
%
%
hence
\begin{align*}
\bE \left| Z(t,x) - Z(t,z)\right|^2 
\le&
C_{H,\mu,\alpha,\varepsilon} \, t^{2H-2\mu} \, |z-x|^2.
\end{align*}
Taking into account that $Z(t,x) - Z(t,z)$ is a centered Gaussian random variable with variance bounded by the right-hand side of previous
inequality,
we have for any $k \in \bN$
\begin{align*}
\bE \left| Z(t,x) - Z(t,z)\right|^{2k}
\le&
C_{k,T,H,\mu,\alpha,\varepsilon}  \, |z-x|^{2k}.
\end{align*}
For $k$ large enough, writing $2k = d + (2k-d)$, we appeal to Kolmogorov's continuity theorem for random fields to conclude that
there exists a modification of $Z(t,\cdot)$ that is continuous on $M$ and $\gamma$-H\"older continuous for arbitrary $\gamma < \frac{2k-d}{2k}$ hence,
sending $k$ to infinity, for arbitrary $\gamma < 1$.
\end{proof}

\begin{remark}
\label {regolar_Z}
Under condition \textbf{(a1)} it can be easily shown that, for every $(t, x) \in \left[0,T\right] \times D$, $Z(t,x)$ is an $L^p(\Omega)$ %, \mathcal{F}, \mathbb{P})$ 
random variable, for every $p \ge 2$. 
%and for every $(t, \xi) \in \left[0,T\right] \times \partial D$, $\tau(Z(t,\xi)) \in L^p(\Omega, \mathcal{F}, \mathbb{P})$
The proof follows by using almost the same computations used in the proof of Proposition \ref{cont_convoluzione}. 
\end{remark}

In order to give a meaning to the nonlinear boundary conditions, we shall prove that this process has a trace
on the boundary $\partial D$, compare \cite[Lemma 2.7]{Barbu2014}.

\begin{definition}
If $E$ is a separable Banach space, we denote by $L^q_W(0,T;E)$ the space $L^q(0,T;L^q(\Omega, \mathcal{F},\mathbb{P};E))$ of all $\left\{\mathcal{F}_t\right\}_{t \ge 0}$-adapted mappings  $X:(0,T) \rightarrow L^q(\Omega, \mathcal{F}, \mathbb{P};E)$ with the norm 
\begin{equation*}
\|X\|_{L^q_W(0,T;E)}=\left(\int_0^T\mathbb{E}|X(t)|^q_E \ {\rm d}t\right)^{\frac 1q}
\end{equation*}
where $\mathbb{E}$ is the expectation in the probability space $(\Omega, \mathcal{F}, \mathbb{P})$.
\end{definition}

\begin{lemma}
\label{l6.X}
Let us assume the non degeneracy hypothesis \textbf{(a1')},
%i.e. $\alpha \in L^{\theta}(\partial D;L^2(S))$ with
%\begin{equation}
%\label{q_hyp}
%\theta > \frac{d-1}{2H-1}>d-1.
%\end{equation}
then the boundary trace of the stochastic convolution process $Z(t,\xi)$ belongs to $L^p_W(0,T;L^p(\partial D))$.
%and the following estimate holds:
%\begin{align*}
%\| \tau(Z) \|^2_{L^2_W(0,T;L^2(\partial D))} \le C(T,H) %\sum_{i=1}^N 
%\|\alpha\|^2_{L^2(\partial D \times S)}.
%\end{align*}
\end{lemma}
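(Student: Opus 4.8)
The plan is to use that, for each fixed $(t,\xi)$, the quantity $Z(t,\xi)$ is a Wiener integral with respect to the fractional Brownian motion $B$, hence a centered Gaussian random variable, so that $\bE|Z(t,\xi)|^p\le C_p\,\bigl(\bE|Z(t,\xi)|^2\bigr)^{p/2}$; it then suffices to produce a bound on $\bE|Z(t,\xi)|^2$ whose $(p/2)$-th power is integrable over $(0,T)\times\partial D$ with respect to ${\rm d}t\,\sigma({\rm d}\xi)$ (the bound below will in particular show that the fractional Wiener integral defining $Z(t,\xi)$ makes sense for every $\xi\in\partial D$). Adaptedness in $t$ is automatic, since the integrand $p_N(t-s,\xi,\bar y)\alpha(\sigma,\bar y)$ is deterministic. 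Write, as in the proof of Proposition~\ref{cont_convoluzione}, $\phi(t,\xi,\sigma)=\int_{\partial D}p_N(t,\xi,\bar y)\alpha(\sigma,\bar y)\,{\rm d}\bar y$, and recall that by the Remark following Corollary~\ref{co:stima} the estimate $p_N(t,\xi,\bar y)\le c\,t^{-\mu}|\xi-\bar y|^{2\mu-d}$ holds for $\xi\in\partial D$ as well, for any $\mu\in(\tfrac12,1)$. Running the same steps as in the proof of Proposition~\ref{cont_convoluzione} (the It\^o isometry for the fractional Wiener integral, followed by the Beta-function evaluation of the time integrals, which converges precisely because $\mu<H$), now applied to $Z(t,\xi)$ itself rather than to an increment, gives for every $\mu\in(\tfrac12,H)$
\begin{equation*}
\bE|Z(t,\xi)|^2\le C_{H,\mu}\,t^{2H-2\mu}\int_S\Bigl(\int_{\partial D}|\xi-\bar y|^{2\mu-d}\,\alpha(\sigma,\bar y)\,{\rm d}\bar y\Bigr)^2\mu({\rm d}\sigma).
\end{equation*}

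The heart of the matter is to bound the spatial factor uniformly in $\xi\in\partial D$. By Minkowski's integral inequality, putting $A(\bar y):=\|\alpha(\cdot,\bar y)\|_{L^2(S)}$,
\begin{equation*}
\Bigl(\int_S\Bigl(\int_{\partial D}|\xi-\bar y|^{2\mu-d}\alpha(\sigma,\bar y)\,{\rm d}\bar y\Bigr)^2\mu({\rm d}\sigma)\Bigr)^{1/2}\le\int_{\partial D}|\xi-\bar y|^{2\mu-d}A(\bar y)\,{\rm d}\bar y,
\end{equation*}
and by hypothesis \textbf{(a1')} we have $A\in L^\theta(\partial D)$ with $\theta>\frac{d-1}{2H-1}$. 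Since this inequality is strict and $\mu\mapsto(2\mu-1)\theta$ is continuous, we may fix $\mu\in(\tfrac12,H)$ so close to $H$ that $(2\mu-1)\theta>d-1$; equivalently, writing $\theta'=\theta/(\theta-1)$ for the conjugate exponent (finite since $\theta>1$ for $d\ge2$), $(d-2\mu)\theta'<d-1$. H\"older's inequality then gives
\begin{equation*}
\int_{\partial D}|\xi-\bar y|^{2\mu-d}A(\bar y)\,{\rm d}\bar y\le\Bigl(\int_{\partial D}|\xi-\bar y|^{(2\mu-d)\theta'}\,{\rm d}\bar y\Bigr)^{1/\theta'}\|A\|_{L^\theta(\partial D)},
\end{equation*}
and the first factor is bounded uniformly in $\xi$ because on the smooth compact $(d-1)$-dimensional manifold $\partial D$ the exponent $(2\mu-d)\theta'>-(d-1)$ makes $|\xi-\cdot|^{(2\mu-d)\theta'}$ integrable with a bound independent of $\xi$ (work in finitely many normal charts; the far part of the kernel is bounded).

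Combining the three displays, $\bE|Z(t,\xi)|^2\le C\,t^{2H-2\mu}$ with $C$ independent of $(t,\xi)\in[0,T]\times\partial D$; by the Gaussian property this upgrades, for every $p\ge1$, to $\bE|Z(t,\xi)|^p\le C_p\,t^{p(H-\mu)}$, still uniformly in $\xi$. Since $H-\mu>0$ and $\sigma(\partial D)<\infty$,
\begin{equation*}
\|Z\|_{L^p_W(0,T;L^p(\partial D))}^p=\int_0^T\int_{\partial D}\bE|Z(t,\xi)|^p\,\sigma({\rm d}\xi)\,{\rm d}t\le C_p\,\sigma(\partial D)\int_0^Tt^{p(H-\mu)}\,{\rm d}t<\infty,
\end{equation*}
which proves the lemma. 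The only genuine obstacle, and the only place where the reinforced integrability hypothesis \textbf{(a1')} is actually used, is the uniform bound on the Riesz-type potential $\int_{\partial D}|\xi-\bar y|^{2\mu-d}A(\bar y)\,{\rm d}\bar y$: one must simultaneously keep $\mu>\tfrac12$ (so the boundary singularity of the Poisson kernel is integrable over $\partial D$), $\mu<H$ (so the fractional time integral converges), and $(2\mu-1)\theta>d-1$ (so the spatial H\"older step closes), and the room to satisfy all three at once is precisely the strict gap between $\theta$ and the critical value $\frac{d-1}{2H-1}$.
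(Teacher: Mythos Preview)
Your proof is correct and follows essentially the same route as the paper's: reduce to a uniform bound on $\bE|Z(t,\xi)|^p$, use the kernel estimate \eqref{stima_ok} to separate the time and space contributions, evaluate the time integral via the Beta function for $\mu\in(\tfrac12,H)$, and control the spatial Riesz-type potential via H\"older with the exponent $\theta$ from \textbf{(a1')}. The only cosmetic differences are that you invoke the Gaussianity of the Wiener integral to pass from $L^2$ to $L^p$ (the paper uses a BDG-type inequality, which amounts to the same thing here), and you apply Minkowski before H\"older on the spatial factor whereas the paper applies H\"older directly inside the $\sigma$-integral (this is packaged as Lemma~\ref{prel_lemma}); both yield the same bound $C\|\alpha\|_{L^\theta(\partial D;L^2(S))}^2$. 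Your explicit discussion of why $\mu$ must be chosen close enough to $H$ so that $(2\mu-1)\theta>d-1$ is a welcome clarification that the paper leaves implicit.
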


\begin{proof}
%See \cite[Lemma 2.7]{Barbu2014}.
In order to prove the theorem we have to show that the following quantity is finite:
\begin{align*}
\|\tau(Z)\|^p_{L^p_W(0,T;L^p(\partial D))}
=\int_0^T\mathbb{E}\|\tau(Z)\|^p_{L^p(\partial D)}\, {\rm d}t
%=\int_0^T\mathbb{E} \left|\int_{\partial D}|Z(t, \xi)|^p \, {\rm d} \xi \right| {\rm d}t
= \int_0^T\int_{\partial D}\mathbb{E}|Z(t,\xi)|^p\,{\rm d} \xi \, {\rm d}t.
\end{align*}
So it is sufficient to prove that 
\begin{align*}
\mathbb{E}|Z(t,\xi)|^p\le C < \infty,
\end{align*}
where the constant $C$ does not depend on the variables $t$ and $\xi$.
Using the same notation as in the proof of Proposition \ref{cont_convoluzione}, let us write 
\begin{align*}
Z(t, \xi)=\int_0^t \int_S \phi(t-s, \xi, \sigma) \, {\rm d}B(\sigma, s) = \mathcal{I}(\phi)(t, \xi).
\end{align*}
Recalling now that the Burkholder-Davis-Gundy's inequality reads, for the fractional Brownian motion case, as
\begin{align*}
\mathbb{E}|\mathcal{I}(\phi)(t, \xi)|^p 
\le \left(\int_0^t\int_S
|\left(K^*(\phi(t- \cdot, \xi,\sigma)\pmb{1}_{(0,t)}(\cdot))\right) (s)|^2 \, \mu({\rm d}\sigma) \, {\rm d}s \right)^{\frac p2},
\end{align*}
and using the estimate \eqref{stima_ok}, we get
\begin{align*}
\mathbb{E}|Z(t, \xi)|^p &= \mathbb{E}|\mathcal{I}(\phi)(t, \xi)|^p= \mathbb{E}\left|\int_0^t \int_S\phi(t-s, \xi,\sigma){\rm d}B(\sigma, s)\right|^p \\
&\le  \left(\int_0^t\int_S|\left(K^*(\phi(t- \cdot, \xi,\sigma)\pmb{1}_{(0,t)}(\cdot))\right) (s)|^2\mu({\rm d}\sigma){\rm d}s\right)^{\frac p2}\\
&\le \left(\int_0^t \int_S\left|\int_s^t\left(\frac {s}{r}\right)^{\frac12 -H}(r-s)^{H- \frac 32}\phi(t-r, \xi, \sigma){\rm d}r \right|^2 \mu ({\rm d}\sigma) {\rm d} s \right)^{\frac p 2}\\
&=\left(\int_0^t \int_S\left|\int_s^t\left(\frac {s}{r}\right)^{\frac12 -H}(r-s)^{H- \frac 32}\left( \int_{\partial D}p_N(t-r, \xi, \bar{y})\alpha(\sigma, \bar{y})\, {\rm d}\, \bar{y}\right){\rm d}r \right|^2 \mu ({\rm d}\sigma) {\rm d} s \right)^{\frac p 2}\\
&\le \left(\int_0^t \int_S\left|\int_s^t\left(\frac {s}{r}\right)^{\frac12 -H}(r-s)^{H- \frac 32}\left( \int_{\partial D}c(t-r)^{- \mu} |\xi-\bar y|^{2 \mu-d}\alpha(\sigma, \bar{y}){\rm d} \bar{y}\right){\rm d}r \right|^2 \mu ({\rm d}\sigma) {\rm d} s \right)^{\frac p 2}\\
&=c\left(\int_0^t\left(\int_S\left(\int_{\partial D}|\xi-\bar y|^{2 \mu-d}\alpha(\sigma, \bar{y}) {\rm d}\bar{y}\right)^2\mu({\rm d}\sigma)\right)\left|\int_s^t\left(\frac{s}{r}\right)^{\frac 12 -H}(r-s)^{H- \frac 32}(t-r)^{- \mu}{\rm d}r\right|^2{\rm d}s \right)^{\frac p2}\\
&=c\left(\int_S\left(\int_{\partial D}|\xi-\bar y|^{2 \mu-d}\alpha(\sigma, \bar{y}) {\rm d}\bar{y}\right)^2\mu({\rm d}\sigma)\right)^{\frac p2}\left(\int_0^t\left|\int_s^t\left(\frac{s}{r}\right)^{\frac 12 -H}(r-s)^{H- \frac 32}(t-r)^{- \mu}{\rm d}r\right|^2{\rm d}s \right)^{\frac p2}\\
\end{align*}
With the same computations used in the proof of Proposition \ref{cont_convoluzione}, we recover the following estimate for the time variable integral
%second term in the product above:
$$ \left(\int_0^t\left|\int_s^t\left(\frac{s}{r}\right)^{\frac 12 -H}(r-s)^{H- \frac 32}(t-r)^{- \mu}{\rm d}r\right|^2{\rm d}s \right)^{\frac p2}\le C_{\mu, H, p}T^{p(H-\mu)}$$
for $\mu \in (\frac 12, H)$.

%using the H\"{o}lder inequality with $q$ satisfying \eqref{q_hyp}%$q>\frac{d-1}{2H-1}>d-1$
%, we get
%\begin{multline*}
%\left(\int_S\left(\int_{\partial D}|\xi-\bar y|^{2 \mu-d}\alpha(\sigma, \bar{y}) {\rm d}\bar{y}\right)^2\mu({\rm d}\sigma)\right)^{\frac p2}\\
%\le \left(\int_S \left(\int_{\partial D}|\xi- \bar{y}|^{p(2 \mu-d)}{\rm d}\bar{y} \right)^{\frac 2p}\left(\int_{\partial D}|\alpha(\sigma, \bar{y})|^q {\rm d}\bar{y}\right)^{\frac 2q}\mu({\rm d}\sigma)\right)^{\frac p2}\\
%\le C \left(\int_S\left(\int_{\partial D}|\alpha(\sigma, \bar{y})|^q {\rm d}\bar{y}\right)^{\frac 2q}\mu({\rm d}\sigma)\right)^{\frac p2} \le C \left( \int_S\int_{\partial D}|\alpha( \sigma, \bar {y})|^q {\rm d}\bar{y} \ \mu({\rm d}\sigma)\right)^{\frac pq}=C \|\alpha\|^p_{L^q(\partial D \times S)}
%\end{multline*}
%Notice that the condition imposed on $\alpha$ is needed in order to apply Lemma \ref{integral} and so have the boundedness of the boundary integral.
As regards the space variable integral, using Lemma \ref{prel_lemma}, we get
$$\left(\int_S\left(\int_{\partial D}|\xi-\bar y|^{2 \mu-d}\alpha(\sigma, \bar{y}) {\rm d}\bar{y}\right)^2\mu({\rm d}\sigma)\right)^{\frac p2}
\le C \|\alpha\|^p_{L^{\theta}(\partial D;L^2(S))}.
$$
Hence we have 
$$\mathbb{E}|Z(t, \xi)|^p \le C_{\mu, p, H}T^{p(H- \mu)}\|\alpha\|^p_{L^{\theta}(\partial D;L^2(S))}.$$
Since the constant does not depend on $t$ and $\xi$ we get the thesis.
\end{proof}

\begin{remark}
\label{rem3}
Let us notice that we have the continuity in space of the convolution process only inside the domain $D$. For what concerns the spatial regularity of the process on the boundary we can only give a meaning of its trace as an element of $L^p(\partial D)$. So, on the boundary, we do not have enough regularity in order to evaluate the process pointwise in space. Anyway, as pointed out in the Introduction, in the following we will consider $Z(t,\xi)$ for $\xi \in \partial D$ (and then $u(t, \xi)$), 
%we point out that this is an improper notation and, fixed $\omega$, by $Z(t,\xi)$we mean
remembering that these are only representative elements in the class $L^p(\partial D))$, and then all the stated results are valid only a.e. in space.
\end{remark}

\subsection{Malliavin derivative of the convolution process and existence of a density}

We next analyze the Malliavin derivative of the process $Z(t,x)$
\begin{align*}
D_{s,\sigma} Z(t,x)  = \int_{\partial D} p_N(t-r,x,y) \alpha(\sigma, \bar y){\mathds 1}_{\left[0,t\right]}(r) {\rm d}\bar y.
\end{align*}

As explained in Remark \ref{rem3}, we can study pointwise in space the Malliavin derivative of the random variable $Z(t, x)$ for every $x$ inside the domain. 
But, as a necessary step, we need to study the Malliavin derivative computed for points on the boundary, and this study has to be interpreted in an a.e. sense.

\begin{lemma}
\label{lem_5}
Under assumption \textbf{(a1')}, the random variable $Z(t,x)$ belongs to $\bD^{1,2}$ for any $(t,x) \in (0,T) \times \bar{D}$.
%%\bR^{d}_+$.
%{\color{red}{ assuming that respectively \textbf{(a1)} and  hold for $x \in \mathring{D}$ and $x \in \partial{D}$.}}
\end{lemma}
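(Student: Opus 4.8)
The plan is to show that $Z(t,x)$ lies in $\bD^{1,2}$ by verifying directly that it belongs to $L^2(\Omega)$ and that its (formally computed) Malliavin derivative is a well-defined element of $L^2(\Omega;\cH)$, then invoking the closability of the derivative operator via Proposition \ref{p3}. First I would recall from Remark \ref{regolar_Z} that, under \textbf{(a1)} (and a fortiori under \textbf{(a1')}), $Z(t,x)\in L^2(\Omega)$ for every $(t,x)\in[0,T]\times\bar D$; in fact under \textbf{(a1')} Lemma \ref{l6.X} gives the uniform bound $\bE|Z(t,\xi)|^p\le C$ also for $\xi\in\partial D$. So the only real content is the estimate of $\bE\|D Z(t,x)\|_\cH^2$.

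Next I would approximate: write $Z(t,x)=\cI(\phi(t-\cdot,x,\cdot))$ with $\phi(t,x,\sigma)=\int_{\partial D}p_N(t,x,\bar y)\alpha(\sigma,\bar y)\,{\rm d}\bar y$, and approximate $\phi$ by step processes $\phi_n$ so that $Z_n(t,x):=\cI(\phi_n)$ are cylindrical, converge to $Z(t,x)$ in $L^2(\Omega)$, and have Malliavin derivative $D_{s,\sigma}Z_n(t,x)=\phi_n(t-s,x,\sigma)\uno_{[0,t]}(s)$. Since for a first-order Wiener integral the $\cH$-norm of the derivative is exactly $\|\phi_n\|_\cH$ and $\bE|Z_n|^2=\|\phi_n\|_\cH^2$, it suffices to show $\sup_n\|\phi_n\|_\cH<\infty$, which follows once $\phi(t-\cdot,x,\cdot)\uno_{(0,t)}(\cdot)\in\cH$. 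Using the isometry through $K^*$ as in the proofs of Proposition \ref{cont_convoluzione} and Lemma \ref{l6.X},
\begin{align*}
\|\phi(t-\cdot,x,\cdot)\uno_{(0,t)}\|_\cH^2
=\int_0^t\int_S\left|\int_s^t\left(\tfrac sr\right)^{\frac12-H}(r-s)^{H-\frac32}\phi(t-r,x,\sigma)\,{\rm d}r\right|^2\mu({\rm d}\sigma)\,{\rm d}s.
\end{align*}
Plugging in the kernel bound \eqref{stima_ok}, $|\phi(t-r,x,\sigma)|\le c(t-r)^{-\mu}\int_{\partial D}|x-\bar y|^{2\mu-d}\alpha(\sigma,\bar y)\,{\rm d}\bar y$ for some $\mu\in(\tfrac12,H)$, one separates the time integral $\int_0^t|\int_s^t(\tfrac sr)^{\frac12-H}(r-s)^{H-\frac32}(t-r)^{-\mu}{\rm d}r|^2{\rm d}s\le C_{H,\mu}T^{2(H-\mu)}$ (the beta-function computation already done in the earlier proofs) from the space factor $\int_S(\int_{\partial D}|x-\bar y|^{2\mu-d}\alpha(\sigma,\bar y)\,{\rm d}\bar y)^2\mu({\rm d}\sigma)$; the latter is finite by Lemma \ref{prel_lemma} under \textbf{(a1')}, and is in fact bounded by $C\|\alpha\|^2_{L^\theta(\partial D;L^2(S))}$ uniformly in $x\in\bar D$. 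Hence $\sup_n\|Z_n(t,x)\|_{1,2}^2=\sup_n(\bE|Z_n|^2+\bE\|DZ_n\|_\cH^2)<\infty$, and Proposition \ref{p3} (with $k=1$, $p=2$) yields $Z(t,x)\in\bD^{1,2}$, with $D_{s,\sigma}Z(t,x)=\int_{\partial D}p_N(t-s,x,\bar y)\alpha(\sigma,\bar y)\uno_{[0,t]}(s)\,{\rm d}\bar y$ as the weak limit.

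I expect the main obstacle to be purely bookkeeping: for $x$ on the boundary the pointwise-in-space interpretation must be carried along carefully (the estimate is uniform and a.e., as in Remark \ref{rem3}), and one must make sure the singularity $|x-\bar y|^{2\mu-d}$ on $\partial D$ is genuinely integrated against $\alpha$ in the $L^\theta(\partial D;L^2(S))$ sense so that Lemma \ref{prel_lemma} applies — this is exactly where \textbf{(a1')} rather than \textbf{(a1)} is needed. For $x$ strictly inside $D$ the kernel is bounded and \textbf{(a1)} alone would suffice, but stating the lemma under \textbf{(a1')} covers both cases uniformly.
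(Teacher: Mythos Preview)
Your proof is correct and follows essentially the same approach as the paper: reduce to showing that the deterministic integrand $\phi(t-\cdot,x,\cdot)\uno_{(0,t)}$ lies in $\cH$, apply the kernel bound \eqref{stima_ok}, separate the time and space integrals, and control the spatial factor via Lemma \ref{prel_lemma}. The only cosmetic differences are that the paper computes the $\cH$-norm via the bilinear form $\int_0^t\int_0^t|s-r|^{2H-2}(\cdots)\,{\rm d}r\,{\rm d}s$ rather than the $K^*$ isometry, and it treats the cases $x\in\mathring D$ (where $|x-\bar y|$ is bounded below and \textbf{(a1)} alone suffices) and $x\in\partial D$ (where Lemma \ref{prel_lemma} and hence \textbf{(a1')} is needed) separately rather than invoking Lemma \ref{prel_lemma} uniformly.
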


\begin{proof}
%{\color{red}{vedere...}}
In view of  Lemma \ref{l6.X}, Proposition \ref{cont_convoluzione}, and Remark \ref{regolar_Z}, 
it remains to prove that the Malliavin derivative belongs to $\cH$.
\\
%{\color{blue}{
%Let us first consider the case where $x \in \mathring{D}$.\\
%}}
The key point in the following computation is provided by the estimate
\eqref{stima_ok}
\begin{align*}
p_N(t,x,\bar y) \le C \, t^{-\mu} \, |x-\bar y|^{-d+2\mu} 
\end{align*}
\\
We get
\begin{align*}
\|D \, Z(t,x)\|^2_{\cH} =& \int_0^t \int_0^t \int_S D_{s,\sigma}Z(t,x) \, D_{r,\sigma}Z(t,x) \, |s-r|^{2H-2} \, \mu({\rm d}\sigma) \, {\rm d}r \, {\rm d}s
\\
=& \int_0^t \int_0^t \int_S \left( \int_{\partial D} p_N(t-s,x,\bar y) \alpha(\sigma,\bar y) \, {\rm d}\bar y \right) \, \left( \int_{\partial D} p_N(t-r,x,\bar y) \alpha(\sigma,\bar y) \, {\rm d}\bar y \right) \, |s-r|^{2H-2} \, \mu({\rm d}\sigma) \, {\rm d}r \, {\rm d}s
\\
\le& C \, \int_0^t \int_0^t \int_S \left( \int_{\partial D} |x-\bar y|^{-d+2\mu} |\alpha(\sigma,\bar y)| \, {\rm d}\bar y \right)^2 \, (t-s)^{-\mu} (t-r)^{-\mu} \, |s-r|^{2H-2} \, \mu({\rm d}\sigma) \, {\rm d}r \, {\rm d}s
\end{align*}
hence we can separately examine the spatial and the time integrals. First, we have
\begin{align*}
\int_0^t \int_0^t (t-s)^{-\mu} (t-r)^{-\mu} |s-r|^{2H-2} \, {\rm d}r \, {\rm d}s
=& 2 \int_0^t s^{-\mu} \, \left( \int_0^s r^{-\mu}  (s-r)^{2H-2} \, {\rm d}r \right) \, {\rm d}s
\\
=& 2 \int_0^t s^{-\mu}  \left( C_{H,\mu} s^{2H-1-\mu } \right) \, {\rm d}s
= C'_{H,\mu} t^{2H-2\mu}
\end{align*}
In order to handle the spatial term, we have to consider separately the cases $x \in \mathring{D}$ and $x \in \partial D$. 
%\begin{align*}
%\int_S\left(\int_{\partial D} |x- \bar{y}|^{-\lambda} \alpha (\sigma, \bar{y}) {\rm d}\bar{y}\right)^2 \mu({\rm d}\sigma) \le \int_S\left(\int_{\partial D} |x- \bar{y}|^{-p\lambda} {\rm d}\bar{y}\right)^{\frac2p}\left(\int_{\partial D}|\alpha (\sigma, \bar{y})|^q {\rm d}\bar{y}\right)^{\frac2q} \mu({\rm d}\sigma) 
%\end{align*}
%For the first term we have
Let us at first consider the case $x \in \mathring{D}$. 
In this case $\bar y \mapsto |x-\bar y|^{-d+2\mu}$ belongs to $L^2(\partial D)$. 
In fact, denoting 
$\varepsilon = \text{dist}(x,\partial D)$ and $\Gamma = \text{diam}(D)$ the diameter of $D$, then
\begin{align*}
\varepsilon \le |x - \bar y| \le \Gamma
\end{align*}
and so
 \begin{align}
\label{spatial_term}
\int_{\partial D} |x-\bar y|^{ 2(-d+2\mu)} \, {\rm d}\bar y &\le \int_{\partial D} \max\{\varepsilon^{-2d+4\mu},\Gamma^{-2d+4\mu}\} \, {\rm d}\bar y
%\notag\\
%\le |\partial D|\text{dist}(x, \partial D)^{-2d+4\mu}
< +\infty.
\end{align}
Therefore, we get
\begin{align*}
\int_S \left( \int_{\partial D} |x-\bar y|^{-d+2\mu} |\alpha(\sigma,\bar y)| \, {\rm d}\bar y \right)^2 \, \mu({\rm d}\sigma)
%\le C_g \|\alpha\|^2_{L^2( S;L^q(\partial D))}
\le C \, \|\alpha\|^2_{L^2( \partial D \times S)}
\end{align*}

If $x \in \partial D$, thanks to Lemma \ref{prel_lemma} we have that 
\begin{align*}
\int_S \left( \int_{\partial D} |x-\bar y|^{-d+2\mu} |\alpha(\sigma,\bar y)| \, {\rm d}\bar y \right)^2 \, \mu({\rm d}\sigma)
%\le C_g \|\alpha\|^2_{L^2( S;L^q(\partial D))}
\le C \, \|\alpha\|^2_{L^{\theta}( \partial D;L^2(S)}
\end{align*}
which allows to conclude
\begin{align*}
\|D \, Z(t,x)\|^2_{\cH} < +\infty
\end{align*}
as required. 
\end{proof}

The following result is a refinement of the previous Lemma.

\begin{lemma}
\label{lemma14}
The random variable $Z(t,x)$ belongs to $\mathbb{D}^{1,p}$ for any $(t,x) \in (0,T)\times \bar{D}$ and for every $p>2$.
\end{lemma}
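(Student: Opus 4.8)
The plan is to exploit that $Z(t,x)$ belongs to the first Wiener chaos, so that its Malliavin derivative is deterministic and the only substantial estimate needed has already been established in Lemma \ref{lem_5}. Concretely, I would write $Z(t,x)=B(h_{t,x})$ with
\[
h_{t,x}(s,\sigma)=\uno_{(0,t)}(s)\int_{\partial D}p_N(t-s,x,\bar y)\,\alpha(\sigma,\bar y)\,{\rm d}\bar y ,
\]
and observe that the computation carried out in the proof of Lemma \ref{lem_5} is precisely the statement that $h_{t,x}\in\cH$ for every $(t,x)\in(0,T)\times\bar D$ — for $x\in\partial D$ in the a.e.\ sense of Remark \ref{rem3}, the relevant bound being \eqref{stima_ok} combined with Lemma \ref{prel_lemma}, exactly as in Lemma \ref{l6.X}.

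Next I would record the elementary fact that for $F=B(h)$ with $h\in\cH$ one has $D_{s,\sigma}F=h(s,\sigma)$, a \emph{deterministic} element of $\cH$; hence $\bE\,\|DF\|_{\cH}^p=\|h\|_{\cH}^p<\infty$ for every $p\ge1$. Moreover $F$ is a centered Gaussian variable with variance $\|h\|_\cH^2$, so $\bE|F|^p=c_p\,\|h\|_{\cH}^p<\infty$ for all $p\ge2$ (this moment bound, for $F=Z(t,x)$, is also the content of Remark \ref{regolar_Z} in the interior and of Lemma \ref{l6.X} on the boundary). Applying this with $F=Z(t,x)$, $h=h_{t,x}$ shows that the formal norm $\|Z(t,x)\|_{1,p}^p=\bE|Z(t,x)|^p+\bE\,\|DZ(t,x)\|_{\cH}^p=(c_p+1)\,\|h_{t,x}\|_{\cH}^p$ is finite.

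To turn finiteness of the norm into genuine membership in $\bD^{1,p}$ I would approximate: choose step functions $h^{(n)}\in\cE$ with $h^{(n)}\to h_{t,x}$ in $\cH$; each $B(h^{(n)})$ lies in $\bD^{1,p}$ (it is an $L^p$-limit of cylindrical variables $\chi_m(B(h^{(n)}))$ obtained by smoothly truncating the identity), and by the first-chaos computation above $\sup_n\|B(h^{(n)})\|_{1,p}<\infty$ while $B(h^{(n)})\to Z(t,x)$ in $L^p(\Omega)$; Proposition \ref{p3} then yields $Z(t,x)\in\bD^{1,p}$, with $DZ(t,x)=h_{t,x}$. (Equivalently one may simply invoke that $\bD^{1,p}$, being a closure, contains every first-chaos element that is $p$-integrable.)

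There is no genuine obstacle here: the statement is a free upgrade of Lemma \ref{lem_5} once one notes that $Z(t,x)$ is Gaussian, and the passage from $p=2$ to $p>2$ costs only the hypercontractivity constant $c_p$. The single point requiring a little care is, as always, the case $x\in\partial D$, where $Z(t,\xi)$ is defined only for a.e.\ $\xi$ and $\|h_{t,\xi}\|_\cH$ must be controlled uniformly in $\xi$ — but this is exactly what Lemma \ref{prel_lemma} provides and what was already used in Lemmas \ref{l6.X} and \ref{lem_5}.
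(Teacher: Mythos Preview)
Your proposal is correct and follows essentially the same approach as the paper: both arguments hinge on the observation that $DZ(t,x)=h_{t,x}$ is deterministic, so that $\bE\|DZ(t,x)\|_\cH^p=\|DZ(t,x)\|_\cH^p=(\|DZ(t,x)\|_\cH^2)^{p/2}$ is finite by Lemma~\ref{lem_5}. Your write-up is in fact more complete than the paper's, which simply records the deterministic-derivative identity without the approximation step; the additional care you take with membership in $\bD^{1,p}$ via Proposition~\ref{p3} is sound but not strictly necessary given the first-chaos structure.
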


\begin{proof}
We have only to prove that 
\begin{align*}
 \mathbb{E}\|DZ(t,x)\|^p_{\mathcal{H}} < \infty.
\end{align*}
This easily follows from Lemma \ref{lem_5}. In fact
\begin{align*}
\mathbb{E}\|DZ(t,x)\|^p_{\mathcal{H}}=\|DZ(t,x)\|^p_{\mathcal{H}}=\left(\|DZ(t,x)\|^2_{\mathcal{H}}\right)^{\frac p2}
\end{align*}
and so, if $x \in D$ we recover
\begin{align*}
\mathbb{E}\|DZ(t,x)\|^p_{\mathcal{H}}\le \left(C'_{H,\mu}T^{2H-2 \mu}C\|\alpha\|^2_{L^2(\partial D \times S)}\right)^{\frac p2}< \infty,
\end{align*}
whereas for the case $\xi \in \partial D$ we get
\begin{align*}
\mathbb{E}\|DZ(t,\xi)\|^p_{\mathcal{H}}\le \left(C'_{H,\mu}T^{2H-2 \mu}C_{|\partial D|}\right)^{\frac p2}< \infty.
\end{align*}
\end{proof}

%\begin{remark}
%Notice that the condition %$\frac12 < \mu < \min\{H, \frac{d+1}{4}\}$
%$\frac12 < \mu < \min\{H, \frac{d}{2}\}$ implies that our work does not cover the one dimensional case 
%that deserves a different, {\sl ad hoc}, argument.
%\end{remark}
%
%

\begin{lemma}
The stochastic convolution process belongs to $\mathbb{D}^{\infty}$.
\end{lemma}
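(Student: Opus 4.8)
The plan is to exploit the fact that $Z(t,x)$ lives in the first Wiener chaos, so that its iterated Malliavin derivatives of order $\ge 2$ vanish identically and the claim reduces to the $L^p$-bounds already established. Recall that $\mathbb{D}^{\infty}(\mathcal{H})=\bigcap_{p\ge 1}\bigcap_{k\ge 1}\mathbb{D}^{k,p}(\mathcal{H})$, so it suffices to show that $\|Z(t,x)\|_{k,p}<\infty$ for every $k\ge 1$ and every $p\ge 1$.

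First, by Remark \ref{regolar_Z} we have $Z(t,x)\in L^p(\Omega)$ for every $p\ge 2$, hence for every $p\ge 1$, so the term $\bE(|Z(t,x)|^p)$ in the norm $\|F\|_{k,p}$ is finite. Next, since $Z(t,x)=\mathcal{I}(\phi(t-\cdot,x,\cdot)\mathds{1}_{(0,t)})$ is a Wiener integral of a deterministic integrand, with $\phi(t,x,\sigma)=\int_{\partial D}p_N(t,x,\bar y)\alpha(\sigma,\bar y)\,{\rm d}\bar y$, its Malliavin derivative $D_{s,\sigma}Z(t,x)=\phi(t-s,x,\sigma)\mathds{1}_{(0,t)}(s)$ is deterministic and all iterated derivatives $D^{j}Z(t,x)$ with $j\ge 2$ are zero; consequently $\bE(\|D^{j}Z(t,x)\|^{p}_{\mathcal{H}^{\otimes j}})=0$ for $j\ge 2$. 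For $j=1$, Lemma \ref{lemma14} (which uses assumption \textbf{(a1')}) gives $\bE(\|DZ(t,x)\|^{p}_{\mathcal{H}})=\|DZ(t,x)\|^{p}_{\mathcal{H}}<\infty$ for every $p>2$, hence for every $p\ge 1$. Adding these finitely many finite terms yields $\|Z(t,x)\|_{k,p}<\infty$ for all $k\ge 1$ and $p\ge 1$, so $Z(t,x)\in\mathbb{D}^{k,p}(\mathcal{H})$ for all $k,p$, and therefore $Z(t,x)\in\mathbb{D}^{\infty}(\mathcal{H})$.

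There is essentially no genuine obstacle here; the only point that deserves a line of justification is the vanishing of the higher-order derivatives, which follows from the closability of $D^{k}$ together with the fact that $Z(t,x)$ is approximated in every $\mathbb{D}^{k,p}$ by cylindrical random variables $\mathcal{I}(\phi_{n})$ that are affine in the underlying Gaussian family $B$ (whose second and higher derivatives vanish), or equivalently from the chaos-decomposition characterization of the spaces $\mathbb{D}^{k,p}$. One should also recall, as in Lemma \ref{lemma14}, that the bound on $\|DZ(t,x)\|_{\mathcal{H}}$ is obtained differently according to whether $x\in\mathring{D}$ — where \eqref{spatial_term} controls the spatial integral by $\|\alpha\|^{2}_{L^{2}(\partial D\times S)}$ — or $x\in\partial D$ — where Lemma \ref{prel_lemma} and the stronger integrability \textbf{(a1')} are needed — but in both cases the resulting constant is finite, so the conclusion is unaffected.
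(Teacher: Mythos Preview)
Your proof is correct and follows essentially the same approach as the paper: both argue that since $Z(t,x)$ is the Wiener integral of a deterministic element of $\cH$, its first Malliavin derivative is deterministic and hence all higher-order derivatives vanish, so membership in $\bD^\infty$ reduces to the $L^p$ and $\bD^{1,p}$ bounds already established. The paper's proof is a one-line remark to this effect (``$D\delta(u)=u$ for deterministic $u$, so higher derivatives vanish''), whereas you spell out the argument in full and explicitly invoke Remark~\ref{regolar_Z} and Lemma~\ref{lemma14}; the substance is the same.
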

\begin{proof}
Simply notice that $D\delta(u)=u$ for any deterministic function $u \in \mathcal{H}$, where we use the notation $\delta(u)$ for the Wiener integral. Therefore, higher order derivatives vanishes and the thesis follows.
\end{proof}

We finally prove the existence of the density of the random variable $Z(t,x)$ with respect to the Lebesgue measure on $\mathbb{R}$. We shall use the criterion for absolute continuity stated in Theorem \ref{theo:dens}.

\begin{lemma}
\label{esis_dens_Z}
Under the non-degeneracy hypothesis \textbf{(a2)}, the random variable $Z(t,x)$, $(t, x) \in \left[0,T\right] \times D$ has a smooth density with respect to the Lebesgue measure on $\mathbb{R}$.
\end{lemma}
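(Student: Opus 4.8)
The plan is to apply the criterion in Theorem \ref{theo:dens} with $n=1$ to the scalar random variable $F = Z(t,x)$ for fixed $(t,x) \in [0,T] \times D$. By the preceding lemmas we already know that $Z(t,x) \in \bD^{1,2}$ (Lemma \ref{lem_5}), indeed $Z(t,x) \in \bD^{\infty}$, so hypothesis (1) of Theorem \ref{theo:dens} holds and moreover all the negative moments of the Malliavin ``matrix'' that are needed for smoothness are finite once we have a suitable deterministic lower bound. Thus the whole proof reduces to a single estimate: showing that the (scalar) Malliavin matrix
\begin{align*}
\Gamma = \|D Z(t,x)\|^2_{\cH}
\end{align*}
is strictly positive, in fact bounded below by a strictly positive \emph{deterministic} constant. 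Since $Z(t,x)$ is a Wiener integral, $D_{s,\sigma} Z(t,x) = \phi(t-s,x,\sigma)\uno_{(0,t)}(s)$ is deterministic, so $\Gamma$ is itself deterministic and the condition $\bE(|\Gamma|^{-p}) < \infty$ for all $p$ is automatic as soon as $\Gamma > 0$; this simultaneously gives existence of the density and its smoothness.

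The key step is therefore to bound $\|DZ(t,x)\|^2_{\cH}$ from below. Writing out the $\cH$-norm for $H > \tfrac12$,
\begin{align*}
\|DZ(t,x)\|^2_{\cH} = c_H \int_S \int_0^t \int_0^t |s-r|^{2H-2}\, \phi(t-s,x,\sigma)\, \phi(t-r,x,\sigma)\, {\rm d}s\, {\rm d}r\, \mu({\rm d}\sigma),
\end{align*}
where $\phi(t-s,x,\sigma) = \int_{\partial D} p_N(t-s,x,\bar y)\,\alpha(\sigma,\bar y)\,{\rm d}\bar y$. First I would use the non-degeneracy assumption \textbf{(a2)}, namely $\alpha(\sigma,\bar y) \ge \alpha_0 > 0$, together with the positivity of the Poisson kernel, to get the pointwise lower bound $\phi(t-s,x,\sigma) \ge \alpha_0 \int_{\partial D} p_N(t-s,x,\bar y)\,{\rm d}\bar y > 0$; a cleaner route is to invoke the Gaussian lower bound for $p_N$ from Lemma \ref{lower}, which gives $\int_{\partial D} p_N(t-s,x,\bar y)\,{\rm d}\bar y \ge c(t-s)^{-d/2}\int_{\partial D} \exp(-C_2 |x-\bar y|^2/(t-s))\,{\rm d}\bar y$, and this quantity stays bounded away from $0$ for $s$ in a fixed subinterval $[t/2, t]$ (say), uniformly, since $x$ is at fixed positive distance from $\partial D$ and $\partial D$ has positive surface measure. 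Consequently $\phi(t-s,x,\sigma) \ge \kappa > 0$ for $s \in [t/2,t]$ with $\kappa$ deterministic.

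With a uniform positive lower bound on $\phi$ over a time interval of positive length, the positivity of $\Gamma$ follows from the positive-definiteness of the kernel $(s,r) \mapsto |s-r|^{2H-2}$: restricting the double integral to $[t/2,t]^2$ and bounding $\phi(t-s,x,\sigma)\phi(t-r,x,\sigma) \ge \kappa^2$ there gives
\begin{align*}
\|DZ(t,x)\|^2_{\cH} \ge c_H \, \kappa^2 \, \mu(S) \int_{t/2}^{t}\int_{t/2}^{t} |s-r|^{2H-2}\,{\rm d}s\,{\rm d}r = C_{H}\,\kappa^2\,\mu(S)\, (t/2)^{2H} > 0.
\end{align*}
(For $H=1/2$ the same works with the kernel replaced by the Lebesgue measure, i.e.\ one just bounds $\int_{t/2}^t \phi^2\,{\rm d}s$ from below.) Since the right-hand side is a strictly positive deterministic number, $\Gamma$ is invertible a.s.\ and has all negative moments finite, so Theorem \ref{theo:dens} yields that $Z(t,x)$ has a smooth density on $\bR$. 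The main obstacle I anticipate is not conceptual but technical: making the lower bound on $\int_{\partial D} p_N(\cdot,x,\bar y)\,{\rm d}\bar y$ genuinely uniform in the relevant range of the time variable and locally uniform in $x \in D$ — this is where Lemma \ref{lower} (valid only for $t \le T$ small) must be used carefully, possibly after shrinking $T$ or splitting off the behaviour near $s=t$, and where one must keep track that the constant depends on $\mathrm{dist}(x,\partial D)$ but not on anything random.
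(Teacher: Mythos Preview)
Your strategy is exactly the paper's: since $DZ(t,x)$ is deterministic, everything reduces to a strictly positive deterministic lower bound on $\|DZ(t,x)\|^2_{\cH}$, obtained by combining the non-degeneracy hypothesis \textbf{(a2)} with the Gaussian lower bound on $p_N$ from Lemma~\ref{lower} and then restricting the double time-integral to a subinterval.

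There is, however, one concrete error in the execution. You restrict to $s \in [t/2,t]$ and assert that $\phi(t-s,x,\sigma)$ stays bounded away from zero there. This fails: for $x \in \mathring D$ and $\bar y \in \partial D$ one has $|x-\bar y| \ge \mathrm{dist}(x,\partial D) > 0$, so the Gaussian upper bound gives
\[
p_N(t-s,x,\bar y) \le C\,(t-s)^{-d/2}\exp\!\bigl(-c\,\mathrm{dist}(x,\partial D)^2/(t-s)\bigr) \longrightarrow 0 \quad\text{as } s \uparrow t,
\]
whence $\phi(t-s,x,\sigma)\to 0$. The singular endpoint is $s=t$, not $s=0$; the subinterval must keep $t-s$ bounded \emph{away} from zero, e.g.\ $[0,t/2]$. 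This is exactly what the paper does: it fixes $\varepsilon^2 = t/2$, works on $\{s < t-\varepsilon^2\}=[0,t/2]$, restricts the spatial integral to $\partial D\cap B(x,\varepsilon)$ to control the exponential from below, and obtains $\|DZ(t,x)\|^2_{\cH} \ge \tilde C\, t^{2H-1}$. With this single swap of interval (you even flag ``splitting off the behaviour near $s=t$'' as the expected obstacle, but then choose the interval that contains it) your argument becomes complete and coincides with the paper's.
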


\begin{proof}
The thesis follows once we provide a constant $\delta$ that satisfies the estimate
$$\|DZ(t,x)\|^2_{\mathcal{H}} \ge \delta >0 \qquad \text{a.s.}$$
So we compute
\begin{align*}
\|D \, Z(t,x)\|^2_{\cH} =& \int_0^t \int_0^t \int_S D_{s,\sigma}Z(t,x) \, D_{r,\sigma}Z(t,x) \, |s-r|^{2H-2} \, \mu({\rm d}\sigma) \, {\rm d}r \, {\rm d}s
\\
=& \int_0^t \int_0^t \int_S \left( \int_{\partial D} p_N(t-s,x,\bar y) \alpha(\sigma,\bar y) \, {\rm d}\bar y \right) \, \left( \int_{\partial D} p_N(t-r,x,\bar y) \alpha(\sigma,\bar y) \, {\rm d}\bar y \right) \, |s-r|^{2H-2} \, \mu({\rm d}\sigma) \, {\rm d}r \, {\rm d}s
\\
&\ge C_1^2 \int_0^t \int_0^t (t-s)^{-\frac d2} (t-r)^{- \frac d2} |s-r|^{2H-2} \times
\\
&\qquad \times \quad \left[\int_S\left(\int_{\partial D} e^{-c_2 \frac{|x-\bar{y}|^2}{t-s}} \alpha(\sigma, \bar{y}){\rm d}\bar{y} \right)\left(\int_{\partial D}e^{-c_2 \frac{|x-\bar{y}|^2}{t-r}} \alpha(\sigma, \bar{y}){\rm d}\bar{y} \right)\mu({\rm d}\sigma)\right] 
{\rm d}r \, {\rm d}s
\end{align*}
where we have used the lower bound estimate for the Poisson kernel (see Lemma \ref{lower}). Let us now fix $\varepsilon>0$ and let $s< t- \varepsilon^2$. %Moreover let us assume that $\alpha \ge \alpha_0>0$.
We then have the following estimate
%$$\inf_{\text{B}(x, \frac{\epsilon}{2})}\int_
\begin{align*}
\int_{\partial D}e^{-c_2\frac{|x- \bar{y}|^2}{t-s}}\alpha(s, \bar{y}){\rm d}\bar{y} &\ge \int_{\partial D \cap B(x, \varepsilon)}e^{-c_2\frac{|x- \bar{y}|^2}{t-s}}\alpha(s, \bar{y}) {\rm d}\bar{y}\\
&\ge c\alpha_0 \varepsilon^{d-1} e^{-\frac{\varepsilon^2}{t-s}} \ge c\alpha_0 \varepsilon^{d-1} e^{-c_2} =C_{\alpha_0,c_2} \varepsilon^{d-1}
\end{align*}
Choosing $\varepsilon^2=\frac t2$ we then have
\begin{align*}
\|D \, Z(t,x)\|^2_{\cH} \ge  
%\frac{C_1^2}{2}|S| 
&C_{\alpha_0,c_2,c_1,|S|}t^{d-1} \int_0^{\frac t2} \int_0^{\frac t2} (t-s)^{-\frac d2} (t-r)^{- \frac d2} |s-r|^{2H-2} {\rm d}r {\rm d}s\\
&=C_{\alpha_0,c_2,c_1,|S|}t^{d-1} \int_0^{\frac t2} \int_0^{\frac t2} (\sigma + \frac t2)^{-\frac d2} (\rho+\frac t 2)^{- \frac d2} |\sigma- \rho|^{2H-2} {\rm d}\sigma {\rm d}\rho\\
&\ge C_{\alpha_0,c_2,c_1,|S|}t^{-1} \int_0^{\frac t2} \int_0^{\frac t2}  |\sigma- \rho|^{2H-2} {\rm d}\sigma {\rm d}\rho \ge \tilde{C} t^{2H-1} >0
\end{align*}
as required.
\end{proof}

\section{The solution of the nonlinear problem}% and its regularity}
\label{Eos}
We consider in this section the non homogeneous diffusion equation \eqref{e1}. 
In the first part of the section we impose condition {\bf (g1)} on the function $g$:
\begin{multline}\label{eq:ass1}
\text{$g$ is Lipschitz continuous and belongs to the class $C^1(\mathbb{R})$ with $|\partial_ug(u)|\le L,$}\notag \\
\text{$g(u) \le c(1+|u|)$ for some finite constant $L > 0$. }
\end{multline}
%In the sequel, in order to improve the regularity of the solution, we shall introduce the following additional condition, which substitutes \eqref{eq:ass1}:
%\begin{align}\label{eq:ass2}
%\text{$g$ belongs to the class $C^{\infty}(\mathbb{R})$ with $|\partial_u^ng(u)|\le L.$} 
%\end{align}
%for any $n\ge 1$ and some $L< \infty$.
%
%Moreover we will assume that $\alpha \in L^2(S \times \partial D)$ and there exists $\alpha_0>0$ such that $\alpha(\sigma, x) \ge \alpha_0$.

\subsection{Existence and global regularity of the solution}

While dealing with the problem of the existence and uniqueness of the solution, we shall apply the same ideas of \cite[Section 3.2]{Barbu2014}.
We first consider the Volterra equation on $\left[0,T\right] \times \partial D$,
\begin{equation}
\label{eq_bordo}
u(t, \xi)=\int_0^t \int_{\partial D} p_N(t-s,\xi, \bar{y})g(u(s, \bar{y})) \, {\rm d}\bar{y} \, {\rm d}s
+ \int_0^t \int_S \int_{\partial D} p_N(t-s,\xi,\bar{y}) \alpha(\sigma, \bar{y}) \, {\rm d} \bar y \, {\rm d}B(\sigma, s).
\end{equation}

\begin{theorem}
Assume that Hypothesis \textbf{(g1)} holds. Then equation \eqref{eq_bordo} has a unique solution in the space $L^p_W(\left[0,T\right];L^p(\partial D)).$
\end{theorem}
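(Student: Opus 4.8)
The plan is to obtain $u$ as the unique fixed point of the affine map
\[
\Phi(u)(t,\xi) := \int_0^t\int_{\partial D} p_N(t-s,\xi,\bar y)\,g(u(s,\bar y))\,{\rm d}\bar y\,{\rm d}s \;+\; Z(t,\xi)
\]
on the Banach space $\cX := L^p_W(0,T;L^p(\partial D))$, where $Z(t,\xi)$ is the boundary trace of the stochastic convolution. By Lemma \ref{l6.X} the second summand is a fixed element of $\cX$, and since $p_N$ is deterministic while $Z(t,\cdot)$ is $\cF_t$-measurable, $\Phi$ sends adapted processes to adapted processes; hence it is enough to control $L^p(\Omega;L^p(\partial D))$ norms.

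First I would check that $\Phi(\cX)\subset\cX$. Fix $\mu\in(\tfrac12,1)$. Corollary \ref{co:stima} gives $p_N(t-s,\xi,\bar y)\le c\,(t-s)^{-\mu}|\xi-\bar y|^{2\mu-d}$, and since $\partial D$ is a compact $(d-1)$-dimensional manifold and $2\mu-d>-(d-1)$, the map $\bar y\mapsto|\xi-\bar y|^{2\mu-d}$ belongs to $L^1(\partial D)$ uniformly in $\xi$ (a bound of the type in Lemma \ref{prel_lemma}). Minkowski's integral inequality in $L^p(\partial D)$ together with a Schur-type bound for the spatial integral against this $L^1$ kernel yields
\[
\big\|\Phi(u)(t)-Z(t)\big\|_{L^p(\partial D)} \le C\int_0^t (t-s)^{-\mu}\,\big\|g(u(s,\cdot))\big\|_{L^p(\partial D)}\,{\rm d}s .
\]
The linear growth bound in \textbf{(g1)} gives $\|g(u(s,\cdot))\|_{L^p(\partial D)}\le L\big(|\partial D|^{1/p}+\|u(s)\|_{L^p(\partial D)}\big)$; taking $L^p(\Omega)$ norms (Minkowski once more), integrating in $t$ and using $(t-s)^{-\mu}\in L^1(0,t)$ because $\mu<1$, together with $Z\in\cX$ from Lemma \ref{l6.X}, one concludes $\|\Phi(u)\|_{\cX}<\infty$.

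For existence and uniqueness I would show that a power of $\Phi$ is a contraction. Set $w(t):=\bE\|u(t)-v(t)\|^p_{L^p(\partial D)}\in L^1(0,T)$. Using the Lipschitz bound $|g(a)-g(b)|\le L|a-b|$ from \textbf{(g1)}, the same kernel estimates as above, and Hölder's inequality to bound the $p$-th power of $\int_0^t(t-s)^{-\mu}(\cdots)\,{\rm d}s$ (with the finite total mass $\int_0^t(t-s)^{-\mu}{\rm d}s\le T^{1-\mu}/(1-\mu)$ serving as the Jensen weight), one gets
\[
\bE\|(\Phi u-\Phi v)(t)\|^p_{L^p(\partial D)} \le A\int_0^t (t-s)^{-\mu}\,\bE\|u(s)-v(s)\|^p_{L^p(\partial D)}\,{\rm d}s
\]
for a constant $A=A(C,L,T,p,\mu)$. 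Iterating this linear singular Volterra inequality $n$ times and integrating over $[0,T]$, the $n$-fold convolution of $t^{-\mu}$ produces
\[
\|\Phi^n u-\Phi^n v\|^p_{\cX} \le \frac{\big(A\,\Gamma(1-\mu)\big)^n\,T^{\,n(1-\mu)}}{\Gamma\big(n(1-\mu)+1\big)}\,\|u-v\|^p_{\cX},
\]
and the coefficient on the right tends to $0$ as $n\to\infty$ since $\Gamma(n(1-\mu)+1)$ grows factorially. Hence $\Phi^n$ is a strict contraction on the complete metric space $\cX$ for all large $n$, and the Banach fixed point theorem (in the version valid when an iterate is a contraction) yields a unique fixed point $u\in\cX$, which is the required unique solution of \eqref{eq_bordo}.

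The only genuine obstacle is the time singularity $(t-s)^{-\mu}$ of the Poisson kernel: a direct contraction on all of $[0,T]$ is unavailable, so one must run the singular Gronwall / iterated-kernel estimate above and keep track of the Gamma-function constants. The spatial side is harmless once one observes that the admissible range $\mu\in(\tfrac12,1)$ simultaneously makes $|\xi-\bar y|^{2\mu-d}$ integrable over $\partial D$ and keeps $(t-s)^{-\mu}$ integrable in time, so a single choice of $\mu$ drives both the self-mapping and the contraction steps.
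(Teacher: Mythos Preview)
Your argument is correct and complete: the self-mapping step via Minkowski plus the Schur/Young bound on the spatial kernel $|\xi-\bar y|^{2\mu-d}\in L^1(\partial D)$ is clean, and the iterated singular Volterra estimate with $\Gamma$-function constants is a standard and rigorous way to dispose of the time singularity $(t-s)^{-\mu}$.

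The paper takes a different (though equally classical) route. Instead of iterating $\Phi$ and showing $\Phi^n$ is a contraction for large $n$, it equips $L^p_W$ with the equivalent weighted norm
\[
\|f\|^p_{L^p_W,\lambda}=\bE\int_0^T e^{-\lambda t}\,\|f(t)\|^p_{L^p(\partial D)}\,{\rm d}t
\]
and shows that $\Phi$ itself is a contraction once $\lambda$ is large. The mechanism is a specific H\"older splitting of the kernel integral that produces a factor $\lambda^{(\mu-1)/(p-1)}$ in front of $\|u-v\|^p_{L^p_W,\lambda}$ (this is the content of Lemma~\ref{l-dopo}). Your approach trades the weight parameter $\lambda$ for the iteration parameter $n$: the exponential weight effectively performs the Laplace-transform analogue of your $n$-fold convolution, and both exploit that $t^{-\mu}\in L^1(0,T)$ for $\mu<1$. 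Practically, the weighted-norm argument is a single step and dovetails with the later Malliavin estimates in the paper (which reuse Lemma~\ref{l-dopo} and the same weight), while your iterated-kernel bound gives explicit, $\lambda$-free constants and makes the Volterra structure transparent. Either method closes the proof with no gaps.
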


\begin{proof}
For the proof we follow the ideas of \cite[Lemma 3.4]{Barbu2014}.
The existence of a solution follows from a fixed point argument. 
Let us introduce in the space $L^p_W(\left[0,T\right];L^p(\partial D))$ the equivalent norm
\begin{equation*}
\|f\|_{L^p_W}= \mathbb{E}\int_0^T e^{-\lambda t}\|f(t)\|^p_{L^p(\partial D)}\, {\rm d}t
\end{equation*} 
%Let us rewrite equation \eqref{eq_bordo} with the following notation
then we introduce the operator 
\begin{equation*}
\Phi(u)(t, \xi)=\int_0^t \int_{\partial D} p_N(t-s,\xi, \bar{y})g(u(s, \bar{y})) \, {\rm d}\bar{y} \, {\rm d}s + Z(t, \xi)
\end{equation*}
and prove that it is a well-defined mapping from the space $L^p_W(\left[0,T\right];L^p(\partial D))$ into itself. Moreover, it is a contraction for some suitable $\lambda$. 
Thanks to Lemma \ref{l6.X}, this will follow from the following estimate

\begin{align*}
\|\Phi(u)-\Phi(v)\|^p_{L^p_W}&=\mathbb{E}\int_0^T e^{-\lambda t} \int_{\partial D}\left|\int_0^t\int_{\partial D} p_N(t-s, \xi, \bar y)\left(g(u(s, \bar y))-g(v(s,\bar y)) \right)\, {\rm d} \bar y \, {\rm d} s\right|^p \, {\rm d} \xi \, {\rm d}t
\\
&\le L^p \mathbb{E}\int_0^T e^{-\lambda t} \int_{\partial D}\left(\int_0^t\int_{\partial D} p_N(t-s, \xi, \bar y)\left|u(s, \bar y)-v(s,\bar y) \right|\, {\rm d} \bar y \, {\rm d} s\right)^p \, {\rm d} \xi \, {\rm d}t
\end{align*}

Now, proceeding exactly as in the proof of Lemma \ref{l-dopo}, 
we recover 
\begin{align*}
\|\Phi(u)-\Phi(v)\|^p_{L^p_W} \le C\lambda^{\frac{\mu-1}{p-1}} \|u-v\|^p_{L^p_W}
\end{align*}

Then we see that there exists $\lambda$ large enough such that $C\lambda^{\frac{\mu-1}{p-1}}< 1 -\varepsilon <1$ and this proves the Theorem.
\end{proof}

Given the process $\varphi \in L^p_W(\left[0,T\right];L^p( \partial D))$ 
that is the solution of problem \eqref{eq_bordo}, the solution of the original problem \eqref{eq:nl} is given by the representation formula 
\begin{equation}
\label{sol_int}
u(t,x)=\int_0^t \int_{\partial D} p_N(t-s,x, \bar{y})g(\varphi(s, \bar{y})) \, {\rm d}\bar{y} \, {\rm d}s
+ \int_0^t \int_S \int_{\partial D} p_N(t-s,x,\bar{y}) \alpha(\sigma, \bar{y}) \, {\rm d}\bar{y} \, {\rm d}B(\sigma, s).
\end{equation}

\begin{corollary}
For every $t \in \left[0,T \right]$ and almost surely, the solution $u(t,x)$ of problem \eqref{eq:nl} is a continuous function for $x \in D$.
\end{corollary}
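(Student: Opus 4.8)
The plan is to prove continuity of $x \mapsto u(t,x)$ inside $D$ for fixed $t$ by splitting the representation formula \eqref{sol_int} into the deterministic convolution term
\[
v(t,x) = \int_0^t \int_{\partial D} p_N(t-s,x,\bar y) g(\varphi(s,\bar y)) \, {\rm d}\bar y \, {\rm d}s
\]
and the stochastic convolution term $Z(t,x)$. For $Z(t,x)$, Proposition \ref{cont_convoluzione} already gives the existence of a version that is continuous (indeed locally H\"older of any exponent $\gamma < 1$) on $D$, so nothing further is needed there. The work is therefore entirely in showing that $v(t,\cdot)$ is continuous on the open set $\mathring D$.

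First I would fix $x \in \mathring D$ and a convex neighbourhood $M$ of $x$ with $\operatorname{dist}(M,\partial D) \ge \varepsilon > 0$, exactly as in the proof of Proposition \ref{cont_convoluzione}. For $z \in M$, I would write
\[
v(t,x) - v(t,z) = \int_0^t \int_{\partial D} \big[ p_N(t-s,x,\bar y) - p_N(t-s,z,\bar y) \big] g(\varphi(s,\bar y)) \, {\rm d}\bar y \, {\rm d}s,
\]
then bound the kernel difference by the mean value theorem and Corollary \ref{c.app1}: for suitable $\mu \in (\tfrac12, H)$,
\[
\big| p_N(t-s,x,\bar y) - p_N(t-s,z,\bar y) \big| \le |z-x| \, \varepsilon^{-(d+1)+2\mu} \, (t-s)^{-(2\mu+1)/2}.
\]
This reduces the estimate to controlling $\int_0^t (t-s)^{-(2\mu+1)/2} \int_{\partial D} |g(\varphi(s,\bar y))| \, {\rm d}\bar y \, {\rm d}s$ times $|z-x|$. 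Using the linear growth $|g(u)| \le L(1+|u|)$ from \textbf{(g1)}, H\"older's inequality in $\bar y$ over the finite-measure set $\partial D$, and the fact that $\varphi \in L^p_W(0,T;L^p(\partial D))$ (so $\int_0^T \mathbb{E}\|\varphi(s)\|_{L^p(\partial D)}^p \, {\rm d}s < \infty$), one gets that the right-hand side is finite almost surely provided the time singularity is integrable, i.e. provided $(2\mu+1)/2 < 1$. The latter forces $\mu < \tfrac12$, which clashes with the requirement $\mu > \tfrac12$ in Corollary \ref{c.app1} — so the naive estimate is exactly the main obstacle, and the argument has to be run more carefully.

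The remedy is to not pull the kernel difference out crudely but to interpolate: bound the difference by the geometric mean of $|p_N(t-s,x,\bar y) - p_N(t-s,z,\bar y)|^\delta$ (estimated via the gradient bound, giving a factor $|z-x|^\delta (t-s)^{-\delta(2\mu+1)/2}$) and $|p_N(t-s,x,\bar y) + p_N(t-s,z,\bar y)|^{1-\delta}$ (estimated via \eqref{stima_ok}, giving $(t-s)^{-\mu'(1-\delta)} |x-\bar y|^{(2\mu'-d)(1-\delta)}$ for another $\mu' \in (\tfrac12,1)$), and then choose $\delta \in (0,1)$ small enough that the combined time exponent $\delta(2\mu+1)/2 + \mu'(1-\delta)$ is strictly less than $1$; since $\mu'$ can be taken just above $\tfrac12$, such a $\delta$ exists. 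On $M$ the spatial factor $|x-\bar y|^{(2\mu'-d)(1-\delta)}$ is bounded by $\max\{\varepsilon, \Gamma\}$ to a fixed power with $\Gamma = \operatorname{diam}(D)$, so after integrating over the finite-measure boundary it is harmless. This yields
\[
\mathbb{E} \, |v(t,x) - v(t,z)|^p \le C_{p,t,\varepsilon,\mu,\mu',\delta} \, |z-x|^{p\delta}
\]
for $p$ large, and then Kolmogorov's continuity theorem (applied on each such ball $M$, then patched over a countable cover of $\mathring D$) gives the desired continuous version of $v(t,\cdot)$, hence of $u(t,\cdot) = v(t,\cdot) + Z(t,\cdot)$, on $D$. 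The main obstacle, as noted, is the incompatibility of the exponent ranges in the two kernel estimates, which is precisely why the interpolation step is needed; everything else is the same bookkeeping already carried out in Proposition \ref{cont_convoluzione} and Lemma \ref{l6.X}.
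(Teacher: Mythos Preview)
The paper states this corollary without proof. Your argument is correct but over-engineered: the time singularity you identify as the ``main obstacle'' is not actually present for interior points, so the interpolation step is unnecessary.

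The issue is that you passed to the polynomial form of the gradient bound (Corollary~\ref{c.app1}) too early. If instead you keep the exponential bound from Lemma~\ref{l.app1},
\[
|\nabla p_N(t-s,w,\bar y)| \le k^{-1}\, e^{-k|w-\bar y|^2/(t-s)}\,(t-s)^{-(d+1)/2},
\]
then for $w$ in your neighbourhood $M$ with $\operatorname{dist}(M,\partial D) \ge \varepsilon$ the factor $e^{-k\varepsilon^2/(t-s)}$ dominates any power of $(t-s)^{-1}$ as $s \to t$, so the right-hand side is uniformly bounded on $(0,t)\times\partial D$ by a constant $C_\varepsilon$. The mean-value estimate then gives directly
\[
|v(t,x) - v(t,z)| \le C_\varepsilon\,|z-x|\int_0^t\int_{\partial D}|g(\varphi(s,\bar y))|\,{\rm d}\bar y\,{\rm d}s,
\]
and the integral is almost surely finite (indeed in every $L^p(\Omega)$) by the linear growth of $g$ from \textbf{(g1)} and the fact that $\varphi \in L^p_W(0,T;L^p(\partial D))$. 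Thus $v(t,\cdot)$ is locally Lipschitz on $\mathring D$ pathwise, with no Kolmogorov argument needed, and adding the continuous version of $Z(t,\cdot)$ from Proposition~\ref{cont_convoluzione} finishes the proof. The polynomial bound in Corollary~\ref{c.app1} is tailored to situations where the spatial distance to the boundary is not bounded below (as in the $K^*$-regularised stochastic convolution of Proposition~\ref{cont_convoluzione}); for the deterministic term with $x$ strictly interior it discards precisely the exponential decay that would have resolved your obstacle for free.
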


\begin{corollary}
There exists a continuous modification $u=\left\{u(t,x), t \in \left[0,T\right] \times D\right\}$ of the solution process \eqref{sol_int}.
\end{corollary}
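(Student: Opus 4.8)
The statement to be proved is that there exists a continuous modification of the solution process $u=\{u(t,x),\ t\in[0,T],\ x\in D\}$ given by \eqref{sol_int}, i.e. joint continuity in $(t,x)$. The natural approach is the Kolmogorov continuity criterion for random fields on $[0,T]\times M$, where $M$ is an arbitrary convex neighbourhood at positive distance $\varepsilon$ from $\partial D$; since $D$ is exhausted by such sets, a continuous modification on $D$ follows by a standard patching argument. For a fixed such $M$, split $u=u^{(1)}+Z$, where $u^{(1)}(t,x)=\int_0^t\int_{\partial D}p_N(t-s,x,\bar y)g(\varphi(s,\bar y))\,{\rm d}\bar y\,{\rm d}s$ is the deterministic-kernel (drift) term and $Z$ is the stochastic convolution. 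Joint continuity of $Z$ in $(t,x)$ is essentially the content of Proposition \ref{cont_convoluzione} together with the classical (easy) time-continuity noted after \eqref{eq:nl2}; I would either invoke those directly or, to get joint continuity cleanly, redo the moment estimate $\bE|Z(t,x)-Z(t',x')|^{2k}\le C(|t-t'|^{\eta}+|x-x'|)^{2k}$ by combining the spatial Lipschitz bound already obtained in the proof of Proposition \ref{cont_convoluzione} with an analogous Hölder-in-time bound coming from the same $K^*$-representation and Corollary \ref{co:stima}.

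For the drift term $u^{(1)}$, the plan is as follows. First I would show $\varphi\in L^p_W(0,T;L^p(\partial D))$ (already established) forces, via the sublinear growth $g(u)\le L(1+|u|)$ in \textbf{(g1)}, that $s\mapsto g(\varphi(s,\cdot))$ lies in $L^p_W(0,T;L^p(\partial D))$ as well. Then for $x,z\in M$ and $t\le t'$ I would estimate
\begin{align*}
\bE\big|u^{(1)}(t,x)-u^{(1)}(t',z)\big|^p
&\le C\,\bE\Big|\int_0^t\!\!\int_{\partial D}\big[p_N(t-s,x,\bar y)-p_N(t'-s,z,\bar y)\big]g(\varphi(s,\bar y))\,{\rm d}\bar y\,{\rm d}s\Big|^p\\
&\quad+C\,\bE\Big|\int_t^{t'}\!\!\int_{\partial D}p_N(t'-s,z,\bar y)g(\varphi(s,\bar y))\,{\rm d}\bar y\,{\rm d}s\Big|^p,
\end{align*}
bounding the kernel differences through Corollary \ref{c.app1} and the gradient estimate \eqref{stima2_der_kernel} for the $x$-increment (exactly as in Proposition \ref{cont_convoluzione}, using $\mathrm{dist}(M,\partial D)=\varepsilon>0$ to control $|x+u(z-x)-\bar y|^{-(d+1)+2\mu}$ uniformly), and through the time-regularity of $p_N$ (again reducible to a power of $|t-t'|$ via Corollary \ref{co:stima}, after handling the extra boundary integral $\int_t^{t'}$ by Hölder in $s$). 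The spatial integrals $\int_{\partial D}|x-\bar y|^{2\mu-d}\,{\rm d}\bar y$ are finite and uniformly bounded on $M$ since $|\partial D|<\infty$ and $\varepsilon\le|x-\bar y|\le\mathrm{diam}(D)$, so one is left with convergent Beta-type time integrals, yielding $\bE|u^{(1)}(t,x)-u^{(1)}(t',z)|^p\le C(|t-t'|^{\kappa}+|z-x|)^p$ for some $\kappa>0$ and for all $p$.

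Combining the two terms gives, for every $p\ge 2$, a bound of the form $\bE|u(t,x)-u(t',z)|^p\le C_p\big(|t-t'|+|x-z|\big)^{\alpha p}$ with $\alpha>0$ fixed; since the space–time parameter set $[0,T]\times M$ has dimension $d+1$, choosing $p$ large enough that $\alpha p>d+1$ and applying Kolmogorov's criterion for random fields produces a modification of $u$ that is jointly (Hölder) continuous on $[0,T]\times M$. Letting $M\uparrow D$ and using that two continuous modifications agree a.s. on overlaps yields the continuous modification on $[0,T]\times D$, which is the claim. The main obstacle is the time-increment estimate for the drift term: the kernel $p_N(t-s,\cdot,\cdot)$ has an integrable singularity at $s=t$, so the $\int_t^{t'}$ piece and the $t$-difference of $p_N$ must be controlled carefully — concretely, one interpolates between the crude bound $\|p_N(t-s,x,\cdot)\|_{L^1(\partial D)}\lesssim(t-s)^{-\mu}$ from Corollary \ref{co:stima} and a $\partial_t p_N$ estimate, trading regularity for a fractional power of $|t-t'|$, which forces the Hölder exponent in time to be strictly less than the exponent in space but still positive, amply sufficient for Kolmogorov once $p\to\infty$.
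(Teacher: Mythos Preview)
The paper states this corollary without proof, leaving it as an implicit consequence of Proposition~\ref{cont_convoluzione}, the existence theorem for $\varphi\in L^p_W(0,T;L^p(\partial D))$, and the remark in the Introduction that time-continuity ``is a classical result and it follows immediately from \eqref{eq:nl2}''. So there is no argument in the paper to compare against beyond those pointers.

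Your plan is a correct and complete way to fill this in, and it is exactly in the spirit of the paper's tools: split $u=u^{(1)}+Z$, redo the Kolmogorov moment bound of Proposition~\ref{cont_convoluzione} for $Z$ but now for joint $(t,x)$-increments, and treat the drift $u^{(1)}$ by the kernel estimates of Corollary~\ref{co:stima} and Corollary~\ref{c.app1} together with $g(\varphi)\in L^p_W(0,T;L^p(\partial D))$. The localisation to $M$ with $\mathrm{dist}(M,\partial D)=\varepsilon>0$ and the final patching are the right device, already used in Proposition~\ref{cont_convoluzione}. The one point that deserves a remark is the time-increment of $u^{(1)}$, which you correctly flag: the paper does not record a $\partial_t p_N$ bound, but the standard parabolic estimate $|\partial_t p_N(\tau,x,\bar y)|\le C\,\tau^{-\mu-1}|x-\bar y|^{2\mu-d}$ follows from the construction in \cite{Friedman1964}, and your interpolation
\[
|p_N(t'-s,x,\bar y)-p_N(t-s,x,\bar y)|\le C\,(t'-t)^{\theta}(t-s)^{-\mu-\theta}|x-\bar y|^{2\mu-d},\qquad 0<\theta<1-\mu,
\]
then gives the needed H\"older-in-time exponent while keeping the $s$-integral convergent. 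With this in hand the rest is exactly as you outline, and Kolmogorov applies once $p$ is taken large enough relative to $d+1$.
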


\subsection{The Malliavin derivative of the solution}

We are concerned with the law of the random variable $u(t,x)$ that represents the solution of problem \eqref{eq:nl} for $t \in (0,T)$ and $x \in D$.
We shall prove first that $u(t,x)$ has a density that is absolutely continuous with respect to the Lebesgue measure.
Later in this section we shall prove, under additional conditions on the coefficients of \eqref{eq:nl}, that this density is smooth ($C^\infty$).

Next result concerns the existence of the Malliavin derivative for the solution $u$. Heuristically, the
Malliavin derivative is the solution of the problem that we get by formally taking derivative in the original problem \eqref{eq:nl2};
however, to make the construction complete, we shall appeal to the Picard's approximations of $u$ and apply Proposition \ref{p3}.

%Proceeding similarly to what concern the existence result for the solution, we first consider the problem of the existence of the Malliavin derivative fixed $(t, \xi)$ in $\left[0,T\right] \times \partial \mathbb{R}^d_+$.
\begin{theorem}
\label{D12_sol}
%For $(t, \xi) \in \left[0,T\right] \times \partial \mathbb{R}^d_+$
Assume that Hypothesis {\bf (g1)} {{and \textbf(a1')}} hold. 
Then the Malliavin derivative of the solution process $Du$ belongs to the space $L^p_{W}(\left(0,T\right) \times \partial D; \mathcal{H})$ and it satisfies the following equation:
\begin{multline}
\label{sol_bordo}
D_{r,\sigma}u(t,\xi)= \int_{\partial D}p_N(t-r,\xi,\bar{y})\alpha(\sigma,\bar{y}){\pmb 1}_{\left[0,t\right]}(r) \, {\rm d}\bar{y} 
\\
+ \int_r^t \int_{\partial D} p_N(t-s,\xi,\bar{y})\partial_ug(u(s,\bar{y}))D_{r,\sigma}u(s,\bar{y}) \, {\rm d}\bar{y} \, {\rm d}s.
\end{multline}
%which admits a unique solution in the space {\color{red}{$\mathcal{H}$%$L^2_W(\left[0,T\right];\mathcal{H})$
%$\mathcal{H}$.%$L^2_{\mathcal{F}}(\left[0,T\right];L^2(\partial D))$
\end{theorem}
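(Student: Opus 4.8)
The plan is to follow the Picard iteration scheme and apply Proposition \ref{p3}, working on the boundary $\partial D$ in the $L^p_W((0,T)\times\partial D)$ setting since this is the space where the boundary equation \eqref{eq_bordo} lives. First I would define the Picard approximations $u^{(0)}(t,\xi)=Z(t,\xi)$ and, recursively,
\begin{equation*}
u^{(n+1)}(t,\xi)=\int_0^t\int_{\partial D}p_N(t-s,\xi,\bar y)g(u^{(n)}(s,\bar y))\,{\rm d}\bar y\,{\rm d}s+Z(t,\xi),
\end{equation*}
and recall from the fixed-point argument already carried out that $u^{(n)}\to\varphi$ in $L^p_W((0,T);L^p(\partial D))$, with uniform bounds $\sup_n\|u^{(n)}\|_{L^p_W}<\infty$. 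The next step is to show by induction on $n$ that each $u^{(n)}(t,\xi)\in\bD^{1,p}(\cH)$ for a.e.\ $(t,\xi)$, with Malliavin derivative satisfying
\begin{equation*}
D_{r,\sigma}u^{(n+1)}(t,\xi)=\int_{\partial D}p_N(t-r,\xi,\bar y)\alpha(\sigma,\bar y){\pmb 1}_{[0,t]}(r)\,{\rm d}\bar y+\int_r^t\int_{\partial D}p_N(t-s,\xi,\bar y)\partial_ug(u^{(n)}(s,\bar y))D_{r,\sigma}u^{(n)}(s,\bar y)\,{\rm d}\bar y\,{\rm d}s.
\end{equation*}
The base case $n=0$ is exactly Lemma \ref{lemma14} (together with Lemma \ref{lem_5} for membership in $\cH$). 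For the inductive step one uses the chain rule for the Malliavin derivative (valid since $g\in C^1$ with bounded derivative by {\bf (g1)}), closability of $D$, and the fact that the deterministic convolution against $p_N$ commutes with $D$.

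The heart of the argument is the uniform bound $\sup_n\bE\|Du^{(n)}(t,\xi)\|_\cH^p<\infty$, uniformly in $(t,\xi)$. Taking $\cH$-norms in the recursion for $D u^{(n+1)}$ and using $|\partial_ug|\le L$, I would estimate
\begin{equation*}
\bE\|Du^{(n+1)}(t,\xi)\|_\cH^p\le C\,\bE\|DZ(t,\xi)\|_\cH^p+C L^p\,\bE\left\|\int_r^t\int_{\partial D}p_N(t-s,\xi,\bar y)D_{r,\sigma}u^{(n)}(s,\bar y)\,{\rm d}\bar y\,{\rm d}s\right\|_\cH^p,
\end{equation*}
where the first term is bounded by Lemma \ref{lemma14}. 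For the second term, one pulls the $\cH$-norm inside via Minkowski's integral inequality, applies the kernel estimate \eqref{stima_ok} $p_N(t-s,\xi,\bar y)\le c(t-s)^{-\mu}|\xi-\bar y|^{2\mu-d}$, uses Lemma \ref{prel_lemma} to control the spatial integral $\int_{\partial D}|\xi-\bar y|^{2\mu-d}\,{\rm d}\bar y$ (here is where {\bf (a1')} enters, analogously to Lemma \ref{l6.X}), and is left with a convolution in time of the type $\int_0^t(t-s)^{-\mu}\sup_\xi\bE\|Du^{(n)}(s,\xi)\|_\cH^p\,{\rm d}s$. A Gronwall-type lemma for weakly singular kernels (i.e.\ iterating the fractional integral operator, exactly as in the proof of Corollary \ref{co:stima} for the $M_k$'s, or equivalently introducing the exponential weight $e^{-\lambda t}$ as in the fixed-point proof) then yields the desired uniform bound.

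Once the uniform bound is in hand, Proposition \ref{p3} gives that $\varphi(t,\xi)\in\bD^{1,p}$ for a.e.\ $(t,\xi)$, with $Du^{(n)}\to D\varphi$ weakly, and passing to the limit in the recursion (using $\partial_ug(u^{(n)})\to\partial_ug(\varphi)$, which follows from continuity of $\partial_ug$ and the $L^p$ convergence $u^{(n)}\to\varphi$, together with the dominated convergence afforded by the boundedness $|\partial_ug|\le L$ and the uniform $\cH$-bounds) produces equation \eqref{sol_bordo} for $\varphi=u$ on the boundary. The representation formula \eqref{sol_int} then transfers the result to $x\in D$ via the same kernel estimates. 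The main obstacle I anticipate is the limit-passage in the nonlinear term: one must argue carefully that the product $\partial_ug(u^{(n)}(s,\bar y))D_{r,\sigma}u^{(n)}(s,\bar y)$ converges (in the appropriate weak sense in $L^p_W(\,\cdot\,;\cH)$) to $\partial_ug(u(s,\bar y))D_{r,\sigma}u(s,\bar y)$, since we only have weak convergence of the derivatives and strong (but only $L^p$, not better) convergence of $u^{(n)}$; a subsequence argument combined with the uniform integrability from the uniform $\cH$-bounds handles this.
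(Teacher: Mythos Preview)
Your proposal is correct and follows essentially the same route as the paper: Picard approximations, recursion for the Malliavin derivatives, a uniform bound obtained via the exponential weight $e^{-\lambda t}$ and the kernel estimate \eqref{stima_ok} (the paper packages this contraction step into Lemma~\ref{l-dopo}), and then Proposition~\ref{p3} to pass to the limit. Your discussion of the limit passage in the nonlinear term $\partial_ug(u^{(n)})\,Du^{(n)}$ is in fact more careful than what the paper writes; note also that the role of {\bf (a1')} is only to put $u_0=Z$ (and $DZ$) in the right space via Lemma~\ref{l6.X} and Lemma~\ref{lemma14}, whereas the spatial integral in the contraction step is handled directly by Lemma~\ref{integral}.
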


\begin{proof}
Let us consider the sequence of Picard's approximations 
\begin{align*}
u_0(t,\xi) &= \int_0^t \int_S \int_{\partial D} p_N(t-s,\xi,\bar y) \alpha(\sigma, \bar y) \,  {\rm d}\bar y \,{\rm d}B(\sigma,s),
\\
u_{n+1}(t,\xi) &=  u_0(t,\xi) +\int_0^t \int_{\partial D} p_N(t-s,\xi,\bar y) g(u_n(s,\bar y)) \, {\rm d}\bar y \, {\rm d}s.
%+ \int_0^t \int_S \int_{\partial D} p_N(t-s,x,\bar y) \alpha(\sigma, \bar y) \,  {\rm d}\bar y \,{\rm d}B(\sigma,s).
\end{align*}
{{From Lemma \ref{l6.X} we know that $u_0  \in L^p_W(0,T; L^p(\partial D))$. Using then the Hypothesis $\textbf{(g1)}$ and appealing to Lemma \ref{l-dopo} (in the case $k=0$), immediately follows that 
\begin{align*}
\|u_{n+1}-u_n\| \le C \|u_n-u_{n-1}\|
\end{align*}
for a constant $C<1$. So we obtain that the sequence $u_n$ converge to $u$, which is clearly the solution of \eqref{eq:nl2}, in $L^p_W(0,T; L^p(\partial D))$ and $\sup_n\|u_n\| < \infty$.}} 

It remains to prove that
$$\sup_n \|D u_n(t,\xi)\| < \infty,$$ which allows to apply Proposition \ref{p3} and get the thesis.

For any $n \in \bN$, taking the Malliavin derivative in the equation defining $u_{n+1}$ we get 
\begin{multline*}
D_{r,\sigma} u_{n+1}(t,\xi) =  \int_0^t \int_{\partial D} p_N(t-s,\xi,\bar y) \partial_u g(u_n(s,\bar y)) \, D_{r,\sigma} u_n(s,\bar y) \, {\rm d}\bar y \, {\rm d}s
\\
+  \int_{\partial D}p_N(t-r,\xi,\bar{y})\alpha(\sigma,\bar{y}){\pmb 1}_{\left[0,t\right]}(r) \, {\rm d}\bar{y}.
\end{multline*}
%
%The existence of a solution for equation \eqref{sol_bordo} follows from a fixed point argument.
%and  the required computations are basically the same used in \cite[Lemma 3.4]{Barbu2014}.
Let us introduce on the space 
$L^p_{W}(\left(0,T\right) \times \partial D;\mathcal{H})$
%$\mathcal{H}$ 
the equivalent norm
%\begin{equation}
%\|f\|^2_{\mathcal{H}} = \mathbb{E} \int_0^T\int_0^T\int_{S}e^{-2\lambda t} e^{-2\lambda \theta} \, |t-\theta|^{2H-2} \, f(t, \sigma)f(\theta, \sigma) \mu({\rm d}\sigma) {\rm d}t {\rm d}\theta
%\end{equation}
\begin{equation}
\label{e:0207-1}
\|f\|^p_{L^p_{W}} = \mathbb{E} \int_0^Te^{-\lambda t}  \|f(t)\|^p_{L^p(\partial D; \mathcal{H})} {\rm d}t
\end{equation}
%({\color{red}{verificare che le norme sono equivalenti)}}
for some constant $\lambda$ to be chosen later.  This norm is equivalent to the standard one. 
%Then we intruduce the operator 
Let us define 
\begin{equation}
\begin{aligned}
\label{PIC}
%\Psi (\varphi)(r, \sigma,t, \xi)
\varphi_{0}(r, \sigma, t, \xi) &:= \int_{\partial D}p_N(t-r,\xi,\bar y)\alpha(\sigma, \bar y){\pmb 1}_{\left[0,t\right]}(r) \, {\rm d}\bar y
\\
\varphi_{n+1}(r, \sigma, t, \xi) &:= \varphi_{0}(r, \sigma, t, \xi) + \int_r^t\int_{\partial D}p_N(t-s,\xi,\bar y)\partial_ug(u_n(s,\bar y))\varphi_n(r, \sigma,s,\bar y) \, {\rm d}\bar y \,{\rm d}s.
\end{aligned}
\end{equation}
We aim to prove that, for every $n$, $\varphi_n \in L^p_{W}(\left[0,T\right] \times \partial D; \mathcal{H})$ and %is a well-defined mapping from %$L^2_{\mathcal{F}}(\left[0,T\right],L^2(\partial D))$
%$\mathcal{H}$ into itself and further is a contraction for some suitable $\lambda$.
\begin{align}
\label{contraction}
\|\varphi_{n+1}-\varphi_{n}\|_{L^p_{W}(\left[0,T\right] \times \partial D; \mathcal{H})} \le C \|\varphi_{n}-\varphi_{n-1}\|_{L^p_{W}(\left[0,T\right] \times \partial D; \mathcal{H})}
\end{align}
for a suitable constant $C < 1$. Since this result is mainly a technical tool and requires some computation, we pospone its proof to Lemma \ref{l-dopo} in the Appendix.
Now, assume that \eqref{contraction} is given. Then 
\begin{align*}
\|\varphi_{n+1}\| 
&\le \|\varphi_{n+1} - \varphi_0\| + \|\varphi_0\| = \sum_{k=0}^n \|\varphi_{k+1} - \varphi_k\| + \|\varphi_0\| \\
&\le \sum_{k=0}^n C^{k} \|\varphi_0\| + \|\varphi_0\| 
\le \left(\frac{1}{1-C}+ 1\right)\|\varphi_0\| < \infty
\end{align*}
holds for any $n$. This allows to apply Proposition \ref{p3} and prove that $u \in \bD^{1,p}$. Moreover, since $D u_n$ converges weakly to $D u$, then
we pass to the limit in the expression of the Malliavin derivative of $u_n$ and we obtain equation \eqref{sol_bordo}. 
\end{proof}

\smallskip

The Malliavin derivative of the solution $u(t,x)$ for problem \eqref{eq:nl}, $x \in D$, is obtained from the process on the boundary, as explained in the 
following result.

%\textcolor{red}{Non mi interessa se $D_{r,z}u(t,\xi)$ è definita quasi ovunque in $t, \xi$ perché tanto io voglio studiare $D_{r,z}u(t,x)$ con $x \in D$ e quindi uso la rappresentation formula con un integrale che mi risolve i problemi.}

\begin{proposition}
\label{def_D(t,x)}
%For $(t, \xi ) in \left0;T \right] \times \partial \mathbb{R}^d_+$, given the process {\color{red}{$\varphi in L^2(\Omega , \mathcal{H})%$\varphi \in L^2(\left[0,T\right],L^2(\partial D))$ }} 
Given the random variable %{\color{red}{$\varphi \in L^2(\Omega , \mathcal{H})$}} 
$\varphi \in L^2_{W}(\left[0,T\right] \times \partial D; \mathcal{H})$ that is the solution of \eqref{sol_bordo}, the solution of 
\begin{equation*}
D_{r,z}u(t,x)= \int_{\partial D}p_N(t-r,x,\bar y)\alpha(\sigma, \bar y){\pmb 1}_{\left[0,t\right]}(r) \,{\rm d}\bar y + \int_r^t\int_{\partial D}p_N(t-s,x,\bar y)\partial_ug(u(s,\bar y))D_{r,z}u(s,\bar y) {\rm d}\bar y\,{\rm d}s
\end{equation*}
on $\left[0,T\right] \times D$ is given by the representation formula
\begin{equation}
\label{der_sol_int}
D_{r,z}u(t,x)= \int_{\partial D}p_N(t-r,x,\bar y)\alpha(\bar y){\pmb 1}_{\left[0,t\right]}(r) \,{\rm d}\bar y + \int_r^t\int_{\partial D}p_N(t-s,x,\bar y)\partial_ug(u(s,\bar y))\varphi(r, \sigma,s,\bar y){\rm d}\bar y\,{\rm d}s
\end{equation}
\end{proposition}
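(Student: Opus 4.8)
The plan is to show that the right-hand side of \eqref{der_sol_int} is indeed a well-defined object in $L^2_W((0,T)\times D;\mathcal{H})$ and that it satisfies the claimed integral identity, by reducing everything to the already-established properties of the boundary process $\varphi$. First I would fix $(r,\sigma)$ and $(t,x)$ with $x\in\mathring D$, and recall from Theorem \ref{D12_sol} that $\varphi\in L^p_W([0,T]\times\partial D;\mathcal{H})$ solves \eqref{sol_bordo}; in particular $D_{r,\sigma}u(s,\bar y)=\varphi(r,\sigma,s,\bar y)$ for a.e.\ $\bar y\in\partial D$. Substituting this identity into the integral equation for $D_{r,z}u(t,x)$ that is displayed in the statement, the term $\partial_u g(u(s,\bar y))D_{r,z}u(s,\bar y)$ appearing under the integral over $\partial D$ is, by the trace identification and the a.e.\ uniqueness of the boundary solution, equal to $\partial_u g(u(s,\bar y))\varphi(r,\sigma,s,\bar y)$; hence \eqref{der_sol_int} is exactly the formula obtained by solving the $x\in D$ equation once the boundary values are known. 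This is the same mechanism already used to pass from \eqref{eq_bordo} to \eqref{sol_int} for the solution $u$ itself.

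Next I would verify that the two terms on the right-hand side of \eqref{der_sol_int} make sense as $\mathcal H$-valued quantities and are finite. For the first term, $\int_{\partial D}p_N(t-r,x,\bar y)\alpha(\sigma,\bar y)\,{\rm d}\bar y$ is exactly $D_{r,\sigma}Z(t,x)$, which was shown to lie in $\mathcal H$ (indeed $Z(t,x)\in\bD^{1,p}$) in Lemma \ref{lem_5} and Lemma \ref{lemma14} using the kernel bound \eqref{stima_ok} and, for $x\in\mathring D$, the integrability \eqref{spatial_term}. For the second term, I would use {\bf (g1)}, so that $|\partial_u g|\le L$, together with the kernel estimate \eqref{stima_ok} and the fact that $\varphi\in L^2_W([0,T]\times\partial D;\mathcal{H})$, to bound
\begin{align*}
\left\| \int_r^t\int_{\partial D}p_N(t-s,x,\bar y)\partial_u g(u(s,\bar y))\varphi(r,\sigma,s,\bar y)\,{\rm d}\bar y\,{\rm d}s\right\|_{\mathcal H}
\le L\int_0^t\int_{\partial D} p_N(t-s,x,\bar y)\,\|\varphi(\cdot,\cdot,s,\bar y)\|_{\mathcal H}\,{\rm d}\bar y\,{\rm d}s,
\end{align*}
and then apply the same time–space integral estimates (Lemma \ref{integral}, Lemma \ref{prel_lemma}, and the convolution computation carried out in the proof of Corollary \ref{co:stima} and of Proposition \ref{cont_convoluzione}) to conclude finiteness, uniformly enough to give membership in $L^p_W((0,T)\times D;\mathcal{H})$. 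Since for $x\in\mathring D$ the kernel $\bar y\mapsto p_N(t-s,x,\bar y)$ is bounded away from the singular set, the spatial integral here is even easier than its boundary analogue and the constants are controlled by $\mathop{\rm dist}(x,\partial D)$.

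Finally, I would note that the representation formula holds \emph{everywhere} in $x\in\mathring D$, not merely a.e., precisely because for interior $x$ the integrand in \eqref{der_sol_int} involves $p_N(t-s,x,\cdot)$ which is continuous in $x$ and integrable against the boundary measure, so the right-hand side defines a genuine pointwise function of $x$ (this is the same remark made in the Introduction about \eqref{sol_int} and \eqref{der_sol_int}). I do not expect a serious obstacle here: the only point requiring a little care is the measurability/adaptedness in $(r,\sigma)$ of the map $(r,\sigma)\mapsto\varphi(r,\sigma,s,\bar y)$ and the Fubini-type interchange needed to view the second integral as an element of $\mathcal H$ rather than an $\mathcal H$-valued Bochner integral computed slicewise; this is handled by approximating $\varphi$ by the Picard iterates $\varphi_n$ from \eqref{PIC}, for which every manipulation is elementary, and passing to the limit using the contraction estimate \eqref{contraction} together with the bounds just described.
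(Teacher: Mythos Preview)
The paper does not give a proof of this proposition; it is stated without argument, being treated as the direct analogue, for the Malliavin derivative, of the passage from the boundary equation \eqref{eq_bordo} to the interior representation \eqref{sol_int} for $u$ itself. Your proposal is correct and simply makes explicit what the paper leaves implicit: the key observation that the right-hand side of the equation on $[0,T]\times D$ involves $D_{r,z}u(s,\bar y)$ only for $\bar y\in\partial D$, so that substituting the already-constructed boundary process $\varphi$ yields \eqref{der_sol_int}; and the well-definedness checks via \eqref{stima_ok}, Lemma \ref{lem_5}, assumption {\bf (g1)}, and the estimates behind Lemma \ref{l-dopo}. Your approach therefore matches the paper's intended reasoning, with the added benefit of spelling out the integrability and measurability details.
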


\subsection{Smoothness of the Malliavin derivative}

In the sequel we prove that, under the stronger assumption {\bf (g2)}, the solution of problem \eqref{eq:nl2} belongs to $\bD^\infty$.
In turn, this result will allow to study the smoothness of the density of the random variable $u(t,x)$ with respect to the Lebesgue measure.

Let us give an hint about the following construction. First, notice that $u \in \bD^\infty$ means that $u \in \bD^{k,p}$ for any $k \in \bN$ and $p \ge 1$;
moreover, it is also implied if we prove that $D^k u \in \bD^{1,p}$ for any $k$ and any $p$.
\\
Taking a look to the equation satisfied by the Malliavin derivative $D u(t,x)$, we notice that it has the same form as the original problem \eqref{eq:nl2},
just changing the relevant coefficients. Then we may use this analogy to prove that
this process has a derivative itself. After that, we shall use an iteration argument to conclude the construction.

%{\color{blue}{
For the sake of simplicity, we shall provide a unique result concerning existence of the Malliavin derivative of the solution for a general class of problems. 

\begin{lemma}
\label{l:0107-1}
Let $V(t,x)$ the solution of the linear (integral) equation in $\mathcal{H}^{\otimes k}$, for every $k \ge 1$,
\begin{equation}
\label{V}
V(t,x)=f_k(t,x)+\int_0^t \int_{\partial D} p_N(t-s,x, \bar{y})h(s,\bar{y})V(s,\bar{y}) {\rm d}s {\rm d}\bar{y}.
\end{equation}
Under the assumptions (for every $p \ge 2$)
\begin{equation}
\label{f_k}
f_k \in L^p_{W}((0,T) \times \partial D;\mathcal{H}^{\otimes k}),
\end{equation}
\begin{equation}
\label{h_ass}
h \in L^{\infty}_{W}((0, T) \times \partial D),
\end{equation}
\begin{equation}
\label{Df_k_ass}
Df_k\in L^p_W((0,T) \times \partial D; \mathcal{H}^{\otimes (k+1)}),
\end{equation}
\begin{equation}
\label{h_ass_2}
Dh \in L^{\infty}_W((0,T) \times \partial D; \mathcal{H}),
\end{equation}
we have that $V(t,x) \in \mathbb{D}^{1,\infty}$ for a.e. $(t,x)$
and 
\begin{equation}
\label{DV}
DV(t,x)=f_{k+1}(t,x)+ \int_0^T\int_{\partial D}p_N(t-s, x, \bar{y})h(s, \bar{y})DV(s, \bar{y}) {\rm d}s\, {\rm d}\bar{y},
\end{equation}
where 
\begin{equation}
f_{k+1}(t,x)=Df_k(t,x)+ \int_0^T \int_{\partial D} p_N(t-s, x, \bar{y})Dh(s, \bar{y}) V(s, \bar{y}){\rm d}\, \bar{y} {\rm d}s
\end{equation}
\end{lemma}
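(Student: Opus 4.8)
The plan is to mimic the argument used in Theorem \ref{D12_sol}: set up a Picard iteration for the solution $V$ of \eqref{V}, differentiate it, and show the derivatives are uniformly bounded in $L^p_W$, so that Proposition \ref{p3} applies. First I would record that \eqref{V} is a linear Volterra equation with kernel $p_N$ satisfying the estimate \eqref{stima_ok}; together with $h \in L^\infty_W$ and $f_k \in L^p_W$, the usual fixed-point argument (with the weighted norm $\|f\|^p_{L^p_W}=\mathbb{E}\int_0^T e^{-\lambda t}\|f(t)\|^p_{L^p(\partial D;\mathcal{H}^{\otimes k})}{\rm d}t$ and a contraction constant controlled via Lemma \ref{l-dopo}) gives a unique solution $V \in L^p_W((0,T)\times \partial D;\mathcal{H}^{\otimes k})$ for every $p \ge 2$. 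Denote by $V_0 = f_k$, $V_{n+1}(t,x)=f_k(t,x)+\int_0^t\int_{\partial D}p_N(t-s,x,\bar y)h(s,\bar y)V_n(s,\bar y)\,{\rm d}s\,{\rm d}\bar y$ the Picard approximations, which converge to $V$ in $L^p_W$ with $\sup_n\|V_n\|_{L^p_W}<\infty$.

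Next I would differentiate the recursion in the Malliavin sense. Using the product rule for $D$ (legitimate once we know $V_n \in \mathbb{D}^{1,p}$, which we get inductively starting from $Df_k \in L^p_W$ by \eqref{Df_k_ass}) and the fact that $p_N$ is deterministic,
\begin{multline*}
D V_{n+1}(t,x) = Df_k(t,x) + \int_0^t\int_{\partial D} p_N(t-s,x,\bar y)\, Dh(s,\bar y)\, V_n(s,\bar y)\,{\rm d}s\,{\rm d}\bar y
\\
+ \int_0^t\int_{\partial D} p_N(t-s,x,\bar y)\, h(s,\bar y)\, D V_n(s,\bar y)\,{\rm d}s\,{\rm d}\bar y.
\end{multline*}
The first two terms form an approximation of $f_{k+1}$, and the third is the linear term with the same coefficient $h$ as before. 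So $DV_n$ itself satisfies (approximately) a linear Volterra equation of exactly the form \eqref{V}, now in $\mathcal{H}^{\otimes(k+1)}$, with inhomogeneous term $f_{k+1}$ built from $Df_k$, $Dh$ and $V$. To bound $\sup_n \|DV_n\|_{L^p_W}$ I would first check that $f_{k+1} \in L^p_W((0,T)\times\partial D;\mathcal{H}^{\otimes(k+1)})$: the term $Df_k$ is in this space by \eqref{Df_k_ass}, while $\int_0^t\int_{\partial D}p_N(t-s,x,\bar y)Dh(s,\bar y)V(s,\bar y)\,{\rm d}s\,{\rm d}\bar y$ is controlled by $\|Dh\|_{L^\infty_W(\cdots;\mathcal{H})}$ (from \eqref{h_ass_2}), $\|V\|_{L^p_W}$, and the kernel estimate \eqref{stima_ok} integrated via Lemma \ref{integral} / Lemma \ref{l-dopo}. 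Then the same contraction estimate from Lemma \ref{l-dopo}, applied to the differences $DV_{n+1}-DV_n$, yields $\|DV_{n+1}-DV_n\|_{L^p_W}\le C\|DV_n - DV_{n-1}\|_{L^p_W}$ with $C<1$ after choosing $\lambda$ large, hence $\sup_n\|DV_n\|_{L^p_W} \le (\frac1{1-C}+1)\|f_{k+1}\|_{L^p_W}<\infty$, uniformly in $n$ and (because all constants come only from $T,H,\mu,p$ and the stated norms) valid for every $p \ge 2$.

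Having $V_n \to V$ in $L^p(\Omega)$ (for a.e. $(t,x)$) and $\sup_n(\|V_n\|_{1,p}) = \sup_n(\|V_n\|_{L^p} + \|DV_n\|_{L^p(\cdot;\mathcal H^{\otimes(k+1)})}) < \infty$ for all $p$, Proposition \ref{p3} gives $V(t,x) \in \mathbb{D}^{1,p}$ for every $p$, i.e. $V(t,x)\in\mathbb{D}^{1,\infty}$ for a.e. $(t,x)$; weak convergence of $DV_n$ to $DV$ in the relevant $L^p$ space lets us pass to the limit in the differentiated recursion and identify $DV$ as the solution of \eqref{DV} with inhomogeneous term $f_{k+1}$. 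The main obstacle I expect is purely technical bookkeeping: verifying that the weighted-norm contraction estimate of Lemma \ref{l-dopo} applies uniformly when the unknown takes values in $\mathcal{H}^{\otimes k}$ rather than in $\bR$ (it does, since the kernel $p_N$ is scalar and the Hilbert-space norm passes through all the integral inequalities), and checking that $f_{k+1}\in L^p_W$ — in particular controlling the convolution of $p_N$ against the product $Dh\cdot V$, which is where assumptions \eqref{h_ass_2} and \eqref{f_k} are both used. No new idea beyond the scheme already executed for Theorem \ref{D12_sol} is needed; the lemma is deliberately stated so that it can later be iterated in $k$ to obtain $u\in\mathbb{D}^\infty$.
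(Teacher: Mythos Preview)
Your approach is essentially the same as the paper's: Picard iteration for $V$, differentiate the recursion, bound $\sup_n\|DV_n\|$ uniformly, and invoke Proposition~\ref{p3}. There is, however, one technical slip worth flagging. You claim that Lemma~\ref{l-dopo} applied to $DV_{n+1}-DV_n$ gives directly $\|DV_{n+1}-DV_n\|_{L^p_W}\le C\|DV_n-DV_{n-1}\|_{L^p_W}$. This is not quite right: the inhomogeneous term in the differentiated recursion is
\[
\lambda_n(t,x)=Df_k(t,x)+\int_0^t\int_{\partial D}p_N(t-s,x,\bar y)\,Dh(s,\bar y)\,V_n(s,\bar y)\,{\rm d}\bar y\,{\rm d}s,
\]
which depends on $n$ through $V_n$, so Lemma~\ref{l-dopo} (whose hypothesis is a fixed $V_0$) does not apply verbatim. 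The correct estimate for the differences is
\[
\|DV_{j+1}-DV_j\|\le \|\lambda_j-\lambda_{j-1}\|+C\|DV_j-DV_{j-1}\|\le C\|V_j-V_{j-1}\|+C\|DV_j-DV_{j-1}\|,
\]
and iterating this two-term recursion (both pieces decay geometrically since $\|V_j-V_{j-1}\|\le C^{j-1}\|V_1-V_0\|$) yields the uniform bound $\sup_n\|DV_n\|<\infty$, which is exactly how the paper's Step~3 proceeds. Once this is fixed, your argument and the paper's coincide.
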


\begin{proof}
For the sake of simplicity let us divide the proof in four steps.

\textbf{Step 1:}
Consider the Picard approximations defined by the recursive equations:
\begin{align*}
V_0(t,x)=f_k(t,x);
\end{align*}

\begin{align}
\label{Picard_V}
V_{n+1}(t,x)=V_0(t,x)+\int_0^T\int_{\partial D}p_N(t-s, x, \bar{y})h(s, \bar{y})V_n(s, \bar{y}) {\rm d}s \,{\rm d}\bar{y}
\end{align}
%The idea is to proceed by induction on $i$.
%\\
Notice that the term $V_0 \in L^p_{W}((0,T) \times \partial D;\mathcal{H}^{\otimes k})$ by assumption.
Since $h$ is bounded and $p_N$ is a Gaussian kernel, it is possible to prove (use Lemma \ref{l-dopo}) that
the right hand side of equation \eqref{Picard_V} defines a contraction in the space $L^p_W((0,T)\times \partial D; \mathcal{H}^{\otimes k})$, for every $p \ge 2$. 
This implies the existence and uniqueness of the random variable $V \in \bigcap_{p \ge 2}L^p_W((0,T) \times \partial D; \mathcal{H}^{\otimes k})$
which is the solution of \eqref{V}. 

\textbf{Step 2:}
Taking the Malliavin derivative in \eqref{Picard_V} we get
\begin{align}
\label{e:0207-3}
DV_{n+1}(t,x)
=& Df_k(t,x)+ \int_0^t\int_{\partial D}p_N(t-s, x, \bar{y})Dh(s, \bar{y})V_n(s, \bar{y}) \, {\rm d}\bar{y} \, {\rm d}s
\notag \\
&+ \int_0^t\int_{\partial D}p_N(t-s, x, \bar{y})h(s, \bar{y})DV_n(s, \bar{y}) \, {\rm d}\bar{y} \, {\rm d}s
\notag \\
=& \lambda_n(t,x)+ \int_0^t\int_{\partial D}p_N(t-s, x, \bar{y})h(s, \bar{y})DV_n(s, \bar{y}) \, {\rm d}\bar{y} \, {\rm d}s.
\end{align}
We see that, for every $n$, $\lambda_n \in L^p_W((r,T) \times \partial D; \mathcal{H}^{\otimes (k+1)})$. This follows from assumptions \eqref{f_k} - \eqref{Df_k_ass} and by the fact (proved in Step 1) that $V_n \in L^p_W((r,T)\times \partial D; \mathcal{H}^{\otimes k})$ for every $n$.
So, proceeding exactly as in step 1 we get that the right hand side of the above expression defines a contraction in the space $L^p_W((0,T)\times \partial D; \mathcal{H}^{\otimes (k+1)})$.

\textbf{Step 3:}
At this point we recover the following estimate:
%We see that it is possible to appeal again to Lemma \ref{l-dopo} to prove that 
\begin{equation}
\label{sup_DV}
\sup_n\|DV_{n}\|_{L^p_W((0,T)\times \partial D; \mathcal{H}^{\otimes (k+1)})} \le C < \infty,
\end{equation}
In fact, we have:
\begin{equation}
\|DV_n\| \le\|DV_0\| + \|DV_n-DV_0\| \le \|DV_0\| + \sum_{j=0}^{n-1} \|DV_{j+1}-DV_j\|,
\end{equation}
where $\|DV_0\| < C$ since $DV_0 \in L^p_W((0,T)\times \partial D; \mathcal{H}^{\otimes (k+1)})$ by step 3 and, thanks to Lemma \ref{l-dopo} we recover
\begin{align*}
\|DV_{j+1}-DV_j\| 
&\le \|\lambda_j- \lambda_{j-1}\|+ \left|\left| \int_r^t\int_{\partial D}p_N(t-s, x, \bar{y})h(s, \bar{y})(DV_j-DV_{j-1})(s, \bar{y}){\rm d}\bar{y} {\rm d}s\right|\right|
\\
&\le C\|V_j-V_{j-1}\|+C\|DV_j - DV_{j-1}\|
\end{align*}
Iterating this inequality we recover
\begin{align*}
\sum_{j=0}^{n-1} \|DV_{j+1}-DV_j\| 
&\le \|V_1-V_0\|\sum_{j=0}^{n-1}jC^j + \|DV_1-DV_0\|\sum_{j=0}^{n-1} C^j
\\
&\le \frac{1}{(1-C)^2}\|V_1-V_0\|+ \frac{1}{1-C}\|DV_1-DV_0\|
\end{align*}
which allows us to conclude that estimate \eqref{sup_DV} holds.
%then, as outlined in the proof of Theorem \ref{D12_sol},

\textbf{Step 4:} Appealing to Proposition \ref{p3}, by steps one and three it follows that $V \in \mathbb{D}^{1,p}$ for every $p \ge 2$. Since $V_n\rightarrow V$ and $DV_n \rightarrow DV$, passing to the limit in \eqref{e:0207-3} we get that $DV$ solves \eqref{DV}.

\end{proof}

\begin{remark}
\label{r:0107-1}
In our case, we recognize that
\begin{align*}
h(t,x) = \partial_u g(u(t,x))
\end{align*}
is a bounded process and, by considering \eqref{sol_bordo}, we may
set
\begin{align*}
f_1(t,x) = \int_{\partial D}p_N(t-r,\xi,\bar{y})\alpha(\sigma,\bar{y}){\pmb 1}_{\left[0,t\right]}(r) \, {\rm d}\bar{y} 
\end{align*}
which satisfies the assumption of Lemma \ref{l:0107-1}. We may also compute
\begin{align*}
f_2(t,x) = \int \int p_N(t-s,x,y) \partial^2_u g(u(s,y)) (D u(s,y))^2 \, {\rm d}y \, {\rm d}s
\end{align*}
and, in general, $f_{n+1}$ depends on $\partial^2_ug, \dots, \partial^{n+1}_ug$, and $D u, \dots, D^n u$
in a polynomial way, so a recursive argument implies that the assumption of Lemma \ref{l:0107-1} are satisfied for any $n$.
\end{remark}

%***************************************************

\begin{corollary}
\label{cor}
The random variable $u(t, x)$ belongs to the space $\mathbb{D}^{\infty}$ for every $(t, x) \in \left[0,T\right] \times \bar{D}$.
\end{corollary}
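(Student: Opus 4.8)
The plan is to bootstrap from Lemma~\ref{l:0107-1} by induction on the order $k$ of the Malliavin derivative, exactly along the lines sketched in Remark~\ref{r:0107-1}. Concretely, I would show by induction on $k \ge 1$ that $D^{k} u(t,x)$ is well-defined as an element of $L^p_W((0,T)\times\partial D;\mathcal H^{\otimes k})$ for every $p\ge 2$, that it solves a linear integral equation of the form~\eqref{V} with $h(s,\bar y)=\partial_u g(u(s,\bar y))$ and a suitable free term $f_k$, and that $f_k$ together with $Df_k$, $h$ and $Dh$ satisfy hypotheses~\eqref{f_k}--\eqref{h_ass_2}. Granting these, Lemma~\ref{l:0107-1} yields $D^k u \in \mathbb D^{1,p}$ for every $p$, i.e. $D^{k+1}u$ exists in $L^p_W$ and solves the next equation in the hierarchy, which closes the induction. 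Since then $u \in \bigcap_{k\ge1}\bigcap_{p\ge1}\mathbb D^{k,p}$ on the boundary, the representation formula~\eqref{der_sol_int} (and its higher-order analogues) transfers the conclusion to every $x\in\bar D$, giving $u(t,x)\in\mathbb D^\infty$.

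The base case $k=1$ is Theorem~\ref{D12_sol}: $Du$ exists, lies in $L^p_W((0,T)\times\partial D;\mathcal H)$, and solves~\eqref{sol_bordo}, which is~\eqref{V} with $f_1(t,x)=\int_{\partial D}p_N(t-r,x,\bar y)\alpha(\sigma,\bar y)\uno_{[0,t]}(r)\,{\rm d}\bar y$ and $h=\partial_u g(u)$. Assumption~\eqref{h_ass} holds because $|\partial_u g|\le L$ by~\textbf{(g1)}, and~\eqref{h_ass_2} because $Dh(s,\bar y)=\partial_u^2 g(u(s,\bar y))\,Du(s,\bar y)$ is bounded in $L^\infty_W$ using~\textbf{(g2)} and the already-established $L^\infty$-type bounds on $Du$ coming from Lemma~\ref{lemma14}/the estimates in Theorem~\ref{D12_sol}. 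That $f_1 \in L^p_W$ and $Df_1\in L^p_W(\,\cdot\,;\mathcal H^{\otimes 2})$ follows from the kernel estimate~\eqref{stima_ok} and Lemma~\ref{prel_lemma} under~\textbf{(a1')}, essentially the computation already performed in Lemma~\ref{l6.X} and Lemma~\ref{lem_5} (note $Df_1=0$ in fact, since $f_1$ is the stochastic-convolution-type term whose Malliavin derivative is deterministic, or more precisely its $\mathcal H$-derivative is handled as in the convolution case). For the inductive step, from $D^k u = V$ solving~\eqref{V} one computes $f_{k+1}=Df_k+\int\int p_N\,Dh\,V$, and Remark~\ref{r:0107-1} records the structural fact that $f_{k+1}$ is a polynomial expression in $\partial_u^2 g,\dots,\partial_u^{k+1}g$ and $Du,\dots,D^k u$; boundedness of all derivatives of $g$ by~\textbf{(g2)} together with the inductively known $L^p_W$-bounds on $Du,\dots,D^k u$ gives $f_{k+1}\in L^p_W$ and $Df_{k+1}\in L^p_W$ for every $p$, so the hypotheses of Lemma~\ref{l:0107-1} are met at level $k+1$.

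The main obstacle, and the step that needs the most care, is verifying that the free term $f_{k+1}$ and its Malliavin derivative genuinely land in $L^p_W((0,T)\times\partial D;\mathcal H^{\otimes(k+1)})$ for \emph{all} $p$: this requires controlling products like $\partial_u^{j} g(u)\prod_i D^{\ell_i}u$ in mixed $L^p(\Omega;\mathcal H^{\otimes\cdot})$ norms, which one handles by Hölder's inequality in $\Omega$ together with the fact that each $D^{\ell} u$ has moments of all orders (since $\|D^\ell u\|_{\mathcal H^{\otimes\ell}}$ is, at each stage, bounded by a deterministic constant times powers of the kernel integrals, as in Lemma~\ref{lemma14}), plus the singular-kernel time--space integrals estimated as in Corollary~\ref{co:stima} and Lemma~\ref{l-dopo}. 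A secondary point is bookkeeping the localization: all boundary statements hold only a.e.\ in $\xi\in\partial D$ (Remark~\ref{rem3}), so one works throughout with representatives in $L^p(\partial D)$ and only at the end invokes the representation formulas~\eqref{sol_int},~\eqref{der_sol_int} to obtain pointwise-in-$x$ conclusions for $x\in D$ (and, by the kernel estimates being valid up to $\bar D$, for $x\in\bar D$).
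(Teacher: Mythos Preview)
Your proposal is correct and follows essentially the same route as the paper: bootstrap via Lemma~\ref{l:0107-1} with $h=\partial_u g(u)$ and the recursively defined $f_k$ as described in Remark~\ref{r:0107-1}, then transfer the conclusion from $\partial D$ to $\bar D$ through the representation formula of Proposition~\ref{def_D(t,x)}. The paper's own proof is very terse (essentially just citing Lemma~\ref{l:0107-1}, Remark~\ref{r:0107-1}, and Proposition~\ref{def_D(t,x)}), whereas you have spelled out the induction and flagged the verification of hypotheses~\eqref{f_k}--\eqref{h_ass_2} and the a.e.\ boundary localization more explicitly; the one place to be slightly more careful is the claim that $Dh=\partial_u^2 g(u)\,Du$ lies in $L^\infty_W$, since what is actually established is $Du\in L^p_W$ for all $p$, but this is enough (via H\"older) for the use made of it in Step~2 of Lemma~\ref{l:0107-1}.
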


\begin{proof}
First, as explained above, the result follows for $x \in \partial D$ from 
Lemma \ref{l:0107-1} and Remark \ref{r:0107-1}.
%Theorem \ref{D_infty}, Definition \ref{malliavin2} and Remark \ref{extension}.%the previous Lemma and the representation formulas for the solution and its Malliavin derivative.
Then, we conclude by noticing that for $x \in D$ the Malliavin derivative is given (compare with Proposition \ref{def_D(t,x)}) in terms of the
same process on the boundary, hence we infer the regularity of $u(t,x)$ by that of $u(t,\xi)$ for $\xi \in \partial D$.
\end{proof}

%************************EXISTENCE OF DENSITY

\section{Existence of the density for the solution of the  non homogeneous equation}
\label{Edsnhe}

We are concerned with the law of the random variable $u(t,x)$ that represents the solution of problem \eqref{eq:nl} for $t \in (0,T)$ and $x \in D$.
We shall prove first that $u(t,x)$ has a density that is absolutely continuous with respect to the Lebesgue measure.
Later in this section we shall prove, under additional conditions on the coefficients of \eqref{eq:nl}, that this density is smooth ($C^\infty$).

\begin{theorem}
\label{exist_dens}
For every $t \in \left[0,T\right]$ and a.e. $\xi \in \partial D$, the random variable $u(t, \xi)$ has a density with respect to the Lebesgue measure on $\mathbb{R}$.
\end{theorem}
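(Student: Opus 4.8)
The plan is to apply the Bouleau--Hirsch-type criterion of Theorem~\ref{theo:dens} with $n=1$ to the random variable $F = u(t,\xi)$. Two things must be checked: first, that $u(t,\xi) \in \bD^{1,2}_{\rm loc}(\cH)$, and second, that the (scalar) Malliavin matrix $\|D u(t,\xi)\|^2_{\cH}$ is strictly positive almost surely. The first point is already essentially in hand: by Theorem~\ref{D12_sol} (under hypotheses {\bf (g1)} and {\bf (a1')}) the Malliavin derivative $Du(t,\xi)$ exists, lies in $L^p_W((0,T)\times\partial D;\cH)$, and satisfies the linear integral equation \eqref{sol_bordo}, so in particular $u(t,\xi)\in\bD^{1,2}$ for a.e.\ $\xi$. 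Thus the whole weight of the proof falls on the nondegeneracy estimate, and here the natural strategy is to compare $D u(t,\xi)$ with the Malliavin derivative $D Z(t,\xi)$ of the stochastic convolution, for which the lower bound $\|DZ(t,\xi)\|^2_{\cH}\ge \tilde C\, t^{2H-1} > 0$ was already obtained in Lemma~\ref{esis_dens_Z} using the Gaussian lower bound for the Poisson kernel (Lemma~\ref{lower}) together with the non-degeneracy assumption {\bf (a2)}.

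First I would record, from \eqref{sol_bordo}, the decomposition
\begin{equation*}
D_{r,\sigma} u(t,\xi) = D_{r,\sigma} Z(t,\xi) + R_{r,\sigma}(t,\xi), \qquad R_{r,\sigma}(t,\xi) := \int_r^t \int_{\partial D} p_N(t-s,\xi,\bar y)\,\partial_u g(u(s,\bar y))\, D_{r,\sigma}u(s,\bar y)\, {\rm d}\bar y\, {\rm d}s,
\end{equation*}
where $D_{r,\sigma}Z(t,\xi) = \int_{\partial D} p_N(t-r,\xi,\bar y)\alpha(\sigma,\bar y){\mathds 1}_{[0,t]}(r)\,{\rm d}\bar y$. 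The idea is then to restrict attention to a small time layer $r \in (t-\delta, t)$: on that layer the remainder $R$ is negligible of higher order because the inner time integral runs only over $(r,t)\subset(t-\delta,t)$, an interval of length $<\delta$, while $\partial_u g$ is bounded by $L$ (hypothesis {\bf (g1)}). Concretely, writing $\|\cdot\|_{\cH,\delta}$ for the seminorm built from the kernel $|s-r|^{2H-2}$ restricted to $(t-\delta,t)^2$, one uses the triangle inequality $\|Du(t,\xi)\|_{\cH,\delta} \ge \|DZ(t,\xi)\|_{\cH,\delta} - \|R(t,\xi)\|_{\cH,\delta}$, bounds $\|R(t,\xi)\|_{\cH,\delta}$ in terms of $L$, the kernel estimate \eqref{stima_ok}, and $\sup_{s}\|Du(s,\cdot)\|$ (finite by Theorem~\ref{D12_sol}), obtaining a bound that is $o(1)$ relative to the $DZ$ term as $\delta\to 0$; meanwhile the argument of Lemma~\ref{esis_dens_Z}, run on the layer $(t-\delta,t)$ instead of $(0,t/2)$, still yields $\|DZ(t,\xi)\|^2_{\cH,\delta}\ge c\,\delta^{2H-1}$ thanks to {\bf (a2)} and Lemma~\ref{lower}. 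Choosing $\delta$ small enough (deterministically, since all these bounds are pathwise/deterministic once $\sup_s\|Du(s,\cdot)\|$ is controlled) gives $\|Du(t,\xi)\|^2_{\cH}\ge \|Du(t,\xi)\|^2_{\cH,\delta} \ge \delta_0 > 0$ a.s., which is exactly hypothesis~(2) of Theorem~\ref{theo:dens}.

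The main obstacle I anticipate is making the ``remainder is higher order'' step genuinely pathwise: the quantity $\sup_{s\in(0,T)}\|D u(s,\cdot)\|_{L^p(\partial D;\cH)}$ is finite in $L^p(\Omega)$ but controlling $\|R(t,\xi)\|_{\cH,\delta}$ pointwise in $\omega$ requires care — one likely argues on a set of full probability where the relevant norms are finite, or else works with expectations and passes to an a.s.\ statement via a limiting/localization argument, which is precisely why Theorem~\ref{theo:dens} only asks for $\bD^{1,2}_{\rm loc}$ and a.s.\ invertibility rather than $L^p$ bounds. A secondary technical point is checking that the constants in the Lemma~\ref{esis_dens_Z} computation, when the time integration is moved to the layer $(t-\delta,t)$, retain the clean power $\delta^{2H-1}$; this is a routine change of variables ($s = t-\delta+\sigma$) combined with the elementary Beta-type integral $\int_0^\delta\int_0^\delta|\sigma-\rho|^{2H-2}\,{\rm d}\sigma\,{\rm d}\rho = c_H\,\delta^{2H}$, together with the observation that on $(t-\delta,t)$ one has $(t-s)^{-d/2}\ge t^{-d/2}$ so the Gaussian prefactors are bounded below uniformly. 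Once these two points are settled, the conclusion is immediate from Theorem~\ref{theo:dens}.
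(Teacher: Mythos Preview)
Your approach is essentially the paper's: the same decomposition $Du = DZ + R$, the same restriction to a time layer $(t-\delta,t)$ with the localized seminorm, the same lower bound $\|DZ\|^2_{\cH_\delta}\ge c\,\delta^{2H-1}$ (this is Lemma~\ref{DF}), and the same upper bound on the remainder via $|\partial_u g|\le L$ and the kernel estimate (this is Lemma~\ref{Du_s}, yielding $\bE\|R(t,\xi)\|^p_{\cH_\delta}\le C\delta^p$).

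The one point where you and the paper diverge is precisely the obstacle you flag. You try to make the comparison pathwise and then suggest localization; the paper instead exploits that $DZ(t,\xi)$ is \emph{deterministic} (so $\|DZ\|^2_{\cH_\delta}$ is a number, not a random variable) and handles the random remainder by Chebyshev:
\[
\bP\bigl(\|Du(t,\xi)\|^2_{\cH}<\varepsilon\bigr)\le \bP\Bigl(\|R(t,\xi)\|^2_{\cH_\delta}\ge \tfrac12\|DZ(t,\xi)\|^2_{\cH_\delta}-\varepsilon\Bigr)\le \frac{\bE\|R(t,\xi)\|^{2\tilde p}_{\cH_\delta}}{\bigl(\tfrac12\|DZ(t,\xi)\|^2_{\cH_\delta}-\varepsilon\bigr)^{\tilde p}}\le C\,\varepsilon^{\tilde p\,\frac{3-2H}{2H-1}},
\]
after tuning $\delta$ so that $\tfrac12 c\,\delta^{2H-1}=2\varepsilon$. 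This avoids any pathwise control of $\sup_s\|Du(s,\cdot)\|$ and gives, as a bonus, the quantitative small-ball estimate needed later for smoothness of the density. Your localization idea could be made to work, but Chebyshev is both simpler and strictly more informative here.
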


\begin{proof}
In order to get some relevant estimates , we will consider a smaller time interval than $(0,t)$ and consider the $\mathcal{H}$ norm of $u(t,\xi)$ on $(t-\delta,t)$ for some $\delta>0$ small enough.
Then we define, for every $\varphi \in \mathcal{H}$ the norm 
$$\|\varphi\|_{\mathcal{H}_{\delta}}:= \|\pmb{1}_{(t- \delta,t)}(\cdot)\varphi\|_{\mathcal{H}}.$$
It is then straightforward to get
$$\|\varphi\|_{\mathcal{H}} \ge \|\varphi\|_{\mathcal{H}_{\delta}}$$

The existence of the density follows from Theorem \ref{theo:dens} and the estimate
\begin{equation}
\label{21}
\mathbb{E}\|D u(t,\xi)\|^2_\cH > 0 \qquad \text{a.s.}
\end{equation}
In turn, to prove that \eqref{21} holds $\mathbb{P}$-a.s., the idea is to prove that 
%$$\mathbb{P}\left(\|Du(t,x)\|^2_{\mathcal{H}} =0\right)=0,$$
%or, better, that 
\begin{align*}
\mathbb{P}\left(\|Du(t,\xi)\|^2_{\mathcal{H}} < \veps\right) \rightarrow 0
\end{align*}
as $\veps \to 0$.
\\
Setting for simplicity
\begin{align*}
G(t,\xi):=\int_0^t\int_{\partial D}p_N(t-s, \xi, \bar y)g(u(s, \bar y))\, {\rm d}\bar y\, {\rm d}s
\end{align*}
we get
\begin{align*}
\|Du(t,\xi)\|^2_{\mathcal{H}} \ge \|Du(t,\xi)\|^2_{\cH_\delta} &= \|DZ(t,\xi)+ DG(t,\xi)\|^2_{\mathcal{H}_\delta} \\
&\ge \frac12 \|DZ(t,\xi)\|^2_{\mathcal{H}_\delta}-\|DG(t,\xi)\|^2_{\mathcal{H}_\delta}.
\end{align*}
Using then Chebyshev's inequality we have
\begin{align*}
\mathbb{P}\left(\|Du(t,\xi)\|^2_{\mathcal{H}} < \veps\right) &\le \mathbb{P}\left(\|Du(t,\xi)\|^2_{\mathcal{H}_{\delta}} < \veps\right)\\
&\le \mathbb{P}\left(\|DG(t,\xi)\|^2_{\mathcal{H}_{\delta}} \ge \frac12\|DZ(t,\xi)\|_{\mathcal{H}_{\delta}}^2- \veps \right)\\
&\le \frac{\mathbb{E}\|DG(t,\xi)\|^{2 \tilde p}_{\mathcal{H}_{\delta}}}{\left( \frac12\|DZ(t,\xi)\|_{\mathcal{H}_{\delta}}^2- \veps\right)^{\tilde p}}\\
%&\approx \frac{\mathbb{E}\|DF(t,x)\|^{2p}_{\mathcal{H}_{\veps}}}{\left(\frac 12{\veps}^{2H}-{\veps}^{2\alpha}\right)^p}
\end{align*}
Using the estimates obtained in Lemmas \ref{Du_s} and \ref{DF}, and choosing $\delta$ such that
$$\frac12 C_{T,x,\delta_0, \alpha} \delta^{2H-1}=2 \varepsilon,$$
for every $\varepsilon< \varepsilon_0=\frac14 C_{T,x,\delta_0, \alpha} \delta_0^{2H-1}$ we have 
\begin{equation}
\label{stima_P}
\mathbb{P}\left(\|Du(t,\xi)\|^2_{\mathcal{H}} < \veps\right) \le \frac{c \delta^{2 \tilde{p}}}{\veps^{\tilde{p}}}=C(\veps^{\frac{3-2H}{2H-1}})^{\tilde{p}}
\end{equation}

and since $\frac{3-2H}{2H-1}>0$ for any $H \in \left(\frac 12 ,1\right)$, the above estimate allows us to conclude the proof.

% {\color{red}{$-->$ SPECIFICARE COSTANTI (DA COSA DIPENDONO ESATTAMENTE)...}}

\end{proof}

\begin{remark}
Thanks to Definition \ref{def_D(t,x)} we can extend the result proved in Theorem \ref{exist_dens} to every $(t, x) \in \left[0,T\right] \times D$. This allows us to conclude that the image law of the solution of problem \eqref{eq:nl2} is absolutey continuous with respect to the Lebesgue measure on $\mathbb{R}$.
\end{remark}

\subsection{Smoothness of the density for the solution of the non homogeneous equation}
\label{sdsnhe}

In case the coefficients of the equation are more regular than just Lipschitz continuous, we can
prove the smoothness of the density of the random variable $u(t,x)$ that solves Eq.\eqref{eq:nl} for $(t,x)\in \left[0,T\right] \times D$,
analogously with the result in Lemma \ref{esis_dens_Z}. 

%We conclude with the refinement of the existence result for the density prove our main result.%Theorem \eqref{dens_nonlin_case}.

\begin{theorem}
Assume that the condition \textbf{(g2)} holds.
Then for each $(t,x)\in \left[0,T\right] \times D$, the random variable $u(t,x)$ has a density with respect to the Lebesgue measure
that is infinitely differentiable.
\end{theorem}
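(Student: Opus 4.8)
The plan is to invoke the second half of Theorem \ref{theo:dens}: we already know from Corollary \ref{cor} that $u(t,x) \in \bD^\infty(\cH)$ for every $(t,x) \in [0,T] \times \bar D$ (this uses precisely the stronger assumption \textbf{(g2)} via Lemma \ref{l:0107-1} and Remark \ref{r:0107-1}), so it only remains to prove the negative moment bound
\begin{equation*}
\bE\left( \|Du(t,x)\|_{\cH}^{-2p} \right) < +\infty \qquad \text{for every } p > 1.
\end{equation*}
First I would reduce, exactly as in the proof of Theorem \ref{exist_dens}, to the boundary: by Proposition \ref{def_D(t,x)} the Malliavin derivative $D_{r,\sigma}u(t,x)$ for $x\in D$ is expressed through the boundary process $\varphi = Du(t,\xi)$, $\xi \in \partial D$, so it suffices to get a quantitative lower bound on $\|Du(t,\xi)\|_{\cH}^2$ valid a.e. in $\xi$. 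As there, I restrict the $\cH$-norm to a short time window $(t-\delta,t)$, writing $\|Du(t,\xi)\|^2_{\cH} \ge \|Du(t,\xi)\|^2_{\cH_\delta}$, split $u = Z + G$ with $G(t,\xi) = \int_0^t\int_{\partial D} p_N(t-s,\xi,\bar y) g(u(s,\bar y))\,{\rm d}\bar y\,{\rm d}s$, and use
\begin{equation*}
\|Du(t,\xi)\|^2_{\cH_\delta} \ge \tfrac12 \|DZ(t,\xi)\|^2_{\cH_\delta} - \|DG(t,\xi)\|^2_{\cH_\delta}.
\end{equation*}

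The heart of the argument is then the tail estimate \eqref{stima_P} obtained in the proof of Theorem \ref{exist_dens}: with the choice $\tfrac12 C_{T,x,\delta_0,\alpha}\,\delta^{2H-1} = 2\veps$ one has, for all small $\veps$,
\begin{equation*}
\bP\left( \|Du(t,\xi)\|^2_{\cH} < \veps \right) \le C\,\veps^{\,\tilde p\,(3-2H)/(2H-1)}.
\end{equation*}
Since $\tilde p$ can be taken arbitrarily large (the moment estimates of Lemmas \ref{Du_s} and \ref{DF} hold for every $\tilde p$) and the exponent $(3-2H)/(2H-1)$ is strictly positive for $H\in(\tfrac12,1)$, the probability that $\|Du(t,\xi)\|^2_{\cH}$ is small decays faster than any polynomial in $\veps$. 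Writing
\begin{equation*}
\bE\left( \|Du(t,\xi)\|_{\cH}^{-2p} \right) = \int_0^\infty \bP\left( \|Du(t,\xi)\|_{\cH}^{-2p} > \lambda \right) {\rm d}\lambda
\le 1 + \int_1^\infty \bP\left( \|Du(t,\xi)\|^2_{\cH} < \lambda^{-1/p} \right) {\rm d}\lambda,
\end{equation*}
the super-polynomial decay makes the integral converge once $\tilde p$ is chosen large enough that $\tilde p(3-2H)/(2H-1)\cdot \tfrac1p > 1$. This gives $\bE(\|Du(t,\xi)\|_{\cH}^{-2p})<\infty$ for every $p$, a.e. $\xi \in \partial D$, and then the representation formula \eqref{der_sol_int} transfers the bound to interior points $x\in D$.

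I would then simply quote Theorem \ref{theo:dens}: $u(t,x)\in\bD^\infty(\cH)$ together with $\bE(|\det\Gamma|^{-p})<\infty$ for all $p>1$ — here $\Gamma = \|Du(t,x)\|^2_{\cH}$ is a scalar — yields that the law of $u(t,x)$ has a $C^\infty$ density with respect to Lebesgue measure on $\bR$. The main obstacle is making sure the constants in the moment bounds of Lemmas \ref{Du_s} and \ref{DF} (bounding $\bE\|DG(t,\xi)\|^{2\tilde p}_{\cH_\delta}$ by $c\,\delta^{2\tilde p H}$ or similar) are uniform in $\xi$ and hold for every $\tilde p\ge 1$ with an explicit $\delta$-power, because the whole negative-moment estimate hinges on trading an arbitrarily high power of $\delta$ against the lower bound $\|DZ(t,\xi)\|^2_{\cH_\delta}\ge \tilde C\,\delta^{2H-1}$ coming from Lemma \ref{esis_dens_Z}; everything else is a rerun of the computations already carried out for the existence of the density.
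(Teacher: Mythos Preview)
Your proposal is correct and follows essentially the same route as the paper: both arguments combine Corollary \ref{cor} ($u\in\bD^\infty$) with the tail estimate \eqref{stima_P} from the proof of Theorem \ref{exist_dens}, and then feed this into the second part of Theorem \ref{theo:dens}. The only cosmetic difference is that the paper cites \cite[Lemma 2.3.1]{Nualart2006} to pass from the small-ball estimate $\bP(\|Du\|^2_{\cH}<\veps)\le C\veps^{\tilde p(3-2H)/(2H-1)}$ to the negative-moment bound, whereas you write out that layer-cake integral explicitly; the two are the same computation, and your concern about uniformity of the constants in Lemmas \ref{Du_s} and \ref{DF} is already handled there (they are stated for all $p\ge 2$ with constants independent of $\xi$).
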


\begin{proof}
%As regards the \textit{smoothness} of the density, 
The idea is to apply Theorem \ref{theo:dens}. Let us at first consider $u(t, \xi)$ for $(t, \xi) \in \left[0,T\right] \times \partial D$. From Corollary \ref{cor} we have that  $u(t, \xi) \in \mathbb{D}^{\infty}$. It remains to prove that $\mathbb{E}(\|Du(t,\xi)\|)^{-p} < + \infty$ for every $p\ge 1$.
By Nualart\cite{Nualart2006}
Lemma 2.3.1, it suffices to prove that, for any $q \ge 2$, there exits $\veps_0(q)>0$ such that, for all $\veps < \veps_0$,
\begin{align*}
\mathbb{P}(\|Du(t,\xi)\|^2_{\mathcal{H}} < \veps) < \veps^q
\end{align*}
and this condition immediately follows from the estimate \eqref{stima_P} above,
choosing 
%
%{\color{red}{%ma se nel nostro caso $q=(1/H-1)p$, non abbiamo sempre che $q\ge 2$ pero'? 
%Si puo' usare $\tilde{p}$ invece che $p$ nella disugualianza di Chebychev per non fare confusione? e poi dire che scegliamo 
$\tilde{p}=\frac{q(2H-1)}{3-2H}$.
%}}

Notice that, as done for the above results, we can now extend this result to every $(t, x) \in \left[0,T \right] \times D$ and conclude that the density of the solution is infinitely differentiable with respect to the Lebesgue measure.
\end{proof}

\appendix
\section{Some supplementary lemmas}

The following lemmas are needed in the proof of Corollary \ref{co:stima}.

\begin{lemma}
\label{integral}
If $0 \le a \le d-1$, $0 \le b \le d-1$ then 
\begin{equation}
\int_{\partial D}\frac{{\rm d}y}{|x-y|^a |y-\xi|^b} \le 
\begin{cases}
c|x-\xi|^{d-1-a-b} \qquad \emph{if} \ a+b >d-1\\
c    \qquad \qquad \qquad \qquad \emph{if} \ a+b < d-1\\
\end{cases}
\end{equation}
%{\color{red}{controllare se $x \in \partial D --> $ uso questa cosa in Lemma 12.}}{\color{blue}secondo me c'\`e un misprint nel testo, ma non ho il libro sotto mano: nella prima diseguaglianza, dovrebbe essere $|x-\xi|$ (dato che $y$ lo stai integrando...)}
\end{lemma}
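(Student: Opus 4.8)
The plan is to treat this as a standard Riesz‑potential estimate on the compact smooth hypersurface $\partial D$: reduce it to the Euclidean model $\mathbb R^{d-1}$ via local coordinates, then split the domain of integration into the two regions near the singularities $y=x$ and $y=\xi$ and the remaining ``far'' region, and finally use the triangle inequality to bound each piece.

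\emph{Step 1 (surface‑measure scaling).} First I would record the only geometric input: since $\partial D$ is a smooth compact $(d-1)$‑dimensional manifold, a finite atlas of graph charts with uniformly bounded Jacobians shows it is Ahlfors $(d-1)$‑regular, i.e.\ there is $c>0$ with $\sigma(B(p,\rho)\cap\partial D)\le c\,\rho^{d-1}$ for all $p\in\partial D$ and $\rho>0$. A dyadic decomposition in $|p-y|$ then yields, for every $p\in\partial D$,
\[
\int_{B(p,\rho)\cap\partial D}\frac{\sigma({\rm d}y)}{|p-y|^{\kappa}}\le c\,\rho^{\,d-1-\kappa}\ \ (\kappa<d-1),\qquad
\int_{\partial D\setminus B(p,\rho)}\frac{\sigma({\rm d}y)}{|p-y|^{\kappa}}\le c\,\rho^{\,d-1-\kappa}\ \ (\kappa>d-1),
\]
together with $\int_{\partial D}|p-y|^{-\kappa}\,\sigma({\rm d}y)\le c$ whenever $\kappa<d-1$.

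\emph{Step 2 (the three‑region split).} Put $R=|x-\xi|$ (the degenerate case $x=\xi$ follows at once from the last estimate of Step 1) and write $\partial D=A\cup B\cup C$ with $A=B(x,R/2)\cap\partial D$, $B=B(\xi,R/2)\cap\partial D$, and $C$ the rest. On $A$ one has $|y-\xi|\ge R-|y-x|\ge R/2$, so
\[
\int_A\frac{\sigma({\rm d}y)}{|x-y|^a|y-\xi|^b}\le (R/2)^{-b}\int_{B(x,R/2)\cap\partial D}\frac{\sigma({\rm d}y)}{|x-y|^a}\le c\,R^{\,d-1-a-b}
\]
by Step 1 with $\kappa=a$; the bound over $B$ is the same by symmetry. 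On $C$ both $|x-y|\ge R/2$ and $|y-\xi|\ge R/2$ hold, and moreover $|x-y|\le|y-\xi|+R\le 3|y-\xi|$ (and symmetrically), whence $|x-y|^{-a}|y-\xi|^{-b}\le 3^{a}|y-\xi|^{-(a+b)}$ on $C$.

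\emph{Step 3 (conclusion).} If $a+b>d-1$, then $C\subset\partial D\setminus B(\xi,R/2)$, so Step 1 with $\kappa=a+b$ gives $\int_C\le c\,R^{\,d-1-a-b}$, and summing the three contributions yields the first case. If $a+b<d-1$, the same pointwise bound on $C$ and the finiteness estimate of Step 1 give $\int_C\le c$, while the contributions of $A$ and $B$ are $\le c\,R^{\,d-1-a-b}\le c\,(\operatorname{diam}D)^{\,d-1-a-b}$, a finite constant because the exponent is positive; this is the second case. The main obstacle is Step 1 — making the passage from $\mathbb R^{d-1}$ to the curved (but smooth and compact) surface $\partial D$ precise; once the Ahlfors‑regularity bounds are in hand everything else is elementary triangle‑inequality estimation. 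I would also note that the argument genuinely needs $a,b<d-1$, so that the potentials over $A$ and $B$ converge, which is exactly the range occurring in the applications of the lemma in Section~\ref{studio_kernel}.
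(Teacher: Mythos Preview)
The paper actually states Lemma \ref{integral} without proof: it is listed in the Appendix as one of ``the following lemmas \dots\ needed in the proof of Corollary \ref{co:stima}'' and no argument is supplied. So there is no proof in the paper to compare against; your write-up would in fact fill a gap.

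Your argument is the standard and correct one. The Ahlfors $(d-1)$-regularity of a smooth compact hypersurface gives the dyadic-annulus bounds of Step~1, and the three-region decomposition around the two singularities together with the comparability $|x-y|\asymp|y-\xi|$ on the far region is exactly how this Riesz-type convolution estimate is proved in $\bR^{d-1}$; the passage to the curved surface is only a matter of uniform chart Jacobians, as you say. One small remark: the lemma does not require $x,\xi\in\partial D$ (and in the paper it is applied with $x\in\bar D$), but this only makes things easier, since if one of the points is interior the corresponding ``singularity'' is harmless and the region $A$ (or $B$) contributes a bounded term. Your final observation is also well placed: the proof genuinely needs $a<d-1$ and $b<d-1$ strictly (otherwise the near-singularity integrals diverge), so the stated range $0\le a,b\le d-1$ is a little loose at the endpoints; in every application in Section~\ref{studio_kernel} the exponents are of the form $d-2\mu$ with $\tfrac12<\mu<1$, hence strictly below $d-1$, so this causes no trouble.
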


\begin{lemma}
\label{stim_kernel_bordo}
Let $x \in \partial D$, $y \in \partial D$. The following estimate holds
\begin{equation}
\left| \frac{\partial}{\partial \nu(x)}\Gamma(t,x,s,y) \right| \le c(t-s)^{-\mu}|x-y|^{2 \mu-d}
\end{equation}
\end{lemma}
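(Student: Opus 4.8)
The plan is to use the \emph{explicit} Gaussian form of the fundamental solution. Since the coefficients of $L=\tfrac12\Delta-\partial_t$ are constant, $\Gamma$ coincides with the kernel recalled above, $\Gamma(t,x,s,y)=(2\pi(t-s))^{-d/2}\exp\big(-|x-y|^2/(2(t-s))\big)$, so that $\nabla_x\Gamma=-\frac{x-y}{t-s}\,\Gamma$ and hence
\[
\frac{\partial}{\partial\nu(x)}\Gamma(t,x,s,y)=-\frac{\scal{\nu(x)}{x-y}}{t-s}\,\Gamma(t,x,s,y).
\]
First I would observe that the crude bound $\big|\tfrac{\partial}{\partial\nu(x)}\Gamma\big|\le|\nabla_x\Gamma|$ together with \eqref{stima_der_kernel_hom} only yields $c(t-s)^{-\mu}|x-y|^{2\mu-d-1}$, which is one power of $|x-y|$ too singular for the claim; the improvement comes entirely from the fact that, for \emph{boundary} points, $\scal{\nu(x)}{x-y}$ is quadratically small.

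So the key step is the geometric estimate
\[
|\scal{\nu(x)}{x-y}|\le C_D\,|x-y|^2\qquad\text{for all }x,y\in\partial D,
\]
with $C_D$ depending only on $D$. For $|x-y|$ bounded below by a fixed $\delta_0>0$ this is trivial, so only $y$ close to $x$ matters; fixing $x$ and choosing coordinates in which $x$ is the origin and $\nu(x)$ is the last coordinate direction, the $C^\infty$ boundary is locally the graph of a function $h$ with $h(0)=0$ and $\nabla h(0)=0$, and for $y=(y',h(y'))$ one gets $|\scal{\nu(x)}{x-y}|=|h(y')|\le C|y'|^2\le C|x-y|^2$ by Taylor's formula and the bound on $D^2h$; a compactness/covering argument over $\partial D$ makes $C_D$ uniform. (Convexity of $D$, assumed throughout, even fixes the sign of $\scal{\nu(x)}{x-y}$, but only its magnitude is needed here.)

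Granting this, I would conclude with a one-variable estimate on the Gaussian. Writing $a=t-s$, $r=|x-y|$, $\lambda=r^2/(2a)$, the two displays above give
\[
\Big|\frac{\partial}{\partial\nu(x)}\Gamma(t,x,s,y)\Big|\le C_D\,\frac{r^2}{a}\,(2\pi a)^{-d/2}e^{-r^2/(2a)}\le C\,\lambda\,a^{-d/2}e^{-\lambda}.
\]
Using the identity $a^{-d/2}=(2\lambda)^{d/2-\mu}\,a^{-\mu}r^{2\mu-d}$, the right-hand side becomes a bounded multiple of $\lambda^{\,1+d/2-\mu}e^{-\lambda}\cdot a^{-\mu}r^{2\mu-d}$, and since $\lambda\mapsto\lambda^{\,1+d/2-\mu}e^{-\lambda}$ is bounded on $[0,\infty)$ for every $\mu<1$ this gives $\big|\tfrac{\partial}{\partial\nu(x)}\Gamma(t,x,s,y)\big|\le c\,(t-s)^{-\mu}|x-y|^{2\mu-d}$ for $0<\mu<1$. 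This is precisely the passage from a Gaussian bound to a power bound already used in the excerpt (the elementary inequality $t^{-n/2}e^{-kr^2/t}\le Kt^{-\mu}r^{-n+2\mu}$ recalled before Corollary~\ref{c.app1}), applied once the extra factor $r^2/a$ produced by the normal derivative has been absorbed by the geometric estimate.

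The main---indeed the only genuine---obstacle is the geometric estimate $|\scal{\nu(x)}{x-y}|\le C_D|x-y|^2$; everything else is a direct computation. It is classical for $C^2$ hypersurfaces, but it must be stated carefully, with the uniformity of $C_D$ in $x\in\partial D$ justified by compactness. It is exactly this quadratic gain that brings $\tfrac{\partial}{\partial\nu(x)}\Gamma$ to the same order of singularity as $\Gamma$ itself, which is what allows the Neumann kernel $M=\tfrac{\partial\Gamma}{\partial\nu}+\beta\Gamma$ to satisfy the same estimate as $\Gamma$ in the proof of Corollary~\ref{co:stima}.
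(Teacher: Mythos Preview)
Your proof is correct and follows essentially the same route as the paper. The paper computes the normal derivative explicitly and invokes the bound $|\cos(N_x,\vec{xy})|\le C|x-y|$ (citing Friedman), which is exactly your geometric estimate $|\scal{\nu(x)}{x-y}|\le C_D|x-y|^2$ rewritten; it then applies the same Gaussian-to-power inequality you use. Your version is more self-contained in that you actually justify the geometric estimate via local graphs and Taylor's formula rather than deferring to a reference.
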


\begin{proof}
Proceeding as in the proof of \cite[Theorem 5.2.1]{Friedman1964}, we write
\begin{align*}
\frac{\partial}{\partial \nu(x)}\Gamma(t,x,s,y) &= \frac{\partial}{\partial \nu(x)}Z(t,x,s,y) \\
&=- \frac {(2 \pi)^{-\frac d2}}{2}(t-s)^{-\frac d2 -1} \exp\left(-\frac{|x-y|^2}{2(t-s)}\right) |x-y| \cos(N_{x}, \vec{xy}) 
\end{align*}
where with $N_x$ we denote the normal vector in $x$ and $\vec{xy}$ denotes the vector which connects $x$ with $y$. Since 
$$|\cos(N_n,\vec{xy})| \le C|x-y|,$$
we get
\begin{align*}
|\frac{\partial}{\partial \nu(x)}Z(t,x,s,y)| &\le C(t-s)^{-\frac d2 -1} \exp\left(-\frac{|x-y|^2}{2(t-s)}\right) |x-y|^2\\
&\le C(t-s)^{- \lambda}|x-y|^{-d+ 2 \lambda}
\end{align*}
where in the last inequality we have used the following analytic inequality:
%\begin{Lemma}
for any $\alpha > 0$ and $x > 0$ it holds 
\begin{equation}\label{barbu:e1}
x^{\alpha} e^{-x} \le \alpha^{\alpha} e^{-\alpha}
\end{equation}
%\end{Lemma}
%
%\medskip
%
and so we obtain that for any $\mu > 0$ and $n \ge 1$ there exists a constant $K = K(n,\mu)$ such that for any $r > 0$:
\begin{equation}\label{c1}
t^{-n/2} e^{-k r^2/t} \le K t^{-\lambda} r^{-n+2\lambda}.
\end{equation}

\end{proof}

The following result is needed in order to prove the Malliavin regularity of the solution.

\begin{lemma}
\label{prel_lemma}
Under the Hypothesis \textbf{(a1')}, it holds 
\begin{equation}
\label{preliminary_lemma}
\int_S \left(\int_{\partial D}|x-\bar{y}|^{2 \mu-d}|\alpha(\sigma, \bar{y})| \, {\rm d}\bar{y} \right)^2 \mu({\rm d}\sigma) < C \|\alpha\|^2_{L^\theta(\partial D;L^2(S))}.
\end{equation}
\end{lemma}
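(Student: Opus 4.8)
The plan is to decouple the $\sigma$-integral from the $\bar y$-integral and reduce the whole estimate to a single weakly-singular integral over the hypersurface $\partial D$. Set $\psi(\bar y) := \|\alpha(\cdot,\bar y)\|_{L^2(S)}$, so that $\|\psi\|_{L^\theta(\partial D)} = \|\alpha\|_{L^\theta(\partial D; L^2(S))}$ by definition of the mixed norm. First I would apply Minkowski's integral inequality for the $L^2(S)$-norm to pull that norm inside the spatial integral:
\[
\left( \int_S \left( \int_{\partial D} |x-\bar y|^{2\mu-d}\,|\alpha(\sigma,\bar y)|\,{\rm d}\bar y \right)^2 \mu({\rm d}\sigma) \right)^{1/2} \le \int_{\partial D} |x-\bar y|^{2\mu-d}\,\psi(\bar y)\,{\rm d}\bar y .
\]
Then Hölder's inequality on $\partial D$ with the exponent $\theta$ and its conjugate $\theta'$ gives
\[
\int_{\partial D} |x-\bar y|^{2\mu-d}\,\psi(\bar y)\,{\rm d}\bar y \le \left( \int_{\partial D} |x-\bar y|^{(2\mu-d)\theta'}\,\sigma({\rm d}\bar y) \right)^{1/\theta'} \|\psi\|_{L^\theta(\partial D)},
\]
so, after squaring, the lemma follows once one shows $\sup_{x\in\bar D}\int_{\partial D} |x-\bar y|^{(2\mu-d)\theta'}\,\sigma({\rm d}\bar y) < \infty$.

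To establish this uniform bound I would cover the compact smooth hypersurface $\partial D$ by finitely many coordinate charts; in each chart the surface measure is comparable to $(d-1)$-dimensional Lebesgue measure and $|x-\bar y|$ is comparable to the Euclidean distance of the local coordinates, so the integral is dominated by a finite sum of integrals of the form $\int_{\{|u|\le R\}} |u|^{-\gamma}\,{\rm d}u$ over $\bR^{d-1}$ with $\gamma = (d-2\mu)\theta'$, plus a bounded far-field contribution. Such an integral is finite exactly when $\gamma < d-1$, and a standard continuity/compactness argument then upgrades pointwise finiteness to a bound uniform in $x\in\bar D$. The condition $\gamma < d-1$ is precisely where Hypothesis \textbf{(a1')} enters: a direct computation shows $(d-2\mu)\theta' < d-1$ is equivalent to $\theta > \frac{d-1}{2\mu-1}$, and since in all applications of this lemma the exponent $\mu$ may be chosen in $(\tfrac12,H)$ arbitrarily close to $H$, the hypothesis $\theta > \frac{d-1}{2H-1}$ guarantees it.

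The routine parts are the two functional-analytic inequalities; the point that deserves care — and the main obstacle — is the uniformity in $x$ of the weak-singularity estimate on $\partial D$, in particular for $x\in\partial D$, where one must check that the intrinsic geometry of the boundary near $x$ does not destroy the comparison with the flat model. This is classical for $C^\infty$ compact boundaries but should be made explicit via the chart argument above; note also that for $x$ in the open set $D$ the estimate is immediate, since then $|x-\bar y|$ is bounded below by ${\rm dist}(x,\partial D)>0$, so the only genuine work is near and on $\partial D$.
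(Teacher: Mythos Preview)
Your argument is correct and follows the same two-step scheme as the paper: H\"older on $\partial D$ with exponents $(\theta,\theta')$, then finiteness of $\int_{\partial D}|x-\bar y|^{(2\mu-d)\theta'}\,\sigma({\rm d}\bar y)$. Two differences are worth flagging. First, the paper applies H\"older pointwise in $\sigma$ and only afterwards integrates over $S$; this produces the norm $\|\alpha\|_{L^2(S;L^\theta(\partial D))}$, which it then identifies with $\|\alpha\|_{L^\theta(\partial D;L^2(S))}$ --- a step that is in general only an inequality and, when $\theta>2$ (the typical case under \textbf{(a1')} for $d\ge 3$), points the wrong way. Your ordering --- Minkowski in $L^2(S)$ first, then a single H\"older on $\partial D$ --- lands directly on the stated mixed norm and avoids this issue entirely. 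Second, for the singular integral the paper simply invokes Lemma~\ref{integral} (the case $b=0$) rather than carrying out a chart argument; your sketch is correct but more than is needed here, and the criterion you derive, $(d-2\mu)\theta'<d-1 \Longleftrightarrow \theta>\tfrac{d-1}{2\mu-1}$, is exactly what that lemma encodes.
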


\begin{proof}
Using the H\"{o}lder inequality with $\theta$ satisfying Hypothesis \textbf{(a1')}%$q>\frac{d-1}{2H-1}>d-1$
, we get
\begin{multline*}
\int_S\left(\int_{\partial D}|\xi-\bar y|^{2 \mu-d}\alpha(\sigma, \bar{y}) {\rm d}\bar{y}\right)^2\mu({\rm d}\sigma)\\
\le\int_S \left(\int_{\partial D}|\xi- \bar{y}|^{\theta'(2 \mu-d)}{\rm d}\bar{y} \right)^{\frac {2}{\theta'}}\left(\int_{\partial D}|\alpha(\sigma, \bar{y})|^{\theta} {\rm d}\bar{y}\right)^{\frac {2}{\theta}}\mu({\rm d}\sigma)\\
\le C \int_S\left(\int_{\partial D}|\alpha(\sigma, \bar{y})|^{\theta} {\rm d}\bar{y}\right)^{\frac {2}{\theta}}\mu({\rm d}\sigma)
% \le C \left( \int_S\int_{\partial D}|\alpha( \sigma, \bar {y})|^{\theta} {\rm d}\bar{y} \ \mu({\rm d}\sigma)\right)^{\frac {p}{\theta}}
=C \|\alpha\|^2_{L^{\theta}(\partial D;L^2(S))}
\end{multline*}
Notice that the condition imposed on $\theta$ is needed in order to apply Lemma \ref{integral} and so have the boundedness of the integral
$$\int_{\partial D}|\xi- \bar{y}|^{\theta'(2 \mu-d)}{\rm d}\bar{y}.$$
\end{proof}

%In this section we prove a couple of lemmas, which 
The following two lemmas are necessary for the proof of Theorem \ref{exist_dens}.
%; the first one suitably modifies the result in Theorem \ref{D12p} to the space $\cH_\delta$.

\begin{lemma}
\label{Du_s}
%The random variable $F(t,x)$, $x > 0$ and $t \in [0,T]$, belongs to the space $\mathbb{D}^{1, 2p}$ for every $p>1$.
For every $\xi \in \partial D$ and $t \in [0,T]$
$$\mathbb{E}\|DG(t,\xi)\|^{p}_{\mathcal{H}_{\delta}} %< C_{L,p} \delta^{p(1-\mu)}
\le C_{L,p} \delta^{p} ,$$
for every $p \ge2$,
%$p>p^*=d+1.$
where 
\begin{align*}
G(t, \xi):=\int_0^t\int_{\partial D}p_N(t-s, \xi, \bar y)g(u(s,\bar y)){\rm d}\bar y{\rm d}s.
\end{align*}
\end{lemma}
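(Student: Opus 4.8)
The plan is to start from the expression for $DG(t,\xi)$, which by differentiating the definition of $G$ is
\begin{align*}
D_{r,\sigma}G(t,\xi) = \int_r^t \int_{\partial D} p_N(t-s,\xi,\bar y)\,\partial_u g(u(s,\bar y))\,D_{r,\sigma}u(s,\bar y)\,{\rm d}\bar y\,{\rm d}s.
\end{align*}
The $\cH_\delta$-norm only sees the variable $r$ on the interval $(t-\delta,t)$, and since the inner $s$-integral runs over $r<s<t$, the whole object is supported in a time-window of length $\delta$. First I would bound $|\partial_u g|\le L$ using {\bf (g1)}, so that
\begin{align*}
\|DG(t,\xi)\|_{\cH_\delta} \le L \left\| \pmb 1_{(t-\delta,t)}(\cdot)\int_\cdot^t \int_{\partial D} p_N(t-s,\xi,\bar y)\,D_\cdot u(s,\bar y)\,{\rm d}\bar y\,{\rm d}s \right\|_{\cH}.
\end{align*}
Then I would take the $p$-th moment, apply Minkowski's integral inequality to pull the $\cH$-norm (in the $r$-variable) inside the $s$- and $\bar y$-integrals, and use the estimate \eqref{stima_ok} on $p_N$ together with the already-established bound on $\mathbb E\|Du(s,\bar y)\|^p_{\cH}$ coming from Theorem \ref{D12_sol} (equivalently, the uniform bound $\sup_n\|\varphi_n\|<\infty$ in $L^p_W((0,T)\times\partial D;\cH)$).

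The key point is to keep careful track of how the restriction to the short interval $(t-\delta,t)$ produces the factor $\delta^p$. Concretely, after inserting \eqref{stima_ok} the relevant scalar quantity is a double time-integral over $(t-\delta,t)$ of a product of kernels $(t-s)^{-\mu}(t-s')^{-\mu}|s-s'|^{2H-2}$ against the spatial factor, exactly of the same shape as the integrals computed in the proof of Proposition \ref{cont_convoluzione} and in Lemma \ref{lem_5}, but with the domain of integration shrunk from $(0,t)$ to $(t-\delta,t)$. Performing the same Beta-function computation on the shorter interval yields a bound of order $\delta^{2H-2\mu}$ for the squared $\cH_\delta$-norm of the kernel part; the additional $s$-integration (of length $\le\delta$) and the spatial bound using {\bf (a1')} and Lemma \ref{prel_lemma} (or the interior estimate \eqref{spatial_term}) then contribute the remaining powers of $\delta$, so that altogether $\mathbb E\|DG(t,\xi)\|^p_{\cH_\delta}\le C_{L,p}\,\delta^{p}$, with the constant uniform in $\xi\in\partial D$ because \eqref{stima_ok} and Lemma \ref{prel_lemma} are uniform in the spatial variable.

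The main obstacle I anticipate is bookkeeping the exponents of $\delta$ correctly: one has to verify that choosing $\mu$ close enough to $H$ (possible since $H>\tfrac12$) the kernel integral over the window contributes a strictly positive power of $\delta$, and that combining it with the $L$-dependent Lipschitz factor, the $s$-integration, and the spatial $L^2(S)$-bound gives precisely $\delta^p$ (rather than $\delta^{p(H-\mu+\text{something})}$). A secondary technical point is the use of Minkowski's inequality in $L^p(\Omega)$ together with the a.s. bound on $\|Du(s,\bar y)\|_{\cH}$: since $\partial_u g(u(s,\bar y))$ is bounded but random, I would absorb it by its sup-norm before taking expectations, which is harmless under {\bf (g1)}. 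Everything else is a repetition of computations already carried out earlier in the paper.
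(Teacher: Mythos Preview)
Your overall strategy matches the paper's: write $D_{r,\sigma}G$, bound $|\partial_u g|\le L$, pull the $\cH_\delta$-norm inside the $(s,\bar y)$-integral via Minkowski, then combine the kernel estimate \eqref{stima_ok} with the $L^p_W$-bound on $Du$ from Theorem~\ref{D12_sol}. That is exactly what the paper does.

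There is, however, a genuine confusion in your middle paragraph. After Minkowski, the situation is
\[
\|DG(t,\xi)\|_{\cH_\delta}\ \le\ L\int_{t-\delta}^t\int_{\partial D} p_N(t-s,\xi,\bar y)\,\|Du(s,\bar y)\|_{\cH}\,{\rm d}\bar y\,{\rm d}s,
\]
and there is \emph{no} ``double time-integral of $(t-s)^{-\mu}(t-s')^{-\mu}|s-s'|^{2H-2}$'' to perform: the $\cH$-inner-product structure (and hence the factor $|r-r'|^{2H-2}$) lives entirely in the $(r,\sigma)$ variables and has already been absorbed into the scalar quantity $\|Du(s,\bar y)\|_{\cH}$. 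What remains is a single $s$-integral of $p_N$ over $(t-\delta,t)$, to which the paper applies H\"older (in $(s,\bar y)$) and then \eqref{stima_ok}, obtaining
\[
\Bigl(\int_{t-\delta}^t (t-s)^{-\mu}\,{\rm d}s\Bigr)^{p}\ \lesssim\ \delta^{p(1-\mu)},
\]
together with a bounded spatial factor from Lemma~\ref{integral}. So the $\delta$-power comes from the length of the $s$-window alone, not from any Beta-function computation in the $\cH$-variables, and there is no need to tune $\mu$ close to $H$; any $\mu\in(\tfrac12,1)$ making \eqref{stima_ok} and Lemma~\ref{integral} valid works. Your first paragraph is correct; the second should be discarded and replaced by this much simpler accounting.
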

\begin{proof}
%Recall from Theorem \ref{D12_sol} that
We have that
\begin{align*}
\pmb{1}_{(t- \delta, t)}(s)D_{r, \sigma}G(t, \xi)= \pmb{1}_{(t- \delta, t)}(s)\int_0^t\int_{\partial D}p_N(t-s, \xi,\bar y)\partial_ug(u(s,\bar y))D_{r, \sigma}u(s,\bar y){\rm d}\bar y{\rm d}s
\end{align*}
and since $ D_{r, \sigma}u(t, \xi)=0$ for every $t<r$, we get
$$=\pmb{1}_{(t- \delta, t)}(s)\int_{t- \delta}^t\int_{\partial D}p_N(t-s, \xi,\bar y)\partial_ug(u(s,\bar y))D_{r, \sigma}u(s,\bar y){\rm d}\bar y{\rm d}s$$

Using H\"older inequality and the estimates on the kernel we get
\begin{multline*}
\mathbb{E}\|DG(t, \xi)\|^p_{\mathcal{H}_{\delta}} 
\le L^p\mathbb{E}\left(\int_{t-\delta}^t\int_{\partial D}p_N(t-s, \xi,\bar y) \|D_{r, \sigma}u(s,\bar y)\|_{\mathcal{H}} \, {\rm d}\bar y \, {\rm d}s \right)^p
\\
\le L^p \left(\int_{t- \delta}^t\int_{\partial D}p_N(t-s, \xi, \bar y)^q \, {\rm d}\bar y \, {\rm d}s \right)^{\frac pq}
\left( \int_{t- \delta}^t \int_{\partial D}\mathbb{E}\|D_{r, \sigma}u(s,\bar y)\|^p_{\mathcal{H}} \, {\rm d}\bar y \, {\rm d}s \right)
\end{multline*}
The term $\mathbb{E}\|D_{r, \sigma}u(s,\bar y)\|^p_{\mathcal{H}}$ is finite thanks to Theorem \ref{D12_sol},
so we obtain
\begin{align*}
\mathbb{E}\|DG(t, \xi)\|^p_{\mathcal{H}_{\delta}} \le  L^p \left(\int_{t- \delta}^t\int_{\partial D}p_N(t-s, \xi, \bar y){\rm d}\bar y\,{\rm d}s \right)^{p},
\end{align*}
and using \eqref{stima_ok} it follows that
\begin{align*}
\left(\int_{t- \delta}^t\int_{\partial D}p_N(t-s, \xi, \bar y)^q{\rm d}\bar y\,{\rm d}s \right)^{p}
&\le \left( \int_{\partial D}|\xi-\bar y|^{(2 \mu-d)}{\rm d}\bar y\right)^{p} \left( \int_{t- \delta}^t(t-s)^{- \mu }{\rm d}s\right)^{p}\\
&\le C \delta^{p(1-\mu)} \le C \delta^p,
\end{align*}
and the proof is complete.
\end{proof}

Next lemma is concerned with the norm of the Malliavin derivative $D Z(t,x)$ of the stochastic convolution process $Z(t,x)$ in the space $\cH_\delta$ and
is a refinement of the results in Lemma \ref{esis_dens_Z}.

\begin{lemma}
\label{DF}
%Given $\veps >0$, $\|Du_s\|_{\mathcal{H}_{\veps}} \approx \veps^H$ for $\veps \rightarrow 0$.
Given $\delta_0>0$, for every $\delta < \delta_0$ and every $x \in D$ and $t \in ]0,T]$,
\begin{align*}
\|DZ (t,x)\|^2_{\mathcal{H}_{\delta}} \ge C_{\delta_0,T, \alpha_0} \delta^{2H-1}.
\end{align*}
\end{lemma}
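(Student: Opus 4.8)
The plan is to bound $\|DZ(t,x)\|^2_{\cH_\delta}$ from below by first proving a pointwise lower bound on the Malliavin derivative for short time lags, and then reducing to the elementary scaling of the fractional covariance kernel $|s-r|^{2H-2}$ over the short interval $(t-\delta,t)$. The argument is carried out for a boundary point $x\in\partial D$ (understood in the a.e.\ sense, as throughout), which is the case entering Theorem~\ref{exist_dens}, and for $\delta$ small enough; since $t$ is fixed in the application this restriction is harmless. Using
$$D_{s,\sigma}Z(t,x)=\int_{\partial D}p_N(t-s,x,\bar y)\,\alpha(\sigma,\bar y)\,{\rm d}\bar y,$$
and recalling that $p_N>0$ (Lemma~\ref{lower}) and $\alpha\ge\alpha_0>0$ (hypothesis \textbf{(a2)}), the integrand of
$$\|DZ(t,x)\|^2_{\cH_\delta}=\int_{t-\delta}^t\int_{t-\delta}^t\int_S D_{s,\sigma}Z(t,x)\,D_{r,\sigma}Z(t,x)\,|s-r|^{2H-2}\,\mu({\rm d}\sigma)\,{\rm d}r\,{\rm d}s$$
is nonnegative, so it suffices to estimate each factor from below.

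The crucial step is to show that there are constants $c_0>0$ and $h_0>0$ with $D_{s,\sigma}Z(t,x)\ge c_0\,(t-s)^{-1/2}$ for every $\sigma\in S$, every $x\in\partial D$ and every $s$ with $0<t-s<h_0$. Putting $h=t-s$ and invoking the lower bound of Lemma~\ref{lower} together with \textbf{(a2)},
$$D_{s,\sigma}Z(t,x)\ge C_1\alpha_0\,h^{-d/2}\int_{\partial D}\exp\!\Big(-\tfrac{C_2|x-\bar y|^2}{h}\Big)\,{\rm d}\bar y.$$
I would then restrict the surface integral to $\partial D\cap B(x,\sqrt h)$, on which the exponential is at least $e^{-C_2}$; since $\partial D$ is a compact $C^\infty$ hypersurface, there exist $c>0$ and $h_0>0$ such that the surface measure of $\partial D\cap B(x,\sqrt h)$ is at least $c\,h^{(d-1)/2}$ for every $x\in\partial D$ and $h\le h_0$. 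Hence $D_{s,\sigma}Z(t,x)\ge C_1\alpha_0\,c\,e^{-C_2}\,h^{-d/2}h^{(d-1)/2}=c_0\,h^{-1/2}$, uniformly in $\sigma$ and in $x\in\partial D$.

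Inserting this bound, and taking $\delta<\min(h_0,t)$, one has $t-s,\,t-r\in(0,\delta)$, so $D_{s,\sigma}Z(t,x)\,D_{r,\sigma}Z(t,x)\ge c_0^2\,(t-s)^{-1/2}(t-r)^{-1/2}\ge c_0^2\,\delta^{-1}$, and therefore
$$\|DZ(t,x)\|^2_{\cH_\delta}\ge c_0^2\,\mu(S)\,\delta^{-1}\int_{t-\delta}^t\int_{t-\delta}^t|s-r|^{2H-2}\,{\rm d}r\,{\rm d}s.$$
The substitution $s=t-a$, $r=t-b$ gives $\int_0^\delta\int_0^\delta|a-b|^{2H-2}\,{\rm d}a\,{\rm d}b=\delta^{2H}/\big(H(2H-1)\big)$, the integral being finite exactly because $H>\tfrac12$. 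This yields $\|DZ(t,x)\|^2_{\cH_\delta}\ge C_{\delta_0,T,\alpha_0}\,\delta^{2H-1}$ with $C_{\delta_0,T,\alpha_0}=c_0^2\mu(S)/\big(H(2H-1)\big)$, as claimed. (Alternatively one may keep $(t-s)^{-1/2}(t-r)^{-1/2}$ inside the integral and rescale the full double integral at once; by homogeneity it equals a constant times $\delta^{2H-1}$, which avoids the crude bound $\delta^{-1}$.)

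The main obstacle is the pointwise-in-time lower bound of the second step. The only lower bound available for the Poisson kernel carries the Gaussian weight $\exp(-C_2|x-\bar y|^2/(t-s))$, which degenerates as $s\uparrow t$; the remedy is to integrate only over the sliver of $\partial D$ lying within distance $\sqrt{t-s}$ of $x$, whose surface measure is controlled by the smoothness of $\partial D$ — and this is precisely why the base point must sit on $\partial D$, and why \textbf{(a2)} together with the compactness of $\partial D$ deliver the uniformity in $\sigma\in S$ and in $x$. Everything else reduces to the routine scaling of $|s-r|^{2H-2}$ on $(t-\delta,t)$.
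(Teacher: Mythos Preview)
Your proof is correct and follows the same overall strategy as the paper: invoke the Gaussian lower bound of Lemma~\ref{lower} together with hypothesis \textbf{(a2)}, discard all of $\partial D$ except a small cap near $x$, and reduce to the elementary scaling $\int_0^\delta\!\int_0^\delta |a-b|^{2H-2}\,{\rm d}a\,{\rm d}b = \delta^{2H}/(H(2H-1))$.

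The difference is in how the cap is chosen. The paper fixes its radius at $\delta$ throughout, obtaining a surface-measure factor of order $\delta^{d-1}$ but leaving the Gaussian weight $\exp(-\delta^2/(t-s))$ inside the time integral, which then has to be handled alongside $(t-s)^{-d/2}$. You instead let the radius depend on the time lag, taking it equal to $\sqrt{t-s}$; this turns the Gaussian weight into the harmless constant $e^{-C_2}$ and yields the clean pointwise bound $D_{s,\sigma}Z(t,x)\ge c_0\,(t-s)^{-1/2}$ before any integration. Your variant is more transparent and sidesteps the competition between $\sigma^{-d/2}$ and $\exp(-\delta^2/\sigma)$ that appears in the paper's final double integral. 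Both arguments need the base point on $\partial D$ for the uniform lower bound on the surface measure of the cap; although the lemma as stated writes $x\in D$, its only use in Theorem~\ref{exist_dens} is indeed at boundary points, as you correctly observe.
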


\begin{proof}

Proceeding as in the proof of Lemma \ref{esis_dens_Z}, we get

\begin{align*}
\|D \, Z(t,x)\|^2_{\cH_{\delta}} =& \int_{t-\delta}^t \int_{t-\delta}^t \int_S D_{s,\sigma}Z(t,x) \, D_{r,\sigma}Z(t,x) \, |s-r|^{2H-2} \, \mu({\rm d}\sigma) \, {\rm d}r \, {\rm d}s
\\
%\\
%=& \int_0^t \int_0^t \int_S \left( \int_{\partial \bR_+^d} p_N(t-s,x,\bar y) \alpha(\sigma,\tilde y) \, {\rm d}\bar y \right) \, \left( \int_{\partial \bR_+^d} p_N(t-r,x,\bar y) \alpha(\sigma,\tilde y) \, {\rm d}\bar y \right) \, |s-r|^{2H-2} \, \mu({\rm d}\sigma) \, {\rm d}r \, {\rm d}s
%\\
%&\ge C_1^2 \int_0^t \int_0^t (t-s)^{-\frac d2} (t-r)^{- \frac d2} |s-r|^{2H-2} \left[\int_S\left(\int_{\partial D} e^{-c_2 \frac{|x-\bar{y}|^2}{t-s}} \alpha(\sigma, \bar{y}){\rm d}\bar{y} \right)\left(\int_{\partial D}e^{-c_2 \frac{|x-\bar{y}|^2}{t-r}} \alpha(\sigma, \bar{y}){\rm d}\bar{y} \right)\mu({\rm d}\sigma)\right] {\rm d}r {\rm d}s
&\ge C \delta^{d-1}\int_{t-\delta}^t \int_{t-\delta}^t(t-s)^{-\frac d2}(t-r)^{-\frac d2}|s-r|^{2H-2}\exp\left({-\frac{\delta^2}{t-s}}\right)\exp\left({-\frac{\delta^2}{t-r}}\right) \, {\rm d}r \, {\rm d}s
\\
&= C \delta^{d-1}\int_0^{\delta} \int_0^{\delta}\sigma^{-\frac d2}\rho^{-\frac d2}|\rho-\sigma|^{2H-2}\exp\left({-\frac{\delta^2}{\sigma}}\right)\exp\left({-\frac{\delta^2}{\rho}}\right) \, {\rm d}\sigma \, {\rm d}\rho
\\
&\ge C \delta^{d-1}e^{-2\delta}\int_0^{\delta} \int_0^{\delta}\sigma^{-\frac d2}\rho^{-\frac d2}|\rho-\sigma|^{2H-2} \, {\rm d}\sigma \, {\rm d}\rho
\\
&\ge C \delta^{-1}e^{-2\delta}\int_0^{\delta} \int_0^{\delta}|\rho-\sigma|^{2H-2} \, {\rm d}\sigma \, {\rm d}\rho
\ge C \delta^{-1}e^{-2\delta} \frac{\delta^{2H}}{2H(2H-1)} = C_{\delta_0,T, \alpha_0} \delta^{2H-1}.
\end{align*}
\end{proof}

\begin{lemma}\label{l-dopo}
Let $\{V_{n},\ n \ge 0\}$ be a sequence of processes in $L^p_W(0,T \times \partial D; \cH^{\otimes k})$, $p \ge 2$, {{$k\ge 0$ (in the case $k=0$ we mean $\mathcal{H}=\mathbb{R}$)}} defined recursively by
\begin{align*}
V_{n+1}(t,x) = V_0(t,x)+\int_r^t\int_{\partial D}p_N(t-s, x, \bar y) h_n(s, \bar y)V_n(s,\bar y) \, {\rm d} \bar y \, {\rm d}s
\end{align*}
and $V_0$ is given. Moreover, $h_n$ is a sequence of equi-bounded real-valued random processes.

Then there exists a suitable $\lambda$ in the definition of the norm $\|\cdot\|_{L^p_W}$ (see formula \eqref{e:0207-1}) 
and a constant $C < 1$ such that the following
estimate holds (compare for instance with \eqref{contraction}):
\begin{align}
\label{e:0207-2}
\| V_{n+1} - V_n\|_{L^p_W} \le C \, \|V_n - V_{n-1}\|_{L^p_W}.
\end{align}
\end{lemma}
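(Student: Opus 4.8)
The plan is to estimate the difference $V_{n+1}-V_n$ directly from the recursive definition, using the boundedness of the coefficients $h_n$ together with the Gaussian-type bound \eqref{stima_ok} on the kernel $p_N$, and then to absorb the resulting constant by a clever choice of the exponential weight $\lambda$ in the equivalent norm \eqref{e:0207-1}. First I would write
\begin{align*}
V_{n+1}(t,x)-V_n(t,x) = \int_r^t \int_{\partial D} p_N(t-s,x,\bar y)\bigl[h_n(s,\bar y)V_n(s,\bar y) - h_{n-1}(s,\bar y)V_{n-1}(s,\bar y)\bigr]\,{\rm d}\bar y\,{\rm d}s.
\end{align*}
Since the $h_n$ are equi-bounded by a constant, say $L$, and (in the application) $h_n$ does not actually depend on $n$ — or more generally one handles the extra term $\|h_n-h_{n-1}\|$ separately and it is dominated by $\|V_{n-1}-V_{n-2}\|$ through the Lipschitz dependence — the integrand is bounded by $L\,p_N(t-s,x,\bar y)\,|V_n(s,\bar y)-V_{n-1}(s,\bar y)|$ in $\cH^{\otimes k}$ norm.

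Next I would raise to the $p$-th power, apply H\"older's inequality in the variables $(s,\bar y)$ with exponents $q,q'$ (the conjugate pair) to split off $p_N$, and use \eqref{stima_ok} to bound
\begin{align*}
\left(\int_{t-\delta}^t\int_{\partial D} p_N(t-s,x,\bar y)^q\,{\rm d}\bar y\,{\rm d}s\right)^{p/q} \le C\,\left(\int_{\partial D}|x-\bar y|^{q(2\mu-d)}\,{\rm d}\bar y\right)^{p/q}\left(\int_0^t (t-s)^{-q\mu}\,{\rm d}s\right)^{p/q},
\end{align*}
which is finite as long as $\mu$ and $q$ are chosen appropriately close to their endpoints (so that $q\mu<1$ and $q(2\mu-d)>-(d-1)$ via Lemma \ref{integral}). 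The remaining factor is $\mathbb{E}\int_r^t\int_{\partial D}\|V_n(s,\bar y)-V_{n-1}(s,\bar y)\|^p_{\cH^{\otimes k}}\,{\rm d}\bar y\,{\rm d}s$; inserting the factor $e^{-\lambda s}e^{\lambda s}$ and bounding $e^{\lambda(t-s)}$ on the domain of integration, then integrating in $t$ over $[0,T]$ and in the remaining time variable, produces a gain of a negative power of $\lambda$. Concretely, the iterated time integral $\int_0^T e^{-\lambda t}\bigl(\int_0^t (t-s)^{-q\mu}e^{\lambda s}\,{\rm d}s\bigr)^{p/q}{\rm d}t$, after a change of variables, is bounded by $C\lambda^{(q\mu-1)p/q}\|V_n-V_{n-1}\|^p_{L^p_W}$, and since $q\mu<1$ the exponent is negative. (This is exactly the mechanism already invoked in the existence theorem, where the bound $C\lambda^{(\mu-1)/(p-1)}$ appears.)

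Choosing $\lambda$ large enough that the total constant $C\lambda^{\text{(negative power)}}$ is strictly less than $1$ yields \eqref{e:0207-2}. The main obstacle — or rather the only genuinely delicate point — is the bookkeeping of exponents: one must verify that there exists a valid choice of $\mu\in(\tfrac12,1)$ and conjugate exponent $q>1$ simultaneously making the spatial integral $\int_{\partial D}|x-\bar y|^{q(2\mu-d)}\,{\rm d}\bar y$ finite (uniformly in $x$, which for $x\in\partial D$ requires $q(d-2\mu)<d-1$, compatible with $q$ close to $1$ and $\mu$ close to $1/2$) and the temporal singularity $(t-s)^{-q\mu}$ integrable ($q\mu<1$). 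Both constraints are met by taking $\mu$ close to $1/2$ and $q$ close to $1$; once that is fixed, the rest is the routine weighted-norm contraction estimate. The case $k=0$ is identical with $\cH^{\otimes 0}=\bR$ and $\|\cdot\|_{\cH^{\otimes 0}}=|\cdot|$.
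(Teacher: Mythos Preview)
Your overall strategy is right, but the H\"older splitting you propose fails precisely in the range of exponents the lemma must cover. You write the conjugate pair as $(q,q')$ with $q'=p$, so $q=p/(p-1)$ is \emph{not} a free parameter: for $p=2$ you are forced to take $q=2$. Then the time factor carries the singularity $(t-s)^{-q\mu}=(t-s)^{-2\mu}$, and since the kernel bound \eqref{stima_ok} is only available for $\mu>\tfrac12$, this is never integrable. More generally your two constraints $q\mu<1$ and $q(d-2\mu)<d-1$ admit a common $\mu\in(\tfrac12,1)$ only when $p>d+1$; for $2\le p\le d+1$ the argument breaks down. Your remark that ``both constraints are met by taking $\mu$ close to $1/2$ and $q$ close to $1$'' implicitly treats $q$ as adjustable, which it is not once $p$ is fixed.

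The paper avoids this by a different split: instead of putting $p_N$ entirely into one factor, it treats $p_N(t-s,x,\bar y)\,{\rm d}\bar y\,{\rm d}s$ as a measure and applies H\"older to the exponential weight only. Concretely, one writes $p_N=p_N^{1/q}\,p_N^{1/p}$ and inserts $e^{\lambda s/p}\cdot e^{-\lambda s/p}$ to obtain
\begin{align*}
\left(\int p_N\,\|\tilde V\|\right)^p
\le \left(\int_0^t\!\!\int_{\partial D} p_N\,e^{\lambda s(q-1)}\,{\rm d}\bar y\,{\rm d}s\right)^{p/q}
      \int_0^t\!\!\int_{\partial D} p_N\,e^{-\lambda s}\|\tilde V\|^p\,{\rm d}\bar y\,{\rm d}s.
\end{align*}
Now $p_N$ stays to the first power in both integrals, so the spatial integral $\int_{\partial D}|x-\bar y|^{2\mu-d}\,{\rm d}\bar y$ is finite for every $\mu>\tfrac12$ by Lemma \ref{integral}, and the time integral $\int_0^t s^{-\mu}e^{\lambda(t-s)/(p-1)}\,{\rm d}s$ produces the factor $e^{\lambda t}\lambda^{-(1-\mu)}$ (hence a negative power of $\lambda$ after raising to $p-1$) for any $\mu<1$. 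No competing constraints arise, and the contraction works for every $p\ge 2$. The extra $p_N$ in the second factor is harmless: after integrating in $t$ and $x$ and using Fubini, one bounds $\int_s^T\!\int_{\partial D}p_N\,{\rm d}x\,{\rm d}t$ by a constant exactly as in \eqref{C_T}.
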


\begin{proof}
In order to prove \eqref{e:0207-2}, let us consider in the space $L^p_W(\left[0,T\right] \times \partial D; \mathcal{H}^{\otimes k})$ the equivalent norm
\begin{align*}
\|f\|_{L^p_{W}}= \mathbb{E}\int_0^Te^{-\lambda t}\|f(t)\|^p_{L^p(\partial D; \mathcal{H}^{\otimes k})} \, {\rm d}t,
\end{align*}
for some constant $\lambda$ to be chosen later. The norm is equivalent to the standard one.

Setting for simplicity $\tilde V(t, x):=(V_n-V_{n-1})(t, x)$ and using H\"older inequality and the assumption of equi-boundedness on $h$,
$\sup h(t,\bar x) \le L < \infty$ $\bP$-a.s.,
we have that 
\begin{align*}
\mathbb{E}\|(V_{n+1}-V_n)(t, x)\|^p_{\mathcal{H}^{\otimes k}}
&=\mathbb{E}\left| \left| \int_r^t\int_{\partial D}p_N(t-s, x,\bar y)h(s,\bar y))(V_n-V_{n-1})(s,\bar y) \, {\rm d}\bar y \, {\rm d}s\right|\right|^p_{\mathcal{H}^{\otimes k}}
\\
&\le L \, \mathbb{E}\left( \int_r^t\int_{\partial D}p_N(t-s, x,\bar y)\|\tilde{V}(s,\bar y)\|_{\mathcal{H}^{\otimes k}} \, {\rm d}\bar y \, {\rm d}s\right)^p
\\
%&=L \, \mathbb{E}\left(\int_0^t \int_{\partial D} \left(p_N(t-s, x,\bar y)^{\left(\frac 1q + \frac 1p\right)}e^{\lambda s \frac{(q-1)}{q}}\right)\left(e^{\frac{-\lambda s}{p}}\|\tilde{V}(s,\bar y)\|_{\mathcal{H}^{\otimes k}}\right) \, {\rm d}\bar y \, {\rm d}s \right)^p
%\\
&\le L \, \left(\int_0^t \int_{\partial D} p_N(t-s, x,\bar y)e^{\lambda s(q-1)} \, {\rm d}\bar y \, {\rm d}s\right)^{\frac pq}\times 
\\
& \qquad\qquad \times\mathbb{E} \left(\int_0^t\int_{\partial D}p_N(t-s, x,\bar y)e^{-\lambda s}\|\tilde{V}(s,\bar y)\|_{\mathcal{H}^{\otimes k}} \, {\rm d}\bar y \, {\rm d}s \right).
\end{align*}
Using estimate \eqref{stima_ok} we obtain the following 
\begin{align*}
\left(\int_0^t\int_{\partial D}p_N(t-s, x,\bar y)e^{\lambda s(q-1)}{\rm d}\bar y{\rm d}s\right)^{\frac pq}&= \left( \int_0^t \int_{\partial D}p_N(s, x, \bar y)e^{\frac{\lambda(t-s)}{p-1}} \, {\rm d }\bar y \, {\rm d}s\right)^{p-1} 
\\
&\le \left(\int_{\partial D}|x-\bar y|^{2 \mu-d}{\rm d}\bar y \int_0^t s^{-\mu}e^{\frac{\lambda (t-s)}{p-1}} \, {\rm d}s \right)^{p-1}
\\
%&\le \left(Ce^{\frac{\lambda t}{p-1}}\int_0^ts^{-\mu}e^{\frac{-\lambda s}{p-1}}{\rm d}s\right)^{p-1}
%\le C \left(\frac{(p-1)}{\lambda}\right)^{\frac{(1-\mu)}{(p-1)}}e^{\lambda t}\left(\int_0^{\infty} z^{-\mu}e^{-z}{\rm d}z\right)^{p-1}\\
&\le %C\left(\frac{p-1}{\lambda}\right)^{\frac{(1-\mu)}{(p-1)}} 
C \frac{1}{\lambda^{(1-\mu)/(p-1)}}
e^{\lambda t}
\end{align*}
where $C = C(p,\mu)$ is uniformly bounded for fixed $\mu$ and $p \ge 2$.
So, by means of the Fubini's Theorem, we get
\begin{align*}
\|V_{n+1}-V_n\|&^p_{L^p_W}= \int_r^T\int_{\partial D}\mathbb{E}\|V_{n+1}-V_n\|^p_{\mathcal{H}^{\otimes k}}\,{\rm d}x \,e^{-\lambda t}\,{\rm d}t \\
&\le C\frac{1}{\lambda^{(1-\mu)/(p-1)}}\int_r^T\int_{\partial D} e^{\lambda t} \int_r^t\int_{\partial D}p_N(t-s, x, \bar y)e^{-\lambda s}\,\mathbb{E}\|\tilde V(s,\bar y)\|^p_{\mathcal{H}^{\otimes k}}\,{\rm d}\bar y\, {\rm d}s \,{\rm d}x \,e^{-\lambda t}\,{\rm d}t\\
&=C\frac{1}{\lambda^{(1-\mu)/(p-1)}}\int_r^T \int_{\partial D}\mathbb{E}\|\tilde{V}(s, \bar y)\|^p_{\mathcal{H}^{\otimes k}}e^{-\lambda s}\left(\int_s^T\int_{\partial D}p_N(t-s, \xi,\bar y)\,{\rm d}x\,{ \rm d}t \right){\rm d}\bar y \, {\rm d}s
\\
&\le C\frac{1}{\lambda^{(1-\mu)/(p-1)}}\int_r^T \int_{\partial D}\mathbb{E}\|\tilde V(s, \bar y)\|^p_{\mathcal{H}^{\otimes k}}e^{-\lambda s}\,{\rm d}\bar y \,{\rm d}s
\\
&\le C \lambda^{\frac{\mu-1}{p-1}}\|V_{n}-V_{n-1}\|^p_{L^p_W},
\end{align*}
where, thanks again to Lemma \ref{integral} (recall that $\frac 12 < \mu < 1$ and so $2 \mu-d < d-1$),
\begin{align}
\label{C_T}
\int_s^T\int_{\partial D}p_N(t-s,\xi, \bar{y}){\rm d}\xi {\rm d}t &\le \int_s^T\int_{\partial D}|\xi-\bar{y}|^{2 \mu -d}(t-s)^{-\mu} {\rm d}\xi{\rm d}t\notag \\ 
&\le \int_0^Tt^{-\mu}{\rm d}t \int_{\partial D}|\xi-\bar{y}|^{2\mu-d} {\rm d}\xi \le C_{\mu}T^{1-\mu} =C_{\mu,T}.
\end{align}

Then we see that there exists $\lambda$ large
 enough such that $C\lambda^{\frac{\mu-1}{p-1}}\le 1-\varepsilon <1$ and this proves the Lemma.%Let us set for simplicity
\end{proof}

%=========================================================================

\vfill

\end{document}